\def\ve{\varepsilon}
\def\al{\alpha}
\def\A{\mathcal{A}}
\def\la{\lambda}
\def\f{\tilde{f}}
\def\d{\mathrm{d}}
\def\O{\mathrm{O}}
\def\Z{\mathcal{Z}}
\newcommand{\dem}{\begin{proof}}
\newcommand{\cqd}{\end{proof}}
\theoremstyle{plain}
\newtheorem{lemma}{\bf Lemma}[section]
\newtheorem{cor}[lemma]{Corollary}
\newtheorem{pro}[lemma]{Proposition}
\newtheorem{maintheorem}{Theorem}
\theoremstyle{definition}
\newtheorem{definicao}[lemma]{Definition}
\journal{Analyse Non Linéaire}
\begin{document}

\begin{frontmatter}



\title{Renormalization for piecewise smooth homeomorphisms on the circle}



\author[kmc]{Kleyber Cunha\corref{cor1}\fnref{kmcc}}
\ead{kleyber@ufba.br}
\ead[url]{www.sd.mat.ufba.br/$\sim$kleyber/}

\author[ds]{Daniel Smania\fnref{dss}}
\ead{smania@icmc.usp.br}
\ead[url]{www.icmc.usp.br/$\sim$smania/}

\cortext[cor1]{Correspondig author}

\address[kmc]{Departamento de Matem\'atica,
Universidade Federal da Bahia, Av. Ademar de Barros s/n,
CEP 40170-110
Salvador-BA, Brazil}

\address[ds]{Departamento de Matem\'atica,
ICMC/USP-Campus de S\~ao Carlos, Caixa Postal 668,  CEP 13560-970, S\~ao Carlos, SP, Brazil}

\fntext[kmcc]{K.C. was partially supported by FAPESP 07/01045-7. }

\fntext[dss]{D.S. was partially supported by
FAPESP 2008/02841-4 and 2010/08654-1, CNPq 310964/2006-7 and 303669/2009-8.}\begin{abstract}
In this work we study the renormalization operator acting on piecewise smooth homeomorphisms on the circle, that turns out to be essentially the study of Rauzy-Veech renormalizations of generalized interval exchanges maps  with genus one. In particular we show that renormalizations of such maps with  zero mean nonlinearity and satisfying certain smoothness and combinatorial assumptions converges to the set of  piecewise affine interval exchange maps.
\end{abstract}


\begin{keyword}
renormalization \sep interval exchange transformations \sep Rauzy-Veech induction \sep universality \sep homeomorphism on the circle \sep convergence
\MSC 37E10 \sep 37E05 \sep 37E20\sep 37C05\sep 37B10

\end{keyword}

\end{frontmatter}


\section{Introduction and results}
\label{}
One of the most studied classe of dynamical systems are  orientation-preserving diffeomorphisms of the
circle.  It may be classified according to their rotation number $\rho(f)$ which, roughly speaking, measures the average rate of rotation of orbits around the circle. When $\rho(f)\in\mathbb{Q}$ then $f$ has a periodic point and all other orbits will converge to some periodic orbit both in the future and in the past. If $\rho(f)$ is irrational then $f$ has not periodic point and its dynamics depends on the smoothness of $f.$ Denjoy result ensures that if $f$ is $C^2$ then it is conjugate to the rigid rotation of angle $\rho(f).$ In this context, it is a natural question to ask  under what conditions the conjugacy is smooth. Several authors, Herman \cite{herman}, Yoccoz \cite{yoccoz}, Khanin and Sinai \cite{newproof, khanin1}, Katznelson and Ornstein \cite{katz}, have shown that if $f$ is $C^3$ or $C^{2+\nu}$ and $\rho(f)$ satisfies certain diophantine condition  then the conjugacy will be at least $C^1$.

A natural generalization of diffeomorphisms of the circle are diffeomorphisms with breaks, i.e., $f$ has  jumps in the first derivative  on finitely many points.  In this setting Khanin and Vul \cite{khanin} show that for diffeomorphisms with one break the renormalization operator converges to a two dimensional space of the fractional linear transformation. Our first main result generalizes  results of Khanin and Vul \cite{khanin} for finitely many break points. A key combinatorial method in our proof is to consider a piecewise smooth homeomorphism on the circle as a generalized interval exchange transformation. 

Let $I$ be an interval  and let  $\A$ be a finite set (the {\it  alphabet}) with $d~\geq~2$ elements and $\mathcal{P} =\{I_\al: \al\in\A\}$ be an $\mathcal{A}-${\it indexed} partition of $I$ into subintervals \footnote{All the subintervals will be bounded, close on the left and open on the right.}. We say that the triple $(f,\mathcal{A}, \mathcal{P})$, where  $f:I\to I$ is a bijection, is a generalized interval exchange transformation with $d$ intervals  (g.i.e.m. with $d$ intervals,  for short), if $f|_{I_\al}$ is an orientation-preserving homeomorphism for each $\alpha \in \mathcal{A}$. Most of the time we will abuse the notation saying that $f$ is a g.i.e.m. with $d$ intervals. The order of the subintervals in the domain and image constitue the combinatorial data for $f,$ where explicitly defined in the next section.

When $f|_{I_\al}$ is a translation we say which $f$ is a standard i.e.m.
 Standard i.e.m. arise naturally as Poincaré return maps of measured foliations and geodesic flows on translation surfaces. But they are also interesting examples of simple dynamical systems with very rich dynamics and have been extensively studied for their own sake. When $d=2,$ by identifying the endpoints of $I,$ standard i.e.m. correspond to rotations of the circle and generalized i.e.m. correspond to circle homeomorphisms.

 In another article, Khanin and Sinai \cite{newproof} show a new proof of M. Herman's theorem. From the viewpoint of the renormalization group approach they show the  convergence of the renormalizations of a circle diffeomorphism to the linear fixed point of the renormalization operator  for  diffeomorphisms of the circle.  We use a similar approach to study  generalized interval exchange maps of genus one.



\subsection{Renormalization: Rauzy-Veech induction} \label{renrv} To describe the combinatorial assumptions of our results, we need to introduce the Rauzy-Veech scheme. This is a renormalization scheme. Renormalization group techniques are a very powerful tool in one-dimensional dynamics. For example see Khanin and Vul \cite{khanin} for circle homeomorphisms and de Melo and van Strien \cite{melo} for unimodal maps.

Following the algorithm of Rauzy \cite{rauzy} and Veech \cite{veech}, for every i.e.m. $f$ without connections, we define the Rauzy-Veech induction by considering the first return maps $f_n$ of $f$ on a decreasing sequence of intervals $I^n,$ with the same left endpoint than $I.$ The map $f_n$ is again generalized i.e.m. with the same alphabet $\A$ but the combinatorial data may be different.

 Given two intervals $J$ and $U$, we will write $J < U$ if their interior are disjoint and $x < y$ for every $x \in J$ and $y \in U$. This defines a partial order in the set of all intervals. Denote the lenght of an interval $J$ by $|J|$.

  Given a g.i.e.m. $f\colon I^0\rightarrow I^0$, with alphabet $\mathcal{A}$. Let $\pi_j\colon \mathcal{A}\rightarrow \{1,\dots,d\}$, with $j=0,1$, be bijections  such that

 $$I_{\alpha}< I_{\beta}$$
 iff $\pi_0(\alpha) < \pi_0(\beta)$ and
 $$f(I_{\alpha}) < f(I_{\beta})$$
 iff $\pi_1(\alpha) < \pi_1(\beta)$.

 The pair $\pi=\pi(f)=(\pi_0,\pi_1)$ is called the combinatorial data associated to the g.i.e.m. $f$. We call
 \begin{eqnarray}
 \label{mono}
 p=\pi_1\circ \pi_0^{-1}:\{1,\ldots,d\}\to \{1,\ldots,d\}
 \end{eqnarray}
 the {\it monodromy invariant} of the pair $\pi=(\pi_0,\pi_1).$ When appropriate we will use the notation $p=(p(1)\ p(2)\ ... \ p(d))$ for the data combinatorial of $f.$ We also assume that the pair $\pi=(\pi_0,\pi_1)$ is irreducible, i.e., for all $j\in\{1,...,d-1\}$ we have $\pi_1\circ\pi_0^{-1}(\{1,...,j\})\neq \{1,...,j\}.$

 For each $\ve\in\{0,1\}$, define $\al(\ve)=\pi^{-1}_{\ve}(d)$. If $|I_{\alpha(0)}| \neq |f(I_{\alpha(1)})|$ we say that $f$ is Rauzy-Veech renormalizable (or simply renormalizable, from now on).  If $|I_{\alpha(0)}| > |f(I_{\alpha(1)})|$ we say that the letter $\alpha(0)$ is the winner and the letter  $\alpha(1)$ is the loser. We say that $f$ is   type $0$ renormalizable and we can define a map $\hat{R}(f)$ as the first return map of $f$  to the interval
 $$I^1= I \setminus f(I_{\alpha(1)}).$$ Otherwise $|I_{\alpha(0)}| < |f(I_{\alpha(1)})|$,  the  letter $\alpha(1)$ is the winner and the letter  $\alpha(0)$ is the loser, we say that $f$ is type $1$ renormalizable and we can define a map $\hat{R}(f)$ as the first return map of $f$  to the interval
 $$I^1= I \setminus I_{\alpha(0)}.$$

 We want to see $R(f)$ as a g.i.e.m. To this end we need to associate to this map a $\mathcal{A}$-indexed partition of its domain. We do this in the following way. The  subintervals of the $\mathcal{A}-$ partition $\mathcal{P}^1$ of $I^1$ are  denoted by $I^1_{\al}.$ If  $f$ has type 0, $I^1_{\al}=I_{\al}$. If $\al\not=\al(0)$, $I^1_{\al(0)}=I_{\al(0)}\setminus f(I_{\al(1)})$ and when $f$ has type 1, $I^1_{\al}=I_{\al}$ if $\al\not=\al(1),\al(0)$, $I^1_{\al(1)}=f^{-1}(f(I_{\al(1)})\setminus I_{\al(0)})$ and $I^1_{\al(0)}=I_{\al(1)}\setminus I^1_{\al(1)}.$ It is easy to see that in both cases (type 0 and 1) we have

$$
R(f)(x)=\left\{
\begin{array}{ll}
f^2(x),& \mbox{~se~} x\in I^1_{\al(1-\ve)},\\
f(x), & \mbox{~otherwise}.
\end{array}
\right.
$$
and $(R(f), \mathcal{A}, \mathcal{P}^1)$ is a g.i.e.m., called the {\bf Rauzy-Veech renormalization} (or simply renormalization, for short)  of $f$.  If $f$ is renormalizable with type $\ve \in \{0,1\}$ then the combinatorial data $\pi^1=(\pi^1_0,\pi^1_1)$ of $R(f)$ is given by

$$
\pi^1_{\ve}:=\pi_{\ve}$$
and
$$\mbox{~and~} \pi^1_{1-\ve}(\al)=\left\{
\begin{array}{ll}
\pi_{1-\ve}(\al),& \mbox{~se~}\pi_{1-\ve}(\al)\leq\pi_{1-\ve}(\al(\ve)),\\
\pi_{1-\ve}(\al)+1,& \mbox{~se~}\pi_{1-\ve}(\al(\ve))<\pi_{1-\ve}(\al)<d,\\
\pi_{1-\ve}(\al(\ve))+1,& \mbox{~se~}\pi_{1-\ve}(\al)=d.
\end{array}
\right.
$$
Since $\pi^1$ depends only on $\pi$ and the type $\ve$, we denote  $r_\ve(\pi)=\pi^1$.

A g.ie.m. is infinitely renormalizable if $R^n(f)$ is well defined, for every
$n \in \mathbb{N}$. For every interval of the form $J=[a,b)$ we denote $\partial J=\{a\}.$ We say that a g.i.e.m $f$ has no connection if
$$f^m(\partial I_\alpha)\neq \partial I_\beta \text{ for all } m\geq1,\; \alpha,\beta\in\mathcal{A} \text{ with } \pi_0(\beta)\neq1.$$
This property is invariant under iteration of $R.$ Keane \cite{keane} show that no connection condition is a necessary condition for $f$ to be infinitely renormalizable.

 Let $\ve_n$ the type of the $n-$th renormalization,  $\alpha_n(\ve_n)$ be the winner and $\alpha_n(1-\ve_n)$ be the loser of the $n-$th renormalization.

 We say that infinitely renormalizable g.i.e.m. $f$ has {\bf  $k-$bounded combinatorics}
if for each $n$ and $\beta, \gamma  \in \mathcal{A}$ there exists $n_1, p \geq 0$, with $|n-n_1|< k$ and $|n-n_1-p|< k$, such that
$\alpha_{n_1}(\ve_{n_1})=\beta$, $\alpha_{n_1+p}(1-\ve_{n_1+p})=\gamma$ and $$\alpha_{n_1+i}(1-\ve_{n_1+p})= \alpha_{n_1+i+1}(\ve_{n_1+i})$$
for every $0\leq i < p$.

We say that a g.i.e.m. $f\colon I \rightarrow I$  has {\bf genus one} by Veech \cite{veech1}  (or belongs to the {\bf rotation class} by Nogueira and Rudolph \cite{nogueira})  if $f$ has at most two discontinuities. Note that  every g.i.e.m. with either two or three intervals has genus one. If $f$ is renormalizable and has genus one, it is easy to see that $R(f)$ has genus one.

Given two infinitely renormalizable g.i.e.m. $f$ and $g$, defined with the same alphabet $\mathcal{A}$, we say that $f$ and $g$ have the {\bf same combinatorics} if $\pi(f)=\pi(g)$ and the the $n-$th renormalization of $f$ and $g$ have the same type, for every $n \in \mathbb{N}$. It follows that if $\pi^n(f)=\pi^n(g)$ for every $n$, where $\pi^n(f)$ is the combinatorial data of the $n-$th renormalization of $f$.

\begin{definicao} Let $\mathcal{B}_k^{2+\nu}$, $k \in \mathbb{N}$ and  $\nu > 0$,  be the set of g.i.e.m.  $f:I\to I$ such that
\begin{itemize}
\item[(i)]  For each $\alpha \in \mathcal{A}$ we can extend $f$ to $\overline{I_\al}$ as an orientation-preserving diffeomorphism of class $C^{2+\nu}$;\\
\item[(ii)] the g.i.e.m. $f$  has  $k-$bounded combinatorics;\\
\item[(iii)] The map   $f$ has  genus one and has no connection;
\end{itemize}
\end{definicao}


Let $H$ be  a non-degenerate interval, let  $g:H\to\mathbb{R}$ be a diffeomorphism  and let $J\subset H$ be an interval. We define the {\it Zoom} of $g$ in $H,$  denoted by $\Z_H(g),$ the transformation $\Z_H(g)=A_1\circ g\circ A_2,$ where $A_1$ and $A_2$ are orientation-preserving affine maps, which send $[0,1]$ into $H$ and $g(H)$ into $[0,1]$ respectively. Consider the set $C^2([0,1],\mathbb{R})$  of all $C^2$ functions $g\colon [0,1]\rightarrow \mathbb{R}$ with the usual norm
$$d_{C^2}(f,g):= \sum_{i=0}^2 \sup_{x \in [0,1]} |D^{(i)}f(x)-D^{(i)}g(x)|,$$
where $D^{(i)}f$ and $D^{(i)}g$ denote the i-th derivative of $f$ and $g$ respectively.

Denote by $\mathcal{M}$ the set of M\"oebius transfomations $M:[0,1]\to[0,1]$ such that $M(0)=0$ and $M(1)=1.$
Note that $\mathcal{M}$ is an one-dimensional real Lie group. Indeed any element $M \in \mathcal{M}$ has the form

\begin{eqnarray}
\label{defMoebius}
M=M_{N}(x)=\dfrac{xe^{\frac{-N}{2}}}{1+x(e^{\frac{-N}{2}}-1)}.
\end{eqnarray}
for some $N \in \mathbb{R}$ and  $M_{N_1}\circ M_{N_2}=M_{N_1+N_2}$.
Moreover $M_{N}$ is the unique M\"oebius transformation $M$ which sends $[0,1]$ onto $[0,1],$ $M(0)=0,$ $M(1)=1,$ and
$$\int_0^1 \frac{D^2M(x)}{DM(x)} \ dx= N.$$

\subsection{Main results}

\begin{maintheorem}
\label{teo1}
Let $f\in\mathcal{B}_k^{2+\nu}.$ Then there are $C=C(f)>0$ and  $0<\la=\la(k)<1$ such that
$$\mathrm{d}_{C^2}(\mathcal{Z}_{I^n_{\al}}(R^n(f)),M_{N^n_\alpha})\leq C \la^n$$
for all $\al\in\A.$ Here
$$N^n_\alpha = \int_{I^n_\alpha} \frac{D^2R^n(f)(x)}{DR^n(f)(x)} \ dx.$$
In particular
$$\mathrm{d}_{C^2}(\mathcal{Z}_{I^n_{\al}}(R^n(f)),\mathcal{M})\leq C \la^n.$$
\end{maintheorem}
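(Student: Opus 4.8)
The plan is to control the distortion of the iterated return maps $R^n(f)$ on each subinterval $I^n_\alpha$ by comparing them to the Möbius group $\mathcal{M}$, which is precisely the set of maps with constant Schwarzian-type quantity $D^2M/DM$ (equivalently, $M$ minimizes the nonlinearity functional with the prescribed integral $N$). Since $R^n(f)$ restricted to $I^n_\alpha$ is, by the formula for the renormalization, a composition of at most two branches of $f$ (namely $f$ or $f^2$), and more generally after $n$ steps it unwinds into a composition of many branches of the original $f$ along an orbit segment, the first step is to write $\mathcal{Z}_{I^n_\alpha}(R^n(f))$ explicitly as a normalized composition $A_1\circ f^{m_1}\circ\cdots\circ f^{m_j}\circ A_2$ of diffeomorphic branches of $f$, where the combinatorial length and the geometry of the intermediate intervals are governed by the Rauzy-Veech dynamics. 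The $k$-bounded combinatorics hypothesis (ii) is what guarantees that every letter wins within a bounded window, which in turn yields a \emph{real a priori bound}: the intervals $I^n_\alpha$ shrink at a uniform geometric rate and have uniformly bounded geometry (bounded ratios between adjacent pieces), so no intermediate interval in the composition is disproportionately long. This is the analogue of Khanin--Sinai's bounded geometry and should be established (or cited from the preceding development) as the key preliminary estimate.

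Next I would reduce the $C^2$-convergence to a statement about nonlinearities. Introduce the nonlinearity $\eta_g = D^2g/Dg = D\log Dg$ of a diffeomorphism $g$; it is additive under composition after the chain rule, $\eta_{g\circ h}(x) = \eta_h(x) + \eta_g(h(x))\, Dh(x)$, and it is affinely invariant in the appropriate sense, so that $\eta$ of the zoom $\mathcal{Z}_{I^n_\alpha}(R^n(f))$ is obtained by pushing forward and summing the nonlinearities of the constituent branches of $f$ over the corresponding orbit segment, rescaled to $[0,1]$. Because each branch of $f$ is $C^{2+\nu}$, its nonlinearity is $\nu$-Hölder, and after rescaling an interval of length $\ell$ to unit size the Hölder seminorm of the pushed-forward nonlinearity picks up a factor $\ell^\nu$. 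Summing over the $j\asymp n$ intervals in the composition, and using the uniform geometric decay $|I^n_\alpha|\lesssim \mu^n$ together with bounded geometry, the Hölder parts of all contributions sum to something of order $\sum \ell_i^\nu \lesssim \mu^{\nu n}\cdot(\text{bounded number of terms per scale})$, hence $O(\la^n)$ for a suitable $\la\in(\mu^\nu,1)$ depending on $k$. This shows that $\eta$ of the zoomed map is within $C\la^n$ in the sup norm of a \emph{constant} — and a diffeomorphism of $[0,1]$ fixing the endpoints with constant nonlinearity $N$ is exactly $M_N$. Tracking this comparison through two integrations (from $\eta$ to $\log Dg$ to $g$), and using that the average of $\eta$ over $I^n_\alpha$ is by definition $N^n_\alpha$, upgrades the nonlinearity estimate to the claimed $d_{C^2}$ bound against $M_{N^n_\alpha}$; the final assertion $d_{C^2}(\mathcal{Z}_{I^n_\alpha}(R^n(f)),\mathcal{M})\le C\la^n$ is then immediate since $M_{N^n_\alpha}\in\mathcal{M}$.

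The main obstacle is the uniform control of the geometry of the deep return intervals — proving that, under $k$-bounded combinatorics and genus one, the lengths $|I^n_\alpha|$ decay geometrically with a rate depending only on $k$ and that adjacent intervals (and more importantly the intervals appearing inside the combinatorial expansion of $R^n(f)|_{I^n_\alpha}$) have comparable sizes. Without distortion control this is circular, so the standard device is a bootstrap: first obtain a \emph{weak} bound (e.g.\ that compositions have uniformly bounded distortion, via the Koebe principle or a Denjoy-type sum $\sum \eta\cdot|\text{interval}|$ argument using only that $\eta$ is bounded), then use bounded distortion to transfer the combinatorial contraction coming from $k$-boundedness into genuine metric contraction with a uniform rate, and finally feed the metric bounds back to get the sharp $\nu$-Hölder summation above. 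A secondary technical point is handling the $f^2$-branches: near such a branch one of the two factors is a branch of $f$ evaluated on a small interval near a former discontinuity, and one must check — using the genus one hypothesis, which limits the number of discontinuities to two — that these contributions do not accumulate. Once bounded geometry is in hand, the rest is the routine nonlinearity bookkeeping sketched above.
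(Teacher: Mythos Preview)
There is a genuine gap. Your central step asserts that ``a diffeomorphism of $[0,1]$ fixing the endpoints with constant nonlinearity $N$ is exactly $M_N$'', and hence that it suffices to show the nonlinearity $\eta=D^2g/Dg$ of the zoomed return map is close to a constant. This is false: a M\"obius map $M(x)=(ax+b)/(cx+d)$ has $n_M(x)=-2c/(cx+d)$, which is \emph{not} constant unless $c=0$. The map with constant nonlinearity equal to $N$ is $f_N(x)=(e^{Nx}-1)/(e^N-1)$, which is not M\"obius. More to the point, in Theorem~1 there is no hypothesis on $\int D^2f/Df$, and (as Theorem~2 shows) $N^n_\alpha$ is typically bounded \emph{away} from zero; so the target $M_{N^n_\alpha}$ has genuinely non-constant nonlinearity, and the nonlinearity of $\mathcal Z_{I^n_\alpha}(R^n f)$ cannot be close to a constant. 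Even if you attempt to repair this by replacing each factor's nonlinearity by its average $c_j$, the chain rule gives $n_{\tilde f_1^n}(x)=\sum_j n_{\tilde f_j}(\tilde f_1^{j-1}(x))\,D\tilde f_1^{j-1}(x)$, and the derivative weights $D\tilde f_1^{j-1}(x)$ are uniformly bounded but not close to $1$; the resulting sum $\sum_j c_j\,D\tilde f_1^{j-1}(x)$ has real $x$-dependence and is not close to any constant.

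The paper's route is different and avoids this obstruction. One writes $\mathcal Z_{I^n_\alpha}(R^n f)$ as the composition of $q^n_\alpha$ zoomed branches $\tilde f_i=\mathcal Z_{f^{i-1}(I^n_\alpha)}(f)$ and compares \emph{each factor} to a M\"obius map $M_i=M_{N_{\tilde f_i}}$, with individual error $O(\delta_i^{1+\nu})$ (Proposition~2.1; note the exponent is $1+\nu$, not $\nu$). A telescoping estimate (Proposition~2.3) then controls $\tilde f_1^n-M_1^n$ in $C^2$ by $C\,(\max_i\delta_i)^\nu$, using only $\sum_i\delta_i\le 1$ (the intervals $f^i(I^n_\alpha)$ form a partition) and uniform $C^2$ bounds on partial compositions. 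The group property $M_a\circ M_b=M_{a+b}$ gives $M_1^n=M_{N^n_\alpha}$. Finally, bounded geometry (Propositions~3.6--3.7) yields $\max_i\delta_i\le|\mathcal P^n|\le\lambda^{n/k}$. Two further remarks on your sketch: the number of factors is $q^n_\alpha$, which is exponential in $n$, not $\asymp n$; and the sum you want to control is $\sum_i\delta_i^{1+\nu}\le(\max_i\delta_i)^\nu\sum_i\delta_i$, not $\sum_i\delta_i^{\nu}$, which would diverge.
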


We can say more about the mean nonlinearities  $N^n_\alpha$. Denote by $q^n_\alpha\in\mathbb{N}$ the first return time of the interval $I^n_\alpha$ to the interval $I^n,$ i.e., $\hat{R}^n(f)|_{I^n_\alpha}=f^{q^n_\alpha}$, for some $q^n_\alpha \in \mathbb{N}$.

\begin{maintheorem}
\label{teon4}\label{principal} Let $f\in\mathcal{B}_k^{2+\nu}.$ Then there are $C=C(f)>0$ and  $0<\la=\la(k)<1$ such that
\begin{equation} \label{rat3} |N^n_\alpha - \frac{\sum_{i=1}^{q^n_\alpha} |f^i(I^n_\alpha)|}{|I|} \int \frac{D^2f(x)}{Df(x)} dx|\leq C\lambda_{\ }^{\sqrt{n}}.\end{equation}
In particular if
$$\int_{[0,1]} \frac{D^2f(x)}{Df(x)} dx=0$$
then $|N^n_\alpha|< C\lambda_{\ }^{\sqrt{n}}$.
\end{maintheorem}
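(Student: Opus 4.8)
The plan is to compare the nonlinearity $N^n_\alpha$ of $R^n(f)$ restricted to $I^n_\alpha$ with a sum of nonlinearities of $f$ along the orbit of $I^n_\alpha$, and then to show that this sum is essentially a Riemann-sum approximation of the integral $\int D^2f/Df$ weighted by the relative lengths $|f^i(I^n_\alpha)|/|I|$. The starting point is the chain rule for nonlinearity: since $\hat R^n(f)|_{I^n_\alpha}=f^{q^n_\alpha}$, writing $\eta g := D^2g/Dg = D(\log Dg)$ one has the additive cocycle identity
$$
\int_{I^n_\alpha}\eta\big(f^{q^n_\alpha}\big)(x)\,dx=\sum_{i=0}^{q^n_\alpha-1}\int_{f^i(I^n_\alpha)}\eta f(y)\,dy .
$$
Here the left-hand side is not quite $N^n_\alpha$, because $N^n_\alpha$ is the nonlinearity of $R^n(f)$, not of $\hat R^n(f)$, computed on $I^n_\alpha$; but on $I^n_\alpha$ the maps $R^n(f)$ and $\hat R^n(f)$ differ only by which power of $f$ is applied (one extra application of $f$ on the winner piece), so after bounding the extra term one reduces to controlling $\sum_{i}\int_{f^i(I^n_\alpha)}\eta f$. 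Actually, by the definition of $q^n_\alpha$ as the return time to $I^n$ and the structure of Rauzy--Veech towers, the intervals $f^i(I^n_\alpha)$, $0\le i< q^n_\alpha$, are pairwise disjoint and their union is exactly the Rohlin tower over $I^n_\alpha$; taking the union over $\alpha$ reconstitutes $I$ (up to a set of small measure). This is where the hypothesis of genus one and $k$-bounded combinatorics will be used, to guarantee that the heights $q^n_\alpha$ are comparable and the geometry of the towers is controlled.

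The second step is to replace each $\int_{f^i(I^n_\alpha)}\eta f(y)\,dy$ by $|f^i(I^n_\alpha)|\cdot \overline{\eta f}$, where $\overline{\eta f}=\frac1{|I|}\int_I \eta f$. The error in each such replacement is at most $|f^i(I^n_\alpha)|\cdot \mathrm{osc}_{f^i(I^n_\alpha)}(\eta f)\le |f^i(I^n_\alpha)|\cdot \|D\eta f\|_\infty\cdot |f^i(I^n_\alpha)|$, since $f\in C^{2+\nu}$ makes $\eta f$ Lipschitz. Summing over $i$ from $0$ to $q^n_\alpha-1$ gives a total error bounded by $\|D\eta f\|_\infty \sum_i |f^i(I^n_\alpha)|^2 \le \|D\eta f\|_\infty \big(\max_i |f^i(I^n_\alpha)|\big)\sum_i |f^i(I^n_\alpha)| \le \|D\eta f\|_\infty \big(\max_i |f^i(I^n_\alpha)|\big)|I|$. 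So the whole estimate hinges on showing that $\max_{0\le i<q^n_\alpha}|f^i(I^n_\alpha)|$ decays like $\lambda^{\sqrt n}$. This is precisely the kind of statement that is \emph{not} true at speed $\lambda^n$ in general — a tower of height $q^n_\alpha$ built over an interval of length $\sim \lambda^n$ can have individual floors of length up to $\sim q^n_\alpha \lambda^n$, and $q^n_\alpha$ can grow — which is exactly why the exponent in \eqref{rat3} is $\sqrt n$ rather than $n$: one splits the first $\sqrt n$ renormalization levels to get exponential smallness $\lambda_0^{\sqrt n}$ of $|I^m|$ with $m=\lfloor\sqrt n\rfloor$, and then uses the remaining $n-\sqrt n$ levels together with bounded distortion (Denjoy-type control, available from $f\in C^{2+\nu}$ and the no-connection, genus-one, $k$-bounded hypotheses) to bound the floor lengths of the level-$n$ tower in terms of $|I^m|$. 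The interplay of these two scales produces the $\sqrt n$ rate.

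I expect the main obstacle to be precisely this uniform control of $\max_{0\le i<q^n_\alpha}|f^i(I^n_\alpha)|$. One needs: (a) an a priori bound $|I^{m}|\le C\lambda_0^{m}$ for the nested intervals, which should follow from the $k$-bounded combinatorics ensuring that every $k$ steps all lengths contract definitely (this is a standard consequence of Rauzy--Veech combinatorics once one has a real-bounds / bounded-geometry statement); (b) bounded distortion for the iterates $f,\dots,f^{q^n_\alpha}$ restricted to the tower over $I^n_\alpha$, so that each floor $f^i(I^n_\alpha)$ has length comparable to $|I^n_\alpha|\cdot|Df^i|$ with a distortion constant uniform in $n$ — this is where Theorem \ref{teo1} is genuinely useful, since it tells us $R^n(f)$ is $C^2$-close to a Möbius map and in particular has uniformly bounded nonlinearity, which feeds back into controlling distortion of the return maps; and (c) combining (a) and (b) at the matched scale $m\sim\sqrt n$. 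Once $\max_i|f^i(I^n_\alpha)|\le C\lambda^{\sqrt n}$ is established, the two error terms above are each $\le C\lambda^{\sqrt n}$, and since $|N^n_\alpha - \sum_i|f^i(I^n_\alpha)|\,\overline{\eta f}| \le C\lambda^{\sqrt n}$ while $\sum_{i=1}^{q^n_\alpha}|f^i(I^n_\alpha)|\,\overline{\eta f}$ is exactly the claimed main term $\frac{\sum_{i=1}^{q^n_\alpha}|f^i(I^n_\alpha)|}{|I|}\int D^2f/Df$ (modulo the harmless shift of index $0\leftrightarrow q^n_\alpha$, which is absorbed into the error since those two extreme floors also have length $\le C\lambda^{\sqrt n}$), inequality \eqref{rat3} follows. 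The final assertion, that $\int_{[0,1]}D^2f/Df=0$ implies $|N^n_\alpha|\le C\lambda^{\sqrt n}$, is then immediate.
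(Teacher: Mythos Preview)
Your argument has a genuine gap at the second step. You claim that replacing each $\int_{f^i(I^n_\alpha)}\eta f$ by $|f^i(I^n_\alpha)|\cdot\overline{\eta f}$ incurs an error at most $|f^i(I^n_\alpha)|\cdot\mathrm{osc}_{f^i(I^n_\alpha)}(\eta f)$. That bound would be correct if $\overline{\eta f}$ were the \emph{local} average of $\eta f$ over $f^i(I^n_\alpha)$, but you defined it as the \emph{global} average $\frac{1}{|I|}\int_I\eta f$. By the mean value theorem $\int_{f^i(I^n_\alpha)}\eta f=\eta f(\xi_i)\,|f^i(I^n_\alpha)|$ for some $\xi_i\in f^i(I^n_\alpha)$, so the true error per term is $|f^i(I^n_\alpha)|\cdot|\eta f(\xi_i)-\overline{\eta f}|$, and the factor $|\eta f(\xi_i)-\overline{\eta f}|$ is in general of order one, not $O(|f^i(I^n_\alpha)|)$. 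Summed over $i$ this gives a quantity of order $\sum_i|f^i(I^n_\alpha)|$, which does not go to zero. What is actually needed is that the tower floors $f^i(I^n_\alpha)$ are \emph{equidistributed} in $I$ in a quantitative sense, so that the weighted sum $\sum_i|f^i(I^n_\alpha)|\,\eta f(\xi_i)$ approximates $\big(\sum_i|f^i(I^n_\alpha)|\big)\overline{\eta f}$; this cannot be obtained from the local oscillation of $\eta f$ alone.

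The paper supplies exactly this missing equidistribution input. It groups the floors $f^i(I^n_\alpha)$ according to which atom $f^j(I^r_\beta)$ of the coarser partition $\tilde{\mathcal P}^r$, $r=[n/2]$, contains them; within each atom $\eta f$ is nearly constant (this part is a genuine Riemann-sum estimate, with error $O(\lambda^{r\nu/k})$ from Proposition~\ref{pvz} and the H\"older condition on $\eta f$). What remains is controlled by the conditional measures $\ell\big((\alpha,\star,n)\mid[f^j(I^r_\beta)]\big)$, and the key step is Proposition~\ref{emc}, a Markov ergodic theorem built on the symbolic coding of Section~\ref{synsec}, which shows these conditionals are $O(\lambda_4^{\sqrt{n}})$-close to the unconditional $\ell((\alpha,\star,n))$. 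The $\sqrt n$ rate therefore comes from balancing the almost-Markov error of Lemma~\ref{markov} against the contraction rate of the Markov transition operator, \emph{not} from a two-scale bound on $\max_i|f^i(I^n_\alpha)|$ as you suggest; in fact Proposition~\ref{pvz} already gives $\max_i|f^i(I^n_\alpha)|\le C\lambda^{n/k}$, which is stronger than $\lambda^{\sqrt n}$ and plays no role in producing the $\sqrt n$ exponent.
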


The rate of convergence obtained in (\ref{rat3}) is enough for our purposes.    In  the case of circle diffeomorphisms  Khanin and Teplinsky \cite{rev} obtained an exponential rate using a different  approach.


Our third result is an almost direct consequence of Theorem 1 and 2.

\begin{maintheorem}
\label{teo2}
Let $f\in\mathcal{B}_k^{2+\nu}$ such that
$$\int_{[0,1]} \frac{D^2f(x)}{Df(x)} dx=0.$$
Then there are $C=C(k)>0$ and $0<\la=\la(k)<1$ such that
$$ \left|\mathcal{Z}_{I^n_{\al}}(R^n(f))-\mathrm{Id}\right|_{C^2}\leq C\cdot \la^{\sqrt{n}}  \mbox{~~for all~~} \al\in\A.$$
\end{maintheorem}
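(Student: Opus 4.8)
The plan is to deduce Theorem \ref{teo2} directly from Theorems \ref{teo1} and \ref{teon4}, the only genuinely new ingredient being the elementary fact that the one–parameter family $N\mapsto M_N$ of (\ref{defMoebius}) passes through the identity at $N=0$ in a $C^2$-Lipschitz way. Concretely: by Theorem \ref{teon4}, the hypothesis $\int_{[0,1]} D^2f/Df\,\d x=0$ forces $|N^n_\alpha|\le C(f)\la^{\sqrt{n}}$ for every $n$ and every $\alpha$, so in particular $N^n_\alpha\to 0$; by Theorem \ref{teo1}, $\d_{C^2}\bigl(\Z_{I^n_\alpha}(R^n(f)),M_{N^n_\alpha}\bigr)\le C(f)\la^n$. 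It therefore suffices to bound $\d_{C^2}(M_N,\mathrm{Id})$ in terms of $|N|$ for $N$ near $0$, and then combine the two estimates by the triangle inequality.

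For the bound on $M_N$ I would use the explicit formula (\ref{defMoebius}): $M_0=\mathrm{Id}$, and a direct computation of $M_N(x)$, $DM_N(x)$ and $D^2M_N(x)$ shows that, for $N$ in a fixed bounded interval, each of these three functions is Lipschitz in $N$ uniformly in $x\in[0,1]$; hence there is an absolute constant $C_0$ with $|M_N-\mathrm{Id}|_{C^2}\le C_0|N|$ whenever, say, $|N|\le 1$. Feeding in the previous estimates, for all $n$ large enough that $|N^n_\alpha|\le 1$,
\begin{align*}
\bigl|\Z_{I^n_\alpha}(R^n(f))-\mathrm{Id}\bigr|_{C^2}
&\le \d_{C^2}\bigl(\Z_{I^n_\alpha}(R^n(f)),M_{N^n_\alpha}\bigr)+\bigl|M_{N^n_\alpha}-\mathrm{Id}\bigr|_{C^2}\\
&\le C(f)\la^n+C_0\,|N^n_\alpha|\le C(f)\la^n+C_0C(f)\la^{\sqrt{n}}.
\end{align*}
Since $0<\la<1$ and $n\ge\sqrt{n}$ we have $\la^n\le\la^{\sqrt{n}}$, so the right–hand side is $\le C'(f)\la^{\sqrt{n}}$; enlarging the constant to absorb the finitely many remaining values of $n$ (for which $\la^{\sqrt{n}}$ is bounded below by a positive constant) yields the inequality for all $n$.

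The one point requiring care — and what I would regard as the main obstacle, if anything is — is that Theorems \ref{teo1} and \ref{teon4} a priori supply constants depending on $f$, whereas Theorem \ref{teo2} asserts $C=C(k)$. To upgrade the dependence I would use a loss–of–memory step: by what was just proved there is $n_0=n_0(f)$ with $\bigl|\Z_{I^{n_0}_\alpha}(R^{n_0}(f))-\mathrm{Id}\bigr|_{C^2}$ below a threshold depending only on $k$, for every $\alpha$; then $g:=R^{n_0}(f)$ lies in a $k$-controlled neighbourhood of the piecewise affine i.e.m.\ with the corresponding combinatorics, it satisfies the same hypotheses with the same $k$, and its mean nonlinearities $\sum_\alpha N^{n_0}_\alpha$ are uniformly small. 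Re-running the estimates of Theorems \ref{teo1}–\ref{teon4} starting from $g$, and re-indexing $n\mapsto n+n_0$ while absorbing the initial block into the constant, then produces a bound whose constant no longer sees $f$, giving $C=C(k)$. Apart from this bookkeeping, Theorem \ref{teo2} is immediate from Theorems \ref{teo1} and \ref{teon4}.
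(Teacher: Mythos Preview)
Your argument is essentially the paper's own proof: combine Theorem~\ref{teo1} with Theorem~\ref{teon4} via the triangle inequality, using the elementary Lipschitz bound $|M_N-\mathrm{Id}|_{C^2}\le C_0|N|$ (which the paper records as Lemma~\ref{M}). The paper's proof is no more than this.

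Your extra paragraph on upgrading $C(f)$ to $C(k)$ is a genuine observation the paper glosses over: the statement of Theorem~\ref{teo2} asserts $C=C(k)$, yet the paper's proof only produces a constant of the form $C(f)$ (it multiplies $C_{25}$, $C_{26}$, and the constant from Theorem~\ref{teo1}, all of which trace back to $f$-dependent data such as $V=\mathrm{Var}(\log Df)$ and the H\"older constant of $n_f$). So you have correctly identified a gap in the paper, not in your own argument. That said, your proposed ``loss-of-memory'' fix is not quite complete as written: knowing that $\Z_{I^{n_0}_\alpha}(R^{n_0}(f))$ is $C^2$-close to the identity does not by itself give uniform control on the $C^{2+\nu}$ data of $g=R^{n_0}(f)$, which is what the constants in Theorems~\ref{teo1} and~\ref{teon4} actually depend on; one would need to show that the H\"older constant of the nonlinearity of the renormalizations is also eventually controlled by $k$ alone. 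This is plausible but requires more than bookkeeping. In any case, the core of your proof matches the paper exactly, and the discrepancy in the constant is the paper's issue rather than yours.
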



The structure of this paper is as follows.  In Section \textsection \ref{comsec} we describe general results on compositions of diffeomorfisms of class $C^{2+\nu}$. In Section \textsection \ref{rensec} we study renormalization of generalized interval exchange maps of genus one and prove Theorem \ref{teo1}. In  Section \textsection \ref{synsec} we codify the dynamics of $f$ using  an specially crafted symbolic dynamics to obtain finer geometric properties of the partitions associated with renormalizations of $f$ and we finally prove Theorem \ref{teon4} and Theorem \ref{teo2}.

This is the first of a series of two papers based on the Ph. D. Thesis of the first author Cunha \cite{tese}. In the second work \cite{artigo2} we continue our study of  the renormalization operator for generalized interval exchange transformations of genus one and its consequences, particularly  the rigidity (universality) phenomena in the setting of piecewise smooth homeomorphisms on the circle.

\section{Comparing compositions of  $C^{2+\nu}$ maps with Moebius maps }\label{comsec}

In this section, we show some results about composition of  $C^{2+\nu}$ diffeomorphisms, comparing these compositions with Moebius maps.  Let $f:[a,b]\to [f(a),f(b)]$  be an $C^2$ orientation-preserving diffeomorphism.  Define the {\it nonlinearity function} $n_f:[a,b]\to\mathbb{R}$ by
$$n_f(x)=\dfrac{D^2f(x)}{Df(x)}=D(\ln Df(x)).$$
Notice that
$$n_{f\circ g}(x) = n_f(g(x))Dg(x) + n_f(x),$$
consequently  if $f_i$ are $C^2$ diffeomorphisms  such that $f=f_n\circ \cdots \circ f_1$ is defined in $[a,b]$ we have
\begin{equation}\label{noncomp} \int_{[a,b]} \frac{D^2f(x)}{Df(x)} \ dx = \sum_{i=1}^n \int_{f^{i-1}[a,b]} \frac{D^2f_i(x)}{Df_i(x)}  \ dx.  \end{equation}
If $[a,b]=[0,1]$ we define
$$N_f =\int_{[0,1]} \dfrac{D^2f(x)}{Df(x)} \ dx.$$

The nonlinearity $n_f$ defines $f$ up its domain and image. Indeed,   given a continuous  function $n\colon [0,1]\rightarrow \mathbb{R}$ there is unique $C^{2}$-diffeomorphism $f\colon [0,1]\rightarrow [0,1]$ such that $f(0)=0$, $f(1)=1$ and $n_f=n$. Indeed, see Martens \cite{martens}
\begin{eqnarray}
\label{recuperada}
f(x)=\dfrac{\int_0^x\exp \left(\int_0^{z} n(y)dy\right)dz}{\int_0^1\exp \left(\int_0^{z} n(y)dy\right)dz}.
\end{eqnarray}
Let $f\colon [0,1] \rightarrow [f(0),f(1)]$ be an $C^2$ orientation-preserving  diffeomorphism.  If $[a,b] \subset [0,1]$, let $\tilde{f}= \mathcal{Z}_{[a,b]}(f)$  be the Zoom of $f$ in $[a,b].$ Then
\begin{eqnarray}
\label{nlZ}
n_{\Z(f)}(x)=(b-a) \cdot n_f(a+x(b-a)).
\end{eqnarray}
Suppose
\begin{eqnarray}
\label{nlh}
|n_f(x)-n_f(y)|\leq  C_0\cdot |x-y|^{\nu}.
\end{eqnarray}
for $x,y \in [0,1]$ and
$$|n_f|_{C^0[0,1]}:=\sup_{x\in[0,1]}\{|n_f(x)|\}\leq C_1.$$

Then by \eqref{nlh} and \eqref{nlZ} we have that
\begin{eqnarray}
\label{onl}
|n_{\Z(f)}(x)-n_{\Z(f)}(y)| \leq C_0 \cdot \delta^{1+\nu} \mbox{~~and~~~} |n_{\Z(f)}|_{C^0[0,1]}\leq C_1\cdot\delta,
\end{eqnarray}
with $x,y \in [0,1]$ and  $\delta=b-a.$ Note that

\begin{equation}\label{zoomnon} N_{Z(f)}= \int_0^1 (b-a) \cdot n_f(a+x(b-a)) \ dx = \int_{[a,b]} \frac{D^2f(x)}{Df(x)} \ dx.\end{equation}





\begin{pro}
\label{moebius}
Let $f:[0,1]\to [f(0),f(1)]$  be an orientation-preserving diffeomorphism of class $C^{2+\nu},$  $[a,b]\subset [0,1]$ and define $\tilde{f}=Z_{[a,b]}f$.  Then
$$\d_{C^2}(\f,M_{N_{\f}})=\O(\delta^{1+\nu}),$$
where $\delta =b-a$.
\end{pro}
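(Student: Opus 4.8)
The plan is to carry out all the estimates at the level of the nonlinearity functions and to transport them back to the maps only at the end, via the reconstruction formula~\eqref{recuperada}. Both $\f$ and $M_{N_\f}$ are $C^{2}$ orientation-preserving diffeomorphisms of $[0,1]$ onto itself fixing $0$ and $1$, so each is determined by its nonlinearity through~\eqref{recuperada}; moreover, by construction they share the same integrated nonlinearity: $\int_0^1 n_\f = N_\f$ by definition of $N_{(\cdot)}$, and $\int_0^1 n_{M_{N_\f}} = N_\f$ by the characterization of $M_N$ recalled after~\eqref{defMoebius}. Writing $n_1=n_\f$ and $n_2=n_{M_{N_\f}}$, the heart of the proof is the bound $|n_1-n_2|_{C^0[0,1]}=\O(\delta^{1+\nu})$; once this is in hand, a routine manipulation of~\eqref{recuperada} yields the conclusion.

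To estimate $n_1$: since $f$ is $C^{2+\nu}$ on the compact interval $[0,1]$, its nonlinearity is bounded and $\nu$-H\"older, so~\eqref{nlh} holds for suitable $C_0,C_1$, and then~\eqref{onl} gives $\mathrm{osc}_{[0,1]}\,n_1\le C_0\,\delta^{1+\nu}$ and $|n_1|_{C^0[0,1]}\le C_1\,\delta$. In particular $|N_\f|\le C_1\,\delta$, and since $N_\f=\int_0^1 n_1$ we get $|n_1(x)-N_\f|\le C_0\,\delta^{1+\nu}$ for all $x$. To estimate $n_2$: differentiating~\eqref{defMoebius} gives $n_{M_N}(x)=2(1-e^{-N/2})\big/\big(1+x(e^{-N/2}-1)\big)$, whence $|\tfrac{d}{dx}n_{M_N}(x)|=\O(N^2)$ uniformly in $x\in[0,1]$ for bounded $N$; thus $\mathrm{osc}_{[0,1]}\,n_{M_N}=\O(N^2)$, and since $\int_0^1 n_{M_N}=N$ this yields $|n_{M_N}(x)-N|=\O(N^2)$. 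Taking $N=N_\f=\O(\delta)$ and using $\delta^2\le\delta^{1+\nu}$ (recall $\nu\le1$), we obtain $|n_1(x)-n_2(x)|\le|n_1(x)-N_\f|+|N_\f-n_2(x)|=\O(\delta^{1+\nu})$.

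It remains to propagate this $C^0$-estimate on the nonlinearities, together with the smallness $|n_i|_{C^0}=\O(\delta)$, to a $C^2$-estimate on the maps. Set $\phi_i(z)=\exp\big(\int_0^z n_i(y)\,dy\big)$, so that by~\eqref{recuperada} the maps $g_1=\f$ and $g_2=M_{N_\f}$ satisfy $g_i(x)=\int_0^x\phi_i\big/\int_0^1\phi_i$, $Dg_i=\phi_i\big/\int_0^1\phi_i$, and $D^2g_i=n_i\phi_i\big/\int_0^1\phi_i$. From $|n_i|_{C^0}=\O(\delta)$ one gets $\phi_i=1+\O(\delta)$ uniformly and $\int_0^1\phi_i=1+\O(\delta)$, bounded away from $0$; and by the mean value theorem $|\phi_1(z)-\phi_2(z)|\le\max_i|\phi_i|_{C^0}\cdot\int_0^z|n_1-n_2|=\O(\delta^{1+\nu})$. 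Substituting into the three quotient expressions above and splitting each difference in the usual way — adding and subtracting a mixed term, so that every resulting summand is a uniformly bounded factor times one of the $\O(\delta^{1+\nu})$ quantities $\phi_1-\phi_2$ or $n_1-n_2$, over a denominator bounded below — gives $|Dg_1-Dg_2|_{C^0}=\O(\delta^{1+\nu})$ and $|D^2g_1-D^2g_2|_{C^0}=\O(\delta^{1+\nu})$; and $|g_1-g_2|_{C^0}=\O(\delta^{1+\nu})$ follows by integrating the first of these (since $g_1(0)=g_2(0)$). Hence $\d_{C^2}(\f,M_{N_\f})=\O(\delta^{1+\nu})$. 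The only point needing any care is the oscillation estimate for $n_{M_N}$ in terms of $N$; the rest is bookkeeping with~\eqref{recuperada}.
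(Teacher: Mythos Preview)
Your proof is correct and takes a somewhat different route from the paper's. The paper introduces an intermediate comparison map $f_{\tilde N}$, the unique self-diffeomorphism of $[0,1]$ with \emph{constant} nonlinearity $\tilde N=N_{\f}$, and splits $\d_{C^2}(\f,M_{\tilde N})\le \d_{C^2}(\f,f_{\tilde N})+\d_{C^2}(f_{\tilde N},M_{\tilde N})$; the second term is handled by Lemma~\ref{ncte} via explicit Taylor expansions of $f_N$ and $M_N$ to order $N^2$, and the first by deriving the explicit formulas \eqref{estZ}--\eqref{estD2Z} for $\f$, $D\f$, $D^2\f$ from $n_{\f}=\tilde N+\O(\delta^{1+\nu})$. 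You instead compare $n_{\f}$ and $n_{M_{\tilde N}}$ directly, using the closed form $n_{M_N}(x)=2(1-e^{-N/2})/\bigl(1+x(e^{-N/2}-1)\bigr)$ to get $|n_{M_N}(x)-N|=\O(N^2)$, and then push the bound $|n_{\f}-n_{M_{\tilde N}}|_{C^0}=\O(\delta^{1+\nu})$ through the reconstruction formula~\eqref{recuperada}. Your argument is a bit more conceptual---it isolates a reusable ``$C^0$-close nonlinearities $\Rightarrow$ $C^2$-close maps'' step and avoids the case-by-case Taylor computations of Lemma~\ref{ncte} and \eqref{estZ}--\eqref{estD2Z}---while the paper's version makes the individual error terms more visible. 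Both implicitly use $\nu\le 1$ (so that $\delta^2\le\delta^{1+\nu}$), which is the standard convention for the H\"older exponent and is also needed in the paper's proof when invoking Lemma~\ref{ncte}.
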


Before we prove the Proposition \ref{moebius} we prove the following lemma:

\begin{lemma}
\label{ncte} Let $N \in \mathbb{R}$.
Let $f_N\colon [0,1]\mapsto [0,1]$ be a diffeomorphism such that $n_f(x)=N$ for all $x\in [0,1]$, $f(0)=0$, $f(1)=1$. Then
$$\d_{C^2}(f_N,M_{N})=\O(N^2).$$
\end{lemma}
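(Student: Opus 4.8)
The plan is to prove Lemma \ref{ncte} by writing down both $f_N$ and $M_N$ explicitly and Taylor-expanding in $N$. First I would note that $f_N$ is obtained from the recovery formula \eqref{recuperada} with the constant nonlinearity $n(y)=N$: this gives
$$f_N(x)=\dfrac{\int_0^x e^{Nz}\,dz}{\int_0^1 e^{Nz}\,dz}=\dfrac{e^{Nx}-1}{e^N-1}.$$
(For $N=0$ one reads this as $f_0=\mathrm{Id}$, consistently.) Similarly, from \eqref{defMoebius} the Moebius map $M_N$ has the closed form $M_N(x)=\dfrac{xe^{-N/2}}{1+x(e^{-N/2}-1)}$. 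So the whole lemma reduces to a completely explicit one-variable estimate: both $f_N$ and $M_N$ are analytic functions of $(x,N)\in[0,1]\times\mathbb{R}$ that agree with $x$ when $N=0$, and I must show their $C^2[0,1]$-distance is $\O(N^2)$.

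The key step is to control the first-order term in $N$. Write $f_N(x)=x+N\,a(x)+\O(N^2)$ and $M_N(x)=x+N\,b(x)+\O(N^2)$ uniformly in $x\in[0,1]$ (together with the analogous expansions of the first two derivatives in $x$, which are legitimate since the dependence on $N$ is smooth and $[0,1]$ is compact). A direct expansion of $f_N=(e^{Nx}-1)/(e^N-1)$ gives $a(x)=\tfrac12(x^2-x)$. For $M_N$, expanding $e^{-N/2}=1-\tfrac{N}{2}+\O(N^2)$ in the formula for $M_N$ yields, after simplification, $b(x)=\tfrac12(x^2-x)$ as well. Hence $a\equiv b$, so $f_N(x)-M_N(x)=\O(N^2)$; the same computation applied to the $x$-derivatives (which can be done by differentiating the closed forms in $x$ first and then expanding in $N$, since everything is analytic and bounded on $[0,1]$) shows $D^{(i)}(f_N-M_N)=\O(N^2)$ for $i=1,2$ as well. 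Summing over $i=0,1,2$ gives $\d_{C^2}(f_N,M_N)=\O(N^2)$.

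The reason the $\O(N^2)$ rate (rather than merely $o(N)$) holds is precisely that the linear coefficients coincide, and the remainders are genuine $\O(N^2)$ because $f_N$ and $M_N$ extend analytically past $N=0$; the constant in $\O(N^2)$ is uniform because we only ever evaluate bounded analytic functions on the compact set $[0,1]$. The only mildly delicate point — and the main thing to be careful about — is justifying that the remainders in the Taylor expansions in $N$ are $\O(N^2)$ \emph{uniformly in $x$} and \emph{simultaneously for the function and its first two $x$-derivatives}; this follows from the fact that $(x,N)\mapsto f_N(x)$ and $(x,N)\mapsto M_N(x)$, along with their $\partial_x$ and $\partial_x^2$ derivatives, are continuous (indeed real-analytic) on $[0,1]\times(-\epsilon,\epsilon)$ and vanish to first order at $N=0$ with matching linear parts, so Taylor's theorem with a compactness argument in $x$ gives the claim.

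One technical remark worth inserting in the write-up: although the lemma is stated for a diffeomorphism with $n_{f_N}\equiv N$, uniqueness of such a map (noted after \eqref{recuperada}) makes the closed form above the honest definition of $f_N$, so there is no ambiguity; and the case $N=0$ is trivial since then $f_N=M_N=\mathrm{Id}$ and both sides are $0$. This plan involves no obstacle of substance — it is a direct, if slightly fiddly, explicit computation — so the work is entirely in organizing the expansions cleanly.
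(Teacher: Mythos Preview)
Your proposal is correct and follows essentially the same approach as the paper: both compute $f_N(x)=(e^{Nx}-1)/(e^N-1)$ from \eqref{recuperada} and then Taylor-expand $f_N$ and $M_N$ (and their first two $x$-derivatives) in $N$ to see that the linear-in-$N$ parts coincide, leaving an $\O(N^2)$ remainder. The only cosmetic difference is that the paper expands each of $|f_N-M_N|$, $|Df_N-DM_N|$, $|D^2f_N-D^2M_N|$ by hand, whereas you package the same computation as ``matching first-order coefficients plus analyticity/compactness for the uniform remainder.''
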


\dem By \eqref{recuperada} we have
$$f_{N}(x)=\dfrac{\int_0^x e^{Nz}dz}{\int_0^1e^{Nz}dz}=\dfrac{e^{Nx}-1}{e^N-1}.$$
Therefore,

\begin{eqnarray*}
|f_N(x) - M_{N}(x)|& = & \left|\dfrac{e^{Nx}-1}{e^N-1}-\dfrac{xe^{\frac{-N}{2}}}{1+x(e^{\frac{-N}{2}}-1)}\right|\\
& = & \left| \dfrac{Nx+\frac{N^2x^2}{2}+\O(N^3)}{N+\frac{N^2}{2}+\O(N^3)}-\dfrac{x(1-\frac{N}{2}+\O(N^2))}{1+x(-\frac{N}{2}+\O(N^2))}\right|\\
& = &  \left| x(1+\dfrac{Nx}{2}-\dfrac{N}{2})+\O(N^2)- x(1+\dfrac{Nx}{2}-\dfrac{N}{2})+\O(N^2)\right|\\
&= & \O(N^2).
\end{eqnarray*}

\begin{eqnarray*}
|Df_N(x) - DM_{N}(x)|& = & \left|\dfrac{Ne^{Nx}}{e^N-1}-\dfrac{e^{\frac{-N}{2}}}{[1+x(e^{\frac{-N}{2}}-1)]^2}\right|\\
& = &  \left| \dfrac{(1+Nx+\O(N^2))}{(1+\frac{N}{2}+\O(N^2))}-  e^{-\frac{N}{2}}(1+\dfrac{Nx}{2}+\O(N^2))^2\right|\\
& = &  \left|  (1+Nx)(1-\dfrac{N}{2})+\O(N^2) -  e^{-\frac{N}{2}}\left(1+Nx+\O(N^2)\right)\right|\\
& = &  \left| 1-\dfrac{N}{2}+Nx+\O(N^2)- 1-\dfrac{N}{2}+Nx+\O(N^2)\right|\\
&= & \O(N^2).
\end{eqnarray*}

\begin{eqnarray*}
|D^2f_N(x) - D^2M_{N}(x)|& =& \left|\dfrac{N^2e^{Nx}}{e^N-1}-\dfrac{-2e^{\frac{-N}{2}}(e^{-\frac{N}{2}}-1)}{[1+x(e^{\frac{-N}{2}}-1)]^3}\right|\\
& = &  \left| N+\O(N^2) - (N+\O(N^2))(1+\dfrac{Nx}{2}+\O(N^2))^3 \right|\\
&= & \O(N^2).
\end{eqnarray*}
\cqd

\begin{proof}[Proof of Proposition \ref{moebius}.] By Eq. (\ref{onl}) we have
$$|N_{\tilde{f}} |\leq |n_f|_{C^0[0,1]} \delta.$$
To simplify the notation, denote $\tilde{N}=N_{\tilde{f}}$. First note that
$$\d_{C^2}(\f,M_{\tilde{N}})\leq \d_{C^2}(\f,f_{\tilde{N}})+\d_{C^2}(f_{\tilde{N}},M_{\tilde{N}}).$$
In view of Lemma \eqref{ncte}, only need to estimate the first term of the right hand side. For this note that $\tilde{N}=\int_0^1n_{\f}(s)ds=n_{\f}(\theta)$ for some $\theta\in[0,1].$ If
$$|n_f(x)-n_f(y)|\leq C_0 |x-y|^{\nu},$$
then by Eq. (\ref{onl}) we have
 $|n_{\f}(x)-\tilde{N}|=|n_{\f}(x)-n_{\f}(\theta)|\leq C_0\cdot\delta^{1+\nu},$ so  $n_{\f}(x)=\tilde{N}+\O(\delta^{1+\nu}).$
Then by \eqref{recuperada} we obtain

\begin{eqnarray}
\f(x)&=&x(1-\dfrac{\tilde{N}}{2}+\dfrac{\tilde{N}}{2}x+\O(\delta^{1+\nu}));\label{estZ}\\
D\f(x)&=&1+\tilde{N}x-\dfrac{\tilde{N}}{2}+\O(\delta^{1+\nu});\label{estDZ}\\
D^2\f(x)&=&\tilde{N}+\O(\delta^{1+\nu}).\label{estD2Z}
\end{eqnarray}

Using the estimates  for $f_{\tilde{N}},$ $Df_{\tilde{N}}$ and $D^2 f_{\tilde{N}}$ similar to those in the proof  of  Lemma \ref{ncte}, the proof is complete. \end{proof}

From now on let $f_i: [0,1]\to  [0,1]$, $i \in \mathbb{N}$ be  orientation-preserving diffeomorphisms of class $C^{2+\nu}$, with $f_i(0)=0$, $f_i(1)=1$, and such that there exist $C_0, \ C_1 > 0$ satisfying
\begin{eqnarray}
\label{nlh2}
|n_{f_i}(x)-n_{f_i}(y)|\leq  C_0\cdot |x-y|^{\nu}.
\end{eqnarray}
and
$$|n_{f_i}|_{C^0[0,1]}\leq  C_1.$$
for every $i \in \mathbb{N}$. Let $[a_i,b_i]\subset [0,1]$, $\delta_i =b_i-a_i$, and    $\f_i=\mathcal{Z}_{[a_i,b_i]}(f_i),$ $M_i=M_{N_{\f_i}},$
$$\f_1^n=\f_n\circ \f_{n-1}\circ \cdots\circ\f_1 \mbox{~~and~~}M_1^n=M_n\circ M_{n-1}\circ\cdots\circ M_1.$$

The following Proposition is the main result of this section. It compares the compositions of $\f_i's$ and $M_i's.$

\begin{pro}[see also \cite{khanin}]
\label{composicao}  Let $f_i$  be as above. Then for every $C_2 > 0$  there exists $C_3 > 0$ with the following property.  If $\sum_{i=1}^n\delta_i\leq C_2,$ then
$$|\f_1^n-M_1^n|_{C^2}\leq C_3 \cdot (\max_{1\leq j \leq n} \delta_j)^{\nu}.$$
\end{pro}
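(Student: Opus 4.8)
The plan is to compare $\f_1^n$ with $M_1^n$ by a telescoping argument over the compositions, using Proposition \ref{moebius} to control each single factor and then propagating the error through the composition while keeping track of the distortion constants. Write $\delta = \max_{1 \le j \le n} \delta_j$. By Proposition \ref{moebius} applied to each $f_i$ on $[a_i,b_i]$, we have $\d_{C^2}(\f_i, M_i) = \O(\delta_i^{1+\nu}) = \O(\delta_i \cdot \delta^\nu)$, where the implied constant depends only on $C_0, C_1$. The key point is that this gives a $C^2$-bound on the \emph{increment} $\f_i - M_i$ that is summable: $\sum_i \delta_i \le C_2$, so $\sum_i \d_{C^2}(\f_i,M_i) = \O(C_2 \cdot \delta^\nu)$.

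The main step is then a composition/telescoping estimate. Writing
$$\f_1^n - M_1^n = \sum_{i=1}^n \big( M_n \circ \cdots \circ M_{i+1} \circ \f_i \circ \f_{i-1} \circ \cdots \circ \f_1 - M_n \circ \cdots \circ M_{i+1} \circ M_i \circ \f_{i-1} \circ \cdots \circ \f_1 \big),$$
each summand is of the form $G \circ \f_i \circ F - G \circ M_i \circ F$ with $G = M_n \circ \cdots \circ M_{i+1}$ (a Moebius map, hence with uniformly bounded $C^2$-norm and derivatives bounded away from zero on $[0,1]$, since each $M_j$ fixes $0$ and $1$ and $\sum N_{\f_j}$ is bounded by $C_1 C_2$) and $F = \f_{i-1} \circ \cdots \circ \f_1$. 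I would first establish, via the chain rule and the formula $n_{g \circ h}(x) = n_g(h(x))Dh(x) + n_h(x)$, that the intermediate compositions $F$ have $C^1$-norms and nonlinearities bounded uniformly in $n$ and $i$ (again using $\sum \delta_j \le C_2$ to keep $|n_F|_{C^0}$ and hence $\log DF$ bounded, so $DF$ is bounded above and below); the same for the $\f_i$ themselves from \eqref{onl}. Then a routine application of the mean value theorem in $C^2$ — differentiating $G \circ \psi \circ F$ twice and subtracting — shows
$$\big| G \circ \f_i \circ F - G \circ M_i \circ F \big|_{C^2} \le \const \cdot |G|_{C^2} \cdot (1 + |F|_{C^1}^2) \cdot |\f_i - M_i|_{C^2} \le \const \cdot \d_{C^2}(\f_i, M_i),$$
with a constant depending only on $C_0, C_1, C_2$. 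Summing over $i$ and using the summability noted above yields $|\f_1^n - M_1^n|_{C^2} \le C_3 \cdot \delta^\nu$.

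The main obstacle is the uniform control of the intermediate compositions: one must verify that $|F|_{C^1}$, $|G|_{C^2}$, and the lower bounds on $DF$, $DG$ do not degenerate as $n \to \infty$. This is where the hypothesis $\sum_{i=1}^n \delta_i \le C_2$ is essential — it is exactly what bounds $|n_{\f_1^n}|_{C^0} \le C_1 \sum_i \delta_i \le C_1 C_2$ via \eqref{onl} and the cocycle identity for nonlinearities, which in turn bounds $\log D\f_1^n$ and its Moebius counterpart uniformly. A secondary technical point is that $C^2$ is not a Banach algebra under composition, so one must be slightly careful writing out the second-derivative chain-rule terms; but since all maps here are $C^{2+\nu}$ with uniformly bounded norms, this is harmless. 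I would also remark that the argument is essentially the one in Khanin and Vul \cite{khanin}, adapted here to the $C^{2+\nu}$ setting with the Hölder exponent $\nu$ appearing in the final rate.
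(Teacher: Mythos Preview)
Your proposal is correct and follows essentially the same route as the paper: the identical telescoping decomposition $\f_1^n - M_1^n = \sum_i (M_{i+1}^n\circ\f_1^i - M_i^n\circ\f_1^{i-1})$, the same uniform a priori bounds on $D\f_1^{i}$, $D^2\f_1^{i}$ and on the derivatives of the M\"obius tails $M_{i+1}^n$ (the paper records these as Lemmas~\ref{DZ}, \ref{D2Z}, \ref{DM}), and the same termwise estimate via Proposition~\ref{moebius} summed against $\sum_i\delta_i\le C_2$. One small imprecision: your displayed bound $|G\circ\f_i\circ F - G\circ M_i\circ F|_{C^2}\le \const\cdot|G|_{C^2}(1+|F|_{C^1}^2)|\f_i-M_i|_{C^2}$ is not quite right as stated---the second-derivative chain rule also produces a $D^2F$ factor and requires Lipschitz control of $D^2G$ (i.e.\ a bound on $D^3G$, which the paper records as $C_8$)---but you already note that the nonlinearity bound gives control of $D^2F$, and $G$ is M\"obius with bounded parameter, so all needed derivatives are available and the argument goes through exactly as in the paper.
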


The proof of Theorem 1 in Khanin and Vul \cite{khanin}  is  the main motivation  to Proposition \ref{composicao}.  Since Proposition \ref{composicao} is not stated explicitly in the paper cited above in its full generality,  we include the full argument for the sake of completeness. Before we prove this  proposition, we need some lemmas.

\begin{lemma}
\label{DZ}
There is $C_4=C_4(C_1,C_2)>0$ such that
$$e^{-C_4} \leq D\f_1^n(x)\leq e^{C_4},$$ for all $x\in [0,1]$ and for all $n\geq 0.$

\end{lemma}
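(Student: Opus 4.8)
The plan is to control $D\f_1^n$ by estimating the logarithm of the derivative via the nonlinearity, using the chain rule $\ln D\f_1^n(x) = \sum_{i=1}^n \ln D\f_i(\f_1^{i-1}(x))$. To bound each term, first I would establish a uniform bound on $\ln D\f_i$ on $[0,1]$ in terms of the integral of $n_{\f_i}$. Since $\f_i$ maps $[0,1]$ onto $[0,1]$, by the mean value theorem there is a point $c_i\in[0,1]$ with $D\f_i(c_i)=1$, hence $\ln D\f_i(x) = \int_{c_i}^x n_{\f_i}(t)\,dt$, so $|\ln D\f_i(x)| \leq \int_0^1 |n_{\f_i}(t)|\,dt \leq |n_{\f_i}|_{C^0[0,1]} \leq C_1\delta_i$ by \eqref{onl} (recall $\f_i = \mathcal{Z}_{[a_i,b_i]}(f_i)$ and the hypotheses \eqref{nlh2}). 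This is the key mechanism: each factor contributes a logarithmic error proportional to $\delta_i$.

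Next I would sum these bounds. From $\ln D\f_1^n(x) = \sum_{i=1}^n \ln D\f_i(\f_1^{i-1}(x))$ and the per-factor estimate, I get
$$|\ln D\f_1^n(x)| \leq \sum_{i=1}^n C_1 \delta_i \leq C_1 \sum_{i=1}^n \delta_i \leq C_1 C_2,$$
using the standing assumption $\sum_{i=1}^n \delta_i \leq C_2$. Setting $C_4 = C_1 C_2$ (which depends only on $C_1$ and $C_2$ as required) and exponentiating gives $e^{-C_4} \leq D\f_1^n(x) \leq e^{C_4}$ for all $x\in[0,1]$ and all $n\geq 0$, with the case $n=0$ being trivial since $\f_1^0 = \mathrm{Id}$.

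The argument is essentially routine once the two ingredients are in place; the only subtlety worth flagging is making sure the composition $\f_1^n$ is well-defined on all of $[0,1]$, i.e. that each $\f_i$ actually maps $[0,1]$ into $[0,1]$ — but this is built into the setup since each $\f_i = \mathcal{Z}_{[a_i,b_i]}(f_i)$ and $f_i(0)=0$, $f_i(1)=1$ force $\f_i\colon[0,1]\to[0,1]$ to be a bijection fixing the endpoints. So there is no genuine obstacle here; the lemma is a clean consequence of the nonlinearity bookkeeping in \eqref{onl} together with the $\ell^1$ control on the $\delta_i$.
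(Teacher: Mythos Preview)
Your proof is correct and follows essentially the same approach as the paper: bound $|\ln D\f_1^n|$ via the chain rule and the nonlinearity estimate $|n_{\f_i}|_{C^0}\leq C_1\delta_i$, then sum to get $C_4=C_1C_2$. The only cosmetic difference is that the paper bounds the ratio $\ln\frac{D\f_1^n(x)}{D\f_1^n(y)}$ and then chooses $y$ with $D\f_1^n(y)=1$, whereas you apply the mean value theorem to each factor $\f_i$ separately; the content is identical.
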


\dem

\begin{eqnarray*}
\ln\frac{D\f_1^n(x)}{D\f_1^n(y)}&=&\ln\frac{D\f_n(\f_1^{n-1}(x))\cdot D\f_{n-1}(\f_1^{n-2}(x))\cdots D\f_1(x)}{D\f_n(\f_1^{n-1}(y))\cdot D\f_{n-1}(\f_1^{n-2}(y))\cdots D\f_1(y)}\\
&=&\sum_{j=1}^n\ln D\f_j(\f_1^{j-1}(x))-\ln D\f_j(\f_1^{j-1}(y))\\
&=&\sum_{j=1}^n\int_{\f_1^{j-1}(y)}^{\f_1^{j-1}(x)}\frac{D^2\f_j(s)}{D\f_j(s)}ds\\
&=&\sum_{j=1}^n\frac{D^2\f_j(z_{j-1})}{D\f_j(z_{j-1})}|\f_1^{j-1}(x)-\f_1^{j-1}(y)|,
\end{eqnarray*}
for some $z_{j-1}\in[\f_1^{j-1}(y), \f_1^{j-1}(x)].$

Therefore by \eqref{onl} we have

$$\left| \ln\frac{D\f_1^n(x)}{D\f_1^n(y)} \right| \leq \sum_{j=1}^n \left|\frac{D^2\f_j(z_{j-1})}{D\f_j(z_{j-1})}\right|\leq C_1\cdot\sum_{j=1}^n \delta_j\leq C_1C_2=C_4.$$
Taking $y\in[0,1]$ such that $D\f_1^n(y)=1$ we have the result.
\cqd

\begin{lemma}
\label{D2Z}
There is $C_5=C_5(C_1,C_2)>0$ such that
$$|D^2\f_1^n(x)|\leq C_5,$$
for all $x\in[0,1]$ and for all $n\geq0.$
\end{lemma}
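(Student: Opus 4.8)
The plan is to bound $D^2\f_1^n$ by writing it recursively and using the two facts already available: Lemma \ref{DZ}, which gives uniform upper and lower bounds $e^{-C_4}\leq D\f_1^k\leq e^{C_4}$ on all the derivatives of the intermediate compositions, and the estimates \eqref{onl}, which give $|n_{\f_j}|_{C^0}\leq C_1\delta_j$ together with $\sum_j\delta_j\leq C_2$. First I would use the chain rule at the level of second derivatives. Differentiating $\f_1^n=\f_n\circ\f_1^{n-1}$ twice gives
\begin{equation*}
D^2\f_1^n(x)=D^2\f_n(\f_1^{n-1}(x))\,\bigl(D\f_1^{n-1}(x)\bigr)^2+D\f_n(\f_1^{n-1}(x))\,D^2\f_1^{n-1}(x).
\end{equation*}
It is more convenient to pass to the nonlinearity $n_{\f_1^n}=D^2\f_1^n/D\f_1^n$, for which the cocycle identity $n_{f\circ g}(x)=n_f(g(x))Dg(x)+n_g(x)$ stated right after the definition of $n_f$ gives, by induction,
\begin{equation*}
n_{\f_1^n}(x)=\sum_{j=1}^{n} n_{\f_j}\bigl(\f_1^{j-1}(x)\bigr)\,D\f_1^{j-1}(x).
\end{equation*}

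Next I would simply estimate this sum termwise: by \eqref{onl} each factor $|n_{\f_j}(\f_1^{j-1}(x))|\leq C_1\delta_j$, and by Lemma \ref{DZ} each factor $|D\f_1^{j-1}(x)|\leq e^{C_4}$, so
\begin{equation*}
|n_{\f_1^n}(x)|\leq e^{C_4}C_1\sum_{j=1}^n\delta_j\leq e^{C_4}C_1 C_2.
\end{equation*}
Finally, since $D^2\f_1^n=n_{\f_1^n}\cdot D\f_1^n$ and $D\f_1^n(x)\leq e^{C_4}$ again by Lemma \ref{DZ}, I obtain
\begin{equation*}
|D^2\f_1^n(x)|\leq e^{2C_4}C_1C_2=:C_5,
\end{equation*}
which depends only on $C_1$ and $C_2$ (through $C_4=C_4(C_1,C_2)$), as claimed. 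The case $n=0$ is trivial since $\f_1^0=\mathrm{Id}$.

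Honestly, I expect no serious obstacle here: the lemma is a routine consequence of the nonlinearity cocycle identity plus the distortion bound of the previous lemma, and the whole point is that the contributions $C_1\delta_j$ are summable because of the hypothesis $\sum_i\delta_i\leq C_2$. The only mild care needed is to make sure the composition $\f_1^{j-1}(x)\in[0,1]$ so that \eqref{onl} applies at the right point — but each $\f_i$ maps $[0,1]$ to $[0,1]$ by construction, so all intermediate points stay in $[0,1]$. If one preferred to avoid the nonlinearity reformulation, the direct recursion $|D^2\f_1^n|\leq C_1\delta_n e^{2C_4}+e^{C_4}|D^2\f_1^{n-1}|$ can be unwound to the same geometric-type bound, but the cocycle argument is cleaner and also sets up the $C^2$-closeness estimates needed for Proposition \ref{composicao}.
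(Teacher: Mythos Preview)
Your proof is correct and essentially identical to the paper's: both factor $|D^2\f_1^n|=|n_{\f_1^n}|\cdot|D\f_1^n|$, expand the nonlinearity via the cocycle identity as $\sum_{j=1}^n n_{\f_j}(\f_1^{j-1})\,D\f_1^{j-1}$, bound this sum by $e^{C_4}C_1C_2$ using Lemma~\ref{DZ} and \eqref{onl}, and then multiply by $|D\f_1^n|\leq e^{C_4}$ to arrive at the same constant $C_5=e^{2C_4}C_1C_2$.
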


\dem
Note that
\begin{eqnarray*}
|D^2\f_1^n(x)| & = & \left|\dfrac{D^2\f_1^n(x)}{D\f_1^n(x)}\right|\cdot |D\f_1^n(x)|\\
&= & \left|\sum_{j=1}^n\dfrac{D^2\f_j(\f_1^{j-1}(x))}{D\f_j(\f_1^{j-1}(x))}\cdot D\f_1^{j-1}(x)\right|\cdot   |D\f_1^n(x)|\\
& \leq & e^{2C_4}\sum_{j=1}^n\left|\dfrac{D^2\f_j(\f_1^{j-1}(x))}{D\f_j(\f_1^{j-1}(x))}\right|\\
&\leq & e^{2C_4}\cdot C_1\cdot\sum_{j=1}^n \delta_j \leq  e^{2C_4}\cdot C_1 C_2 = C_5.
\end{eqnarray*}
\cqd

\begin{lemma}
\label{DM}
There are  $C_6=C_6(C_1,C_2),C_7=(C_1,C_2) >0$ such that
$$e^{-C_6} \leq|DM_1^n(x)|\leq e^{C_6}, \  |D^2M_1^n(x)|\leq C_7, \ |D^3M_1^n(x)|\leq C_8.$$
for all $x\in[0,1]$ and for all $n\geq0.$
\end{lemma}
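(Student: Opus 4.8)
The plan is to prove Lemma \ref{DM} by exploiting the fact that each $M_i \in \mathcal{M}$ is a Moebius transformation, and that the composition $M_1^n$ is again a Moebius transformation sending $[0,1]$ onto $[0,1]$ with $M_1^n(0)=0$, $M_1^n(1)=1$. Indeed, by the group law $M_{N_1}\circ M_{N_2}=M_{N_1+N_2}$ recalled in \eqref{defMoebius}, we have $M_1^n=M_{S_n}$ where $S_n=\sum_{i=1}^n N_{\f_i}$. The first step is therefore to bound $|S_n|$: by \eqref{onl} (or \eqref{zoomnon}) we have $|N_{\f_i}|=|N_{Z_{[a_i,b_i]}(f_i)}|\leq |n_{f_i}|_{C^0[0,1]}\,\delta_i\leq C_1\delta_i$, so $|S_n|\leq C_1\sum_{i=1}^n\delta_i\leq C_1C_2$. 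Thus $M_1^n=M_{S_n}$ with $|S_n|$ bounded by a constant depending only on $C_1$ and $C_2$.

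The second step is to observe that, for $N$ ranging over the compact interval $[-C_1C_2,\,C_1C_2]$, the quantities $\sup_{x\in[0,1]}|D M_N(x)|$, $\sup_{x\in[0,1]}|D^2 M_N(x)|$, $\sup_{x\in[0,1]}|D^3 M_N(x)|$ and $\inf_{x\in[0,1]}|D M_N(x)|$ are all continuous (indeed real-analytic) functions of $N$, visible from the explicit formula \eqref{defMoebius}; the derivatives $DM_N$, $D^2M_N$, $D^3M_N$ of the rational function $M_N$ are themselves rational functions in $x$ whose denominators are powers of $1+x(e^{-N/2}-1)$, which is bounded away from $0$ uniformly for $x\in[0,1]$ and $N$ in the compact interval (since $e^{-N/2}-1>-1$ always, the denominator stays positive). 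Hence each of these four quantities attains a finite positive maximum, resp.\ a positive minimum, over the compact set of admissible $N$. Concretely, one sets
$$e^{C_6}:=\max_{|N|\leq C_1C_2}\sup_{x\in[0,1]}|DM_N(x)|,\quad e^{-C_6}\leq \min_{|N|\leq C_1C_2}\inf_{x\in[0,1]}|DM_N(x)|,$$
and analogously defines $C_7$ and $C_8$ as the corresponding suprema of $|D^2M_N|$ and $|D^3M_N|$; note $C_6$ may need to be enlarged so that the single constant controls both the sup and the inf of $|DM_N|$, which is harmless since $DM_N(x)>0$ throughout. Since $S_n$ lies in this interval for every $n\geq 0$, applying these bounds with $N=S_n$ yields the claim for all $x\in[0,1]$ and all $n\geq 0$.

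I do not expect any serious obstacle here: the lemma is essentially a compactness/continuity statement once one recognizes that composing elements of $\mathcal{M}$ stays in the one-parameter family $\{M_N\}$ and that the parameter stays bounded. The only point requiring a small amount of care is the uniform lower bound on the denominator $1+x(e^{-S_n/2}-1)$ appearing in $M_{S_n}$ and its derivatives: for $x\in[0,1]$ this expression equals $(1-x)+xe^{-S_n/2}$, a convex combination of $1$ and $e^{-S_n/2}$, hence lies between $\min(1,e^{-S_n/2})$ and $\max(1,e^{-S_n/2})$, and both of these are pinched between $e^{-C_1C_2/2}$ and $e^{C_1C_2/2}$; this keeps every derivative of $M_{S_n}$ bounded and keeps $DM_{S_n}$ bounded away from $0$. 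The statement in the excerpt is slightly informal in writing $C_7=(C_1,C_2)$ and then referring to $C_8$ without having introduced it; in the write-up I would simply declare $C_6,C_7,C_8>0$ all depending only on $C_1,C_2$ and proceed as above.
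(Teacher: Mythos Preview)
Your proof is correct and follows exactly the same approach as the paper: identify $M_1^n=M_{S_n}$ via the group law, bound $|S_n|\leq C_1C_2$ from $|N_{\f_i}|\leq C_1\delta_i$ and $\sum_i\delta_i\leq C_2$, and then read off the derivative bounds from the explicit formula \eqref{defMoebius}. Your write-up simply supplies more detail (the compactness argument and the explicit lower bound on the denominator $1+x(e^{-S_n/2}-1)$) where the paper says ``one can easily use Eq.~\eqref{defMoebius} to obtain estimates.''
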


\dem  Since $\mathcal{M}$ is a commutative Lie group, we have $M_1^n = M_N$, where
$N= \sum_{i=1}^n N_{\f_i}$
By Eq. (\ref{onl}) we have $|N_{\f_i}|< C_1 \delta_i$, so
$$|N| \leq C_1\cdot C_2 .$$

One can easily use Eq. (\ref{defMoebius}) to obtain estimates for $D^iM_1^n$, $i=1,2,3$. \cqd

\begin{proof}[Proof of Proposition \ref{composicao}.]

We write

$$\f_1^n-M_1^n=\sum_{i=1}^n M_{i+1}^{n}\circ\f_1^i-M_i^n\circ\f_1^{i-1},$$
where $M_{n+1}^n=f_1^0=\mathrm{Id}.$ Then

\begin{eqnarray*}
|\f_1^n(x)-M_1^n(x)|&\leq& \sum_{i=1}^n \Big|M_{i+1}^n(\f_{i}\circ \f_1^{i-1})(x)-M_{i+1}^n(M_{i}\circ \f_1^{i-1})(x)\Big|\\
&\leq & e^{C_6}\cdot \sum_{i=1}^n \Big|\f_{i}\circ \f_1^{i-1}(x)-M_{i}\circ \f_1^{i-1}(x)\Big|\\
 &\leq & e^{C_6}\cdot C\cdot \sum_{i=1}^n \delta_i^{1+\nu}\\
  &\leq & e^{C_6}\cdot C  \cdot(\max_{1\leq i\leq n}\delta_i)^{\nu}\sum_i \delta_i\\
 &  \leq &   e^{C_6}\cdot C\cdot C_2\cdot(\max_{1\leq i\leq n}\delta_i)^{\nu},
\end{eqnarray*}
where $C>0$ is the constant given by Proposition \ref{moebius}.
\begin{eqnarray*}
\lefteqn{\Big|D\f_1^n(x)-DM_1^n(x)\Big| = }\\
& &=\Big|\sum_{i=1}^n\Big[ DM_{i+1}^n(\f_{i}\circ \f_1^{i-1}(x))\cdot D\f_i(\f_1^{i-1}(x))-{}\\
& &{}-DM_{i+1}^n(M_i\circ \f_1^{i-1}(x))\cdot DM_i(\f_1^{i-1}(x))
\Big] D\f_1^{i-1}(x)\Big|\\
 & &\leq e^{C_4}\cdot \sum_{i=1}^n\Big|DM_{i+1}^n(\f_{i}\circ \f_1^{i-1}(x))\cdot D\f_i(\f_1^{i-1}(x))-{}\\
& &{}-DM_{i+1}^n(M_i\circ \f_1^{i-1}(x))\cdot DM_i(\f_1^{i-1}(x))\Big|.
\end{eqnarray*}
Now, add and subtract the term $DM_{i+1}^n(\f_{i}\circ \f_1^{i-1}(x))\cdot DM_i(\f_1^{i-1}(x))$ in the above expression  to obtain
\begin{eqnarray*}
\lefteqn{\Big|D\f_1^n(x)-DM_1^n(x)\Big| \leq}\\
& &\leq e^{C_4}\cdot e^{C_6}\cdot\sum_{i=1}^n 
\Big|D\f_i(\f_1^{i-1}(x))-DM_i(\f_1^{i-1}(x))\Big|\\
& &+e^{C_4}\cdot e^{C_6}\cdot\sum_{i=1}^n \Big| DM_{i+1}^n( \f_{i}\circ \f_1^{i-1}(x))-DM_{i+1}^n(M_i\circ \f_1^{i-1}(x))\Big|\\
& & \leq  e^{C_4}\cdot e^{C_6}\cdot C\cdot\sum_{i=1}^n \delta_i^{1+\nu} + e^{C_4}\cdot e^{C_6} \cdot C_7 \cdot C  \cdot\sum_{i=1}^n \delta_i^{1+\nu}\\
 & &\leq e^{C_4}\cdot e^{C_6}\cdot (1+C_7)\cdot C\cdot  C_2\cdot(\max_{1\leq i\leq n}\delta_i)^{\nu}.
\end{eqnarray*}

Now note that
$$D^2\f_1^n(x)-D^2M_1^n(x)=(\mathrm{I})+(\mathrm{II})+(\mathrm{III}),$$
where
\begin{eqnarray*}
(\mathrm{I})&=&\sum_{i=1}^nD^2M_{i+1}^n(\f_{i}\circ \f_1^{i-1}(x))\cdot \Big(D\f_i(\f_1^{i-1}(x))\Big)^2\cdot \Big(D\f_1^{i-1}(x)\Big)^2\\
& &-\sum_{i=1}^nD^2M_{i+1}^n(M_i\circ \f_1^{i-1}(x))\cdot \Big(DM_i(\f_1^{i-1}(x))\Big)^2\cdot \Big(D\f_1^{i-1}(x)\Big)^2,
\end{eqnarray*}
\begin{eqnarray*}
(\mathrm{II})&=&\sum_{i=1}^nDM_{i+1}^n(\f_{i}\circ \f_1^{i-1}(x))\cdot D^2\f_i(\f_1^{i-1}(x))\cdot \Big(D\f_1^{i-1}(x)\Big)^2\\
& &-\sum_{i=1}^nDM_{i+1}^n(M_{i}\circ \f_1^{i-1}(x))\cdot D^2M_i(\f_1^{i-1}(x))\cdot \Big(D\f_1^{i-1}(x)\Big)^2
\end{eqnarray*}
and
\begin{eqnarray*}
(\mathrm{III})&=&\sum_{i=1}^nDM_{i+1}^n(\f_{i}\circ \f_1^{i-1}(x))\cdot D\f_i(\f_1^{i-1}(x))\cdot D^2\f_1^{i-1}(x)\\
& &-\sum_{i=1}^nDM_{i+1}^n(M_{i}\circ \f_1^{i-1}(x))\cdot DM_i(\f_1^{i-1}(x))\cdot D^2\f_1^{i-1}(x).
\end{eqnarray*}

In $(\mathrm{I})$ we first add  subtract the term
$$\sum_{i=1}^nD^2M_{i+1}^n(\f_{i}\circ \f_1^{i-1}(x))\cdot \Big(DM_i(\f_1^{i-1}(x))\Big)^2\cdot \Big(D\f_1^{i-1}(x)\Big)^2$$ and then we use Lemma \ref{DZ} and Lemma \ref{DM} with estimates for the first derivative of $\f_1^n$ and $M_1^n,$ to obtain

\begin{eqnarray}
\label{1}
|(\mathrm{I})| \leq  2 \cdot \max\{C_9,C_{10}\}\cdot(\max_{1\leq i\leq n}\delta_i)^{\nu},
\end{eqnarray}
where  $C_9=C_2\cdot C_7\cdot C\cdot e^{2C_4}(e^{C_4}+e^{C_6})$ and $C_{10}=C\cdot C_2\cdot C_8 \cdot e^{2C_4}\cdot e^{C_6}.$

In $(\mathrm{II})$, we first  add and subtract the term
$$\sum_{i=1}^nDM_{i+1}^n(M_{i}\circ \f_1^{i-1}(x))\cdot D^2\f_i(\f_1^{i-1}(x))\cdot \Big(D\f_1^{i-1}(x)\Big)^2$$ and then we use Lemma \eqref{DZ} and Lemma \eqref{DM} with estimates for the first derivative of $\f_1^n$ and $M_1^n,$ to obtain

\begin{eqnarray}
\label{2}
|(\mathrm{II})| \leq 2 \cdot \max\{C_{11},C_{12}\} \cdot(\max_{1\leq i\leq n}\delta_i)^{\nu},
\end{eqnarray}
where $C_{11}=C\cdot C_2\cdot C_5\cdot C_7\cdot e^{2C_4}$ and $C_{12}=C\cdot C_2\cdot e^{2C_4+C_6}.$

Finally we add and subtract  the expression
$$\sum_{i=1}^nDM_{i+1}^n(\f_{i}\circ \f_1^{i-1}(x))\cdot DM_i(\f_1^{i-1}(x))\cdot D^2\f_1^{i-1}(x)$$ in $(\mathrm{III})$ and use again   Lemma \ref{DZ} and Lemma \ref{DZ}, obtaining

\begin{eqnarray}
\label{3}
|(\mathrm{III})| \leq2 \cdot \max\{C_{13},C_{14}\}\cdot(\max_{1\leq i\leq n}\delta_i)^{\nu},
\end{eqnarray}
where $C_{13}=C\cdot C_2\cdot C_5\cdot e^{C_6}$ and $C_{14}=C\cdot C_2\cdot C_5\cdot C_7\cdot e^{C_6}.$

Taking $C_3=6 \cdot \max\{C_9,C_{10},C_{11},C_{12},C_{13},C_{14}\}$ we get the result.
\end{proof}

\section{Renormalization for genus one g.i.e.m. }\label{rensec}

Let  $f \in \mathcal{B}^{2+\nu}_k$ be a g.i.e.m. with $d$ intervals.   If $f$ has only one discontinuity then if we identify the endpoints of its domain $I$ we obtain a piecewise smooth homeomorphism of the circle with irrational rotation. So we can apply the Denjoy Theorem for these maps, proving the non existence of wandering intervals. That is a main difference between genus on g.i.e.m. and those with higher genus (even piecewise affine g.i.e.m. with higher genus can have wandering intervals).  In this section we will study this relation between genus one g.i.e.m. and piecewise homeomorphisms of the circle in more details. The combinatorial analysis next somehow extends the results of Nogueira and Rudolph \cite{nogueira}.

For the sake of simplify the notation, assume  that $f$ has only one discontinuity. Note that $f$ can be seen as a g.i.e.m. in $f \in \mathcal{B}^{0+1}_k$ with {\it two} intervals.  Indeed, if $I_\alpha=(c_\alpha,d_\alpha)$, where $d_\alpha$ the the unique point of discontinuity of $f$, define
$$J_A:= \overline{\cup_{\pi_0(\beta)\leq \pi_0(\alpha)}  I_\beta},$$
$$J_B:= \overline{\cup_{\pi_0(\beta)> \pi_0(\alpha)}  I_\beta}.$$

Then $(f,\{A,B\},\{J_A,J_B\})$ is a g.i.e.m. with two intervals.   We can either renormalize as a g.i.e.m. with $d$ intervals , denoted by
$$R_d(f), R_d^2(f),R_d^3(f), \dots$$
 or as a g.i.e.m. with two intervals, denoted
 $$R_2(f), R_2^2(f),R_2^3(f), \dots$$
 We call $R_d^i$ the $i$-th $d$-renormalization of $f$ and $R_2^i$ the $i-$th $2$-renormalization of $f$.
If we see $f$ as a homeomorphism of the circle then we can do the usual renormalization of the circle. These sequence of renormalizations, denoted $R_{rot}^i(f)$ turns out to be just a acceleration of the Rauzy-Veech induction consisting   in the subsequence of $2$-renormalizations $R_2^{n_i}(f)$ defined in the following way:  $n_0=0$ and $n_{i+1}$ is the first $n> n_{i}$ whose type is distinct from the type of the $n_i$-th $2$-renormalization.

The relation between the $d$ and $2$-renormalizations is given by the following proposition

\begin{pro} \label{subseq}Let $f$ be a genus one g.i.e.m with $d$ intervals in $\mathcal{B}_k$ with only one discontinuity,  where $\pi_1(\alpha_0)=1$ and $\pi_0(\alpha_1)=1$. One of the two cases occurs
\begin{itemize}

\item[(A)] We have
$$\overline{\cup_{\pi_1(\beta)\geq \pi_1(\alpha_1)}f( I_\beta) } \subset \overline{\cup_{\pi_0(\beta)\geq \pi_0(\alpha_0)} I_\beta}.$$Then $f$ is $2$-renormalizable of type $0$ and $R_2(f)=R_d^n(f)$, where $n$ is the first $d$-renormalization where the letter $\alpha_*$ wins from  letter $\alpha_0.$ Here $\al_*$ is such that $f(c_{\al_0})\in I_{\al_*}.$
\item[(B)] We have
$$ \overline{\cup_{\pi_0(\beta)\geq \pi_0(\alpha_0)} I_\beta}\subset \overline{\cup_{\pi_1(\beta)\geq \pi_1(\alpha_1)}f( I_\beta) }.$$Then $f$ is $2$-renormalizable of type $1$ and $R_2(f)=R_d^n(f)$, where $n$ is the first $d$-renormalization where the letter $\alpha_1$ wins  from  letter $\alpha_*$. Here $\al_*$ is such that $c_{\al_*}\in f(I_{\al_1}).$
\end{itemize}
\end{pro}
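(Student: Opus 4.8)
The plan is to analyze directly what the $2$-renormalization of $f$ does when $f$ is viewed as a g.i.e.m. with the two-letter alphabet $\{A,B\}$ and partition $\{J_A,J_B\}$, and then identify the resulting first-return interval with the first-return interval produced by a suitable number of $d$-renormalizations. First I would make the combinatorics of the two-interval presentation explicit. With $\pi_0(\alpha_1)=1$ and $\pi_1(\alpha_0)=1$, the point $d_\alpha=\partial J_B$ is the unique discontinuity, so $J_A$ is to the left of $J_B$ in the domain; in the image, $f(J_A)=\overline{\cup_{\pi_0(\beta)\le\pi_0(\alpha)}f(I_\beta)}$ and $f(J_B)=\overline{\cup_{\pi_0(\beta)>\pi_0(\alpha)}f(I_\beta)}$. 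The letter $\alpha(0)$ for the two-interval system (the one occupying the last position in the domain) is $B$, and $\alpha(1)$ (last in the image) is whichever of $A,B$ contains $f$ of the right endpoint of $I$; using $\pi_1(\alpha_0)=1$ one checks $f(I_{\alpha_0})$ is leftmost in the image, which pins down the loser/winner dichotomy. The condition in case (A), namely $\overline{\cup_{\pi_1(\beta)\ge\pi_1(\alpha_1)}f(I_\beta)}\subset\overline{\cup_{\pi_0(\beta)\ge\pi_0(\alpha_0)}I_\beta}$, is exactly the statement $|f(J_{\alpha(1)})|<|J_{\alpha(0)}|$ (i.e. $|I_{\alpha(0)}^{(2)}|>|f(I_{\alpha(1)}^{(2)})|$) in the two-interval system, hence $f$ is $2$-renormalizable of type $0$; case (B) is the reverse inequality, giving type $1$. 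So the dichotomy (A)/(B) is just the trichotomy of Section \ref{renrv} applied to the two-interval presentation, with the equality case excluded because $f$ has no connection.

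Next, for case (A): the $2$-renormalization $R_2(f)$ is the first return map of $f$ to $I\setminus f(J_{\alpha(1)})$, which is $I\setminus \overline{\cup_{\pi_1(\beta)\ge\pi_1(\alpha_1)}f(I_\beta)}$ — an interval sharing the left endpoint of $I$. I would compare this with the $d$-renormalizations. Each $d$-renormalization removes from the current interval a subinterval at the right end (type $0$) or a subinterval which, in the domain, sits at position $<d$ (type $1$); in the genus-one situation with the discontinuity of $f$ at the common point, the relevant returns to $I$ with the left endpoint fixed are produced by a string of $d$-renormalizations in which the letter $\alpha_*$ (defined by $f(c_{\alpha_0})\in I_{\alpha_*}$) progressively "wins" until it wins directly from $\alpha_0$. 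The key geometric point is that $f(J_{\alpha(1)})=f(I_{\alpha_0})$ together with its images under the tower, and the interval one must excise to form $R_2(f)$ is precisely the union of the pieces excised by the first $n$ $d$-renormalizations, where $n$ is characterized as the first $d$-renormalization at which $\alpha_*$ beats $\alpha_0$. I would prove the equality $R_2(f)=R_d^n(f)$ by showing the two first-return intervals coincide (both equal $I\setminus$ the same union) and that the induced $\mathcal{A}$-indexed partitions agree, using the explicit description of $\mathcal{P}^1$ from Section \ref{renrv} and induction on the number of intermediate $d$-steps; uniqueness of the first-return map to a given interval then finishes it. Case (B) is handled symmetrically, now with the excised interval sitting on the right and $\alpha_1$ eventually winning from $\alpha_*$ where $c_{\alpha_*}\in f(I_{\alpha_1})$, and the type being $1$.

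The main obstacle I anticipate is the bookkeeping in identifying exactly which sequence of $d$-renormalization steps accumulates to a single $2$-renormalization step, and in verifying that along the way no $d$-renormalization overshoots (i.e. that the intervals $I^m$ of the $d$-scheme stay above the interval $I^1$ of the $2$-scheme until step $n$, and equal it at step $n$). This requires tracking how the winner/loser letters propagate: one has to argue that before step $n$ the winners are always $\alpha_*$ against letters other than $\alpha_0$ (in case (A)), which is where the genus-one hypothesis (at most two discontinuities) and the combinatorial normalization $\pi_0(\alpha_1)=1$, $\pi_1(\alpha_0)=1$ are used decisively. The no-connection hypothesis guarantees all the relevant inequalities between interval lengths are strict, so the induction never stalls and $n$ is finite and well defined. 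Everything else — the translation of the set-inclusion hypotheses into length inequalities, and the matching of partitions — is routine once the two-interval presentation is set up carefully.
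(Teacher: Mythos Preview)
Your overall strategy --- set up the two-interval presentation, read off the type of the $2$-renormalization from the length comparison, and then match the first-return interval of $R_2(f)$ with $I^n$ from the $d$-scheme --- is sound and would yield a proof. It is, however, a genuinely different route from the one in the paper. The paper does not compare first-return intervals geometrically at all; instead it works entirely with the monodromy invariant $p=\pi_1\circ\pi_0^{-1}$. Since $f$ has a single discontinuity, the initial monodromy is the rotation $p=(k\,\ldots\,d\ 1\,\ldots\,k-1)$, and the paper proves by a short induction that after $N=s+r\le n-1$ steps of Rauzy--Veech (with $s$ type-$0$ and $r$ type-$1$ moves) one has
\[
p^N=(k+s,\ldots,d,\ k-r,\ldots,k+s-1,\ 1,\ldots,k-r-1),
\]
so the combinatorics stays in this ``rotation'' form throughout. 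The stopping time $n$ is then recognized by the condition $p^{n-1}(1)=d$, and one reads off $p^n$ directly. This bypasses any interval-by-interval bookkeeping: the equality $R_2(f)=R_d^n(f)$ drops out of the permutation pattern.

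The advantage of the paper's approach is that the anticipated obstacle you flag --- tracking which letters win and lose along the way, and checking that the $d$-scheme never ``overshoots'' the $2$-scheme interval --- simply dissolves, because the monodromy formula encodes all of it at once. In particular, your tentative description ``before step $n$ the winners are always $\alpha_*$ against letters other than $\alpha_0$'' is not accurate: both type-$0$ and type-$1$ moves occur, and the winners vary; the monodromy formula shows exactly how. Your geometric approach would ultimately have to rediscover this structure (or something equivalent) to close the induction, so the paper's combinatorial shortcut is worth adopting.
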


\dem We are going to prove the Claim A. The proof of the Claim B is similar.
It is easy to see that when  the letter $\al_*$ wins from the letter $\al_0$ for the first time, it wins  with type 0. Using the notation defined in Eq.  \eqref{mono} the Rauzy-Veech algorithm is given by

\begin{eqnarray}
\label{pt0}
& & (p(1),p(2),\ldots, p(s), \ldots, p(d)) \stackrel{0}{\rightarrow} (p(2),\ldots, p(1), p(s),\ldots, p(d) ),
\end{eqnarray}
where $s$ is such that $p(s)=1,$ and

\begin{eqnarray}
\label{pt1}
(p(1),\ldots, p(r), \ldots, p(d)) \stackrel{1}{\rightarrow} (p(1),\ldots, p(r), p(d),\ldots ),
\end{eqnarray}
where $r$ is such that $p(r)=d.$

As by assumption  $f$ has only one discontinuity we have that $p=(k...d \ 1 ... k-1),$ where $\pi_1(\al_1)=k.$

 We assert that iterating the algorithm $N$ times, with  $N=s+r\leq n-1,$ where $n$ is such that the letter $\al_*$ wins from the letter $\al_0$ for the first time, we obtain that
 \begin{eqnarray}
 \label{pN}
 & & p^N=(k+s,\ldots,d,\ k-r, \ldots, k+s-1, \ 1,\ldots,k-r-1),
 \end{eqnarray}
 where $0\leq s\leq d-k$ and $0\leq r \leq k-1$ are such that
 $$s=\#\{\ve_m=0: 0\leq m\leq N\} \ \text{ and } \ r=\#\{\ve_m=1: 0\leq m\leq N\}.$$

 For $N=1$ the assertion is true because $s=0$ and $r=1$ or $s=1$ and $r=0.$ Assume that the formula $\eqref{pN}$ holds for $N-1.$ Then by formulas $\eqref{pt0}$ and $\eqref{pt1}$ the assertion holds for $N.$

We  know that $\ve_n=0$ and that $p^{n-1}(1)=d.$ So

 $$p=p^0=(k...d \ 1 ... k-1)\stackrel{\ve_0}{\rightarrow} \cdots \stackrel{\ve_{n-1}}{\rightarrow} p^{n-1}=(d \ j...d-1 \ 1...j-1).$$

Then $p^n=(j...d-1\ d \ 1...j-1)$ which completes the proof.

\cqd

\begin{cor} Let $f$ be a genus one g.i.e.m with $d$ intervals in $\mathcal{B}_k$ with only one discontinuity. Then there exists a sequence $m_i < m_{i+1}$ such that $m_{i+1}-m_i< d$ and
$$R^i_2(f)=R^{m_i}_d(f).$$

\end{cor}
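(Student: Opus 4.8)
The plan is to bootstrap the Corollary from Proposition~\ref{subseq} by iterating it. Proposition~\ref{subseq} tells us that the first $2$-renormalization $R_2(f)$ coincides with some $d$-renormalization $R_d^{n}(f)$, where $n$ is the index of the first $d$-renormalization at which a specific letter (either $\alpha_*$ beating $\alpha_0$, in case (A), or $\alpha_1$ beating $\alpha_*$, in case (B)) occurs. So first I would set $m_0 = 0$ and, having defined $m_i$ so that $R_2^i(f) = R_d^{m_i}(f)$, apply Proposition~\ref{subseq} to the g.i.e.m. $g := R_d^{m_i}(f) = R_2^i(f)$. Note that $g$ is again a genus one g.i.e.m. with $d$ intervals, lies in $\mathcal{B}_k$ (the classes $\mathcal{B}_k$, and genus one, are invariant under $R_d$ — this is observed in the text), and still has exactly one discontinuity; hence the hypotheses of the proposition hold for $g$. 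Proposition~\ref{subseq} then gives $R_2(g) = R_d^{p}(g)$ for the appropriate first-occurrence index $p$, i.e. $R_2^{i+1}(f) = R_d^{m_i + p}(f)$, so I set $m_{i+1} := m_i + p$. This produces the required sequence with $R_2^i(f) = R_d^{m_i}(f)$ for all $i$, and obviously $m_i < m_{i+1}$ since $p \geq 1$.

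The only remaining point is the bound $m_{i+1} - m_i < d$, i.e. $p < d$. This I would extract from the proof of Proposition~\ref{subseq}: there the relevant index is $N = s + r$ where $s = \#\{\varepsilon_m = 0 : 0 \le m \le N\}$ and $r = \#\{\varepsilon_m = 1 : 0 \le m \le N\}$, subject to the constraints $0 \le s \le d - k$ and $0 \le r \le k - 1$ that come from formula~\eqref{pN} (the monodromy permutation runs out of room to shift once $s$ or $r$ hits these caps, forcing the designated letter to win). Therefore $p = N + 1 \le (d-k) + (k-1) + 1 = d$, and in fact the first occurrence happens strictly before all slots are exhausted, giving $p \le d - 1 < d$. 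I would phrase this as: the winning event of Proposition~\ref{subseq} must occur within at most $d-1$ steps of $d$-renormalization because at each step either the count of type-$0$ steps or the count of type-$1$ steps strictly increases, and each is bounded by $d-k$ and $k-1$ respectively before the designated letter is forced to win.

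The main obstacle is making the bound $p < d$ fully rigorous without re-deriving all of Proposition~\ref{subseq}: one must check that the worst case — where $s$ climbs to $d-k$ and $r$ climbs to $k-1$ before the win — indeed realizes at most $d-1$ intermediate steps, and that case (B) obeys the same count by the symmetry asserted in the proof of the proposition. I would handle this by a short induction on $i$ with the invariant "$R_2^i(f) = R_d^{m_i}(f)$ and $R_d^{m_i}(f)$ is a genus one g.i.e.m. in $\mathcal{B}_k$ with one discontinuity," invoking Proposition~\ref{subseq} at the inductive step and reading off the step-count bound from formula~\eqref{pN}.
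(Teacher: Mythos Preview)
The paper does not supply a separate proof of this Corollary; it is stated immediately after Proposition~\ref{subseq} as a direct consequence, and your inductive scheme --- set $m_0=0$, apply Proposition~\ref{subseq} to $g=R_d^{m_i}(f)$ to obtain $m_{i+1}=m_i+p$, and read the bound on $p$ off the constraints $0\le s\le d-k$, $0\le r\le k-1$ in formula~\eqref{pN} --- is exactly the argument the paper leaves implicit. Your approach is correct and coincides with the intended one.
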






The next result gives us a relationship between $R_{\mathrm{rot}}(f),$ $R_2(f)$ and $R(f).$

\begin{pro}
Let $f$ be a g.i.e.m. such that $\gamma(f)$ is $k-$bounded. Then for all $i\geq0$
$$R_{\mathrm{rot}}^i(f)=R^{k_i}_2(f)=R^{n_{{k}_{i}}}_d(f).$$
\end{pro}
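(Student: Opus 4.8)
The plan is to obtain the identity by chaining two facts that are already in place: the description of the circle renormalization $R_{\mathrm{rot}}$ as the ``type-change acceleration'' of the $2$-renormalization (which is how $R_{\mathrm{rot}}$ was introduced in \textsection\ref{rensec}), and the Corollary just above, which relates the $2$- and $d$-renormalizations step by step. Working under the running assumption of this section that $f$ has a single discontinuity, so that after gluing the endpoints of $I$ the map $f$ is a circle homeomorphism and $R_{\mathrm{rot}}$ makes sense, I would first fix notation: let $(k_i)$ be the sequence with $k_0=0$ and $k_{i+1}$ the first index $n>k_i$ whose $2$-renormalization type differs from the type of the $k_i$-th one, and let $(n_j)$ be the sequence called $(m_j)$ in the Corollary (renamed), so that $R_2^j(f)=R_d^{n_j}(f)$ with $n_{j+1}-n_j<d$. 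The claim $R_{\mathrm{rot}}^i(f)=R_2^{k_i}(f)=R_d^{n_{k_i}}(f)$ is then proved by induction on $i$, the base case $i=0$ being trivial since all three maps are $f$ and $k_0=n_0=0$.

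For the inductive step, assuming $g:=R_{\mathrm{rot}}^i(f)=R_2^{k_i}(f)$ is a $2$-IET of type $\ve$, I would argue that the successive $2$-renormalizations $R_2^{k_i+1}(f),R_2^{k_i+2}(f),\dots$ retain type $\ve$ for exactly as long as the winner letter is unchanged; geometrically each such step removes from the longer of the two intervals one copy of the shorter one, which is precisely one elementary step of the Euclidean algorithm applied to the two lengths, i.e.\ of the continued-fraction algorithm for the rotation number of $g$. Hence the first index at which the type switches is exactly where the next partial quotient is read off, which is one step of the circle renormalization, giving $R_{\mathrm{rot}}^{i+1}(f)=R_2^{k_{i+1}}(f)$. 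The hypothesis that $\gamma(f)$ is $k$-bounded enters here: it forces the partial quotients to be bounded in terms of $k$, so these runs are finite and $(k_i)$ is genuinely well-defined and strictly increasing (indeed with $k_{i+1}-k_i$ bounded). The second equality is then immediate by applying the Corollary at $j=k_i$, namely $R_2^{k_i}(f)=R_d^{n_{k_i}}(f)$, and composing the two identities completes the induction.

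The main obstacle — and really the only point that is not bookkeeping — is the identification used in the inductive step: verifying, with the zoom and rescaling conventions of \textsection\ref{comsec} in mind, that a maximal block of $2$-renormalizations of constant type equals exactly one iterate of $R_{\mathrm{rot}}$. This is precisely the content tacitly invoked when $R_{\mathrm{rot}}$ was described in \textsection\ref{rensec} as ``just an acceleration of the Rauzy--Veech induction'', so the work is to make that statement rigorous: show that for a $2$-IET, iterating $R_2$ with fixed type $\ve$ acts on the length pair by $(|J_A|,|J_B|)\mapsto(|J_A|-|J_B|,|J_B|)$ (or its $A\leftrightarrow B$ mirror) up to an orientation-preserving affine rescaling, so that after the maximal run one recovers exactly the first-return map used in the classical circle renormalization together with the correct new combinatorial data. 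Everything else in the argument — the induction skeleton, the passage through the Corollary, and the use of $k$-boundedness to ensure the sequences $(k_i)$ and $(n_{k_i})$ are well-defined — is routine.
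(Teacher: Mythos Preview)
Your proposal is correct and follows the same two-step logic as the paper's proof: the first equality holds because $R_{\mathrm{rot}}$ was \emph{defined} in \textsection\ref{rensec} as the type-change acceleration of $R_2$, and the second equality is read off from Proposition~\ref{subseq} (or equivalently its Corollary, as you do). The paper's proof is two sentences for exactly this reason; the ``main obstacle'' you flag --- checking that a maximal constant-type block of $R_2$ equals one step of the classical circle renormalization --- is treated as part of the definition rather than something to be proved, so your inductive unpacking, while not wrong, is more than is needed.
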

\dem
The first equality follows by definition of $R_{\mathrm{rot}}$ and $R_2.$ The second equality follows da Proposition \ref{subseq}.
\cqd
\subsection{Bounded Geometry for maps in $\mathcal{B}_k^{2+\nu}$} A classical result on the circle homeomorphisms of class $P$ (absolutely continuous homeomorphisms on the circle with bounded variation derivative) is the following lemma, whose demonstration can be found in Herman \cite{herman}.

\begin{lemma}
\label{denjoy}
Let $f$ be a g.i.e.m. with genus. Let $n_0$ be the first $n$ such that $R^{n_0}_df$ has only one discontinuity and define $n_i$ such that $R_d^{n_i}f = R_{rot}^i(R_d^{n_0}f)$. Then for all $i\geq0$ and $\al\in\A,$
$$\exp(-V)\leq DR^{n_{i}}_df(x)\leq\exp(V) \mbox{~for all~} x\in I^{n_i}_{\al},$$
where $V=\mathrm{Var}(\log Df).$
\end{lemma}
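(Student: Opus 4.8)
The plan is to reduce the estimate to the classical Denjoy distortion inequality for circle homeomorphisms whose log-derivative has bounded variation. First I would pass to such a homeomorphism: since $R^{n_0}_d f$ has a single discontinuity, identifying the two endpoints of its domain $I^{n_0}$ turns it into a homeomorphism $h$ of the circle that extends to a $C^{2+\nu}$ diffeomorphism on the closure of each of its finitely many continuity intervals (so $\log Dh$ has bounded variation) and that has irrational rotation number, because the no-connection property of $f$ is preserved under renormalization. By Proposition~\ref{subseq} and its corollaries, together with the identity $R^{n_i}_d f = R^i_{rot}(R^{n_0}_d f)$, for every $i\ge 0$ the map $R^{n_i}_d f = R^i_{rot}(h)$ is a first-return map of $h$ to a nested interval, and on each base subinterval $I^{n_i}_\alpha$ of its domain it coincides with an iterate $h^{q}$ of $h$ (equivalently, with $f^{q^{n_i}_\alpha}$), where $q$ is one of the return times of $h$, i.e. a denominator of a continued-fraction convergent of $\rho(h)$.

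Second, I would invoke the Denjoy inequality for $h$: for a circle homeomorphism $g$ with irrational rotation number and $\mathrm{Var}(\log Dg) < \infty$ one has $e^{-\mathrm{Var}(\log Dg)} \le Dg^{q}(x) \le e^{\mathrm{Var}(\log Dg)}$ for every $x$ and every continued-fraction denominator $q$ of $\rho(g)$ (Herman~\cite{herman}). Its proof combines the Denjoy--Koksma inequality $|\log Dg^{q}(x) - q\int \log Dg\,d\mu| \le \mathrm{Var}(\log Dg)$, whose combinatorial core is the pairwise disjointness of the $q$ iterates of a suitable fundamental domain (the dynamical partition), with the identity $\int \log Dg\,d\mu = 0$, which follows from $\int_{S^1} Dg^{q}\,dx = 1$ and Jensen's inequality applied to $g$ and to $g^{-1}$. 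Applied to $h$, and recalling that $R^{n_i}_d f$ on $I^{n_i}_\alpha$ is exactly such an iterate $h^{q}$, this gives $e^{-\mathrm{Var}(\log Dh)} \le DR^{n_i}_d f(x) \le e^{\mathrm{Var}(\log Dh)}$ on $I^{n_i}_\alpha$.

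It then remains to replace $\mathrm{Var}(\log Dh)$ by $V = \mathrm{Var}(\log Df)$, i.e. to check $\mathrm{Var}(\log D R^{n_0}_d f) \le \mathrm{Var}(\log Df)$. On the closure of each base interval $\overline{I^{n_0}_\alpha}$ one has $\log D R^{n_0}_d f = \sum_{j=0}^{q^{n_0}_\alpha - 1} \log Df\circ f^{j}$, a sum over the Rokhlin-tower floors $\{f^{j}(I^{n_0}_\alpha)\}$, which together partition $I$ into intervals with pairwise disjoint interiors; hence the part of $\mathrm{Var}(\log D R^{n_0}_d f)$ coming from the interiors of the base intervals is at most $\sum_{F} \mathrm{Var}_{\overline F}(\log Df) \le \mathrm{Var}(\log Df)$, the sum being over all floors $F$. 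The rest — the jumps of $\log D R^{n_0}_d f$ at the interior partition points of its domain — is, by the chain rule, a sum of jumps of $\log Df$ over the orbit points where the itinerary of $f$ changes before the first return to $I^{n_0}$; organizing these through the tower, one checks that they are accounted for by the jumps of $\log Df$ at the interfaces between the tower floors, which are not used in the interior contribution. Altogether $\mathrm{Var}(\log D R^{n_0}_d f) \le V$, and the lemma follows.

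I expect this last step to be the main obstacle: making precise the inequality $\mathrm{Var}(\log D R^{n_0}_d f) \le \mathrm{Var}(\log Df)$ requires a careful bookkeeping that matches, through the dynamics, the finitely many partition points of the renormalized map's domain with the break and partition points of $f$ met along forward orbits before the first return, and verifies that the jumps introduced by renormalization are compensated by the as-yet-unused variation of $\log Df$ at the interfaces among the Rokhlin-tower floors. The combinatorial disjointness of the dynamical-partition atoms underlying Denjoy's inequality is also essential, but it is classical and available from Herman~\cite{herman}; so, after the reduction of the first paragraph, one may alternatively just quote the Denjoy estimate from there.
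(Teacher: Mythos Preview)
Your plan matches what the paper intends: it does not prove this lemma but simply cites Herman, since after passing to the circle homeomorphism $h=R^{n_0}_d f$ the bound $|\log Dh^{q}(x)|\le \mathrm{Var}(\log Dh)$ at continued-fraction denominators $q$ is precisely Herman's Denjoy inequality for class~$P$ homeomorphisms. Your steps~(1)--(2) are therefore exactly the content of the citation. Note also that, a few paragraphs before the lemma, the paper explicitly adopts the standing simplification ``assume that $f$ has only one discontinuity''; under this hypothesis $n_0=0$, $h=f$, and $\mathrm{Var}(\log Dh)=V$, so your step~(3) is vacuous and the lemma is literally Herman's estimate.

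Your step~(3) is only needed if one drops that simplification and allows two discontinuities. There the inequality $\mathrm{Var}(\log D R^{n_0}_d f)\le V$ you sketch is the right target, and your idea --- bounding the interior variation by the sum over tower floors and then matching the jumps of $\log D R^{n_0}_d f$ at the base partition points with jumps of $\log Df$ at the as-yet-uncounted interfaces between adjacent floors --- is sound. Making it rigorous is, as you say, bookkeeping: the jump of the induced log-derivative at a cut point $c$ telescopes into jumps of $\log Df$ along the two orbit segments $\{f^j(c^\pm)\}$ up to first return, and each of these sits at a floor interface. Since the paper's only subsequent use of the lemma (Lemma~\ref{strong}) needs merely a bound of the form $C\cdot V$ with a combinatorial constant, the exact constant~$1$ is in any case inessential for what follows.
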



\begin{lemma}
\label{strong}
Let $f\in\mathcal{B}_k^{2+\mu}$. There is $C_{16}>0$ such that
$$\exp(-C_{16}V)\leq DR_d^nf(x)\leq\exp(C_{16}V) \mbox{~for all~} x\in I^{n}, \ n \in \mathbb{N}.$$
\end{lemma}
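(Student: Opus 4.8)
The plan is to bootstrap from Lemma~\ref{denjoy}, which already furnishes the desired two-sided bound at the distinguished times $n_i$ (the $d$-renormalization times corresponding to the rotation renormalizations of the first one-discontinuity renormalization $R_d^{n_0}f$). What remains is to propagate the estimate from such an $n_i$ to an arbitrary $n$ with $n_i\le n<n_{i+1}$ at a controlled cost, together with a direct treatment of the finitely many $n<n_0$.

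The first ingredient is the elementary remark that one Rauzy-Veech step at most doubles the $C^0$ size of the logarithmic derivative. Indeed, by the formula for $R$ recalled in \S\ref{renrv}, for any renormalizable g.i.e.m.\ $h$ one has $R_d h=h$ or $R_d h=h^2$ on each element of the partition of the domain of $R_dh$; since $\log D(h^2)(x)=\log Dh(h(x))+\log Dh(x)$, this gives
$$\sup_{x}\bigl|\log D(R_d h)(x)\bigr|\ \le\ 2\sup_{x}\bigl|\log Dh(x)\bigr|.$$
Writing $R_d^{\,n_i+j}f=R_d^{\,j}\bigl(R_d^{\,n_i}f\bigr)$ and iterating $j$ times yields
$$\sup_{x}\bigl|\log DR_d^{\,n_i+j}f(x)\bigr|\ \le\ 2^{\,j}\sup_{x\in I^{n_i}}\bigl|\log DR_d^{\,n_i}f(x)\bigr|\ \le\ 2^{\,j}V,$$
the last step being Lemma~\ref{denjoy} (used on each interval of $\mathcal{P}^{n_i}$, which exhausts $I^{n_i}$ up to finitely many points). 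For the finitely many $n<n_0$ one argues in the same way starting from $f$ itself: Jensen's inequality and the fact that $f$ is a bijection of $I$ give $\frac1{|I|}\int_I\log Df\le\log\bigl(|f(I)|/|I|\bigr)=0$, while $\int_IDf=|I|$ forces $\inf_I\log Df\le 0\le\sup_I\log Df$; combined with $\mathrm{Var}_I(\log Df)=V$ this gives $\sup_I|\log Df|\le V$, hence $\sup_{x}|\log DR_d^nf(x)|\le 2^{\,n}V$.

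It remains to see that the exponent $j=n-n_i$ --- and, for the transient, $n_0$ --- stays bounded in terms of $k$ and $d$. Here one uses that $k$-bounded combinatorics forbids the type of the $2$-renormalizations from being constant along a long run of consecutive steps: during a constant-type stretch the winner letter is frozen, so a stretch of length at least $2k$ would prevent some letter from being a winner inside any window of size $k$, contradicting the $k$-bounded condition; together with the Corollary to Proposition~\ref{subseq} ($m_{i+1}-m_i<d$) this bounds $n_{i+1}-n_i$ by some $C_1=C_1(k,d)$. Taking $C_{16}=2^{\max\{C_1,\,n_0\}}$ then gives the claim for every $n$. (If one wants $C_{16}$ to depend only on $k$ and $d$, one must also check that $n_0$ is bounded in terms of $d$ --- a genus one g.i.e.m.\ reaches the one-discontinuity regime after boundedly many Rauzy-Veech steps --- but for the statement as phrased it is enough to let $C_{16}$ absorb the finite transient.)

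The main obstacle is precisely this bounded-gap statement $n_{i+1}-n_i\le C_1(k,d)$; everything else is bookkeeping. Morally, the whole gain of Lemma~\ref{strong} over Lemma~\ref{denjoy} is that $k$-bounded combinatorics keeps the partial quotients of the rotation number --- equivalently, the lengths of the constant-type blocks of the Rauzy-Veech algorithm --- bounded, so that passing from a rotation time to a general renormalization time costs only a bounded power of $2$ in the logarithmic derivative.
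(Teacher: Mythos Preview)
Your approach and the paper's are essentially the same: both invoke Lemma~\ref{denjoy} at the rotation-renormalization times $n_i$ and then bridge to an arbitrary level $n$ with $n_i\le n<n_{i+1}$. The paper phrases the bridge slightly differently --- it asserts that on each $I^n_\alpha$ one has $R_d^nf=(R_d^{n_{k_i}}f)^a$ with $a\le C$, and then applies Lemma~\ref{denjoy} to each of the $a$ factors to get $|\log DR_d^nf|\le CV$ --- whereas you use the cruder per-step doubling estimate $\sup|\log D(R_dh)|\le 2\sup|\log Dh|$, yielding $2^jV$. Since bounding the iterate count $a$ and bounding the gap $j=n-n_i$ are equivalent (the return times grow at most geometrically in the number of Rauzy steps), the two arguments are really the same and give constants of the same order.

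The one step that deserves more care is your justification of the gap bound $n_{i+1}-n_i\le C_1(k,d)$. Your sentence ``during a constant-type stretch the winner letter is frozen'' is literally a statement about the $\{A,B\}$-alphabet of the $2$-renormalization, while the $k$-bounded hypothesis is formulated for the $d$-alphabet $\mathcal{A}$. A constant $2$-type stretch does \emph{not} freeze any single $\mathcal{A}$-winner: the proof of Proposition~\ref{subseq} shows that within one $2$-step both $d$-types (hence several $\mathcal{A}$-winners) occur. So the contradiction with $k$-boundedness is not immediate from what you wrote; one must identify a specific $\mathcal{A}$-letter that is excluded from winning (or from losing) throughout such a stretch, and this requires tracking the combinatorics of Proposition~\ref{subseq} across several consecutive $2$-steps of the same type. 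The conclusion you want is correct, and the paper's own proof simply asserts the analogous bound on $a$ without further detail, so you are not behind the paper in rigor --- but the one-line reason you give is not quite the right mechanism.
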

\dem Because $f$ has $k$-bounded combinatorics, there exists $C$ with the following property: Let $i\geq0$ be  such that $n_{{k}_{i}}\leq n<n_{{k}_{i+1}}.$ Then for every $\alpha \in \mathcal{A}$ there exists $a \leq C$ such that $(R^n_d f)(x)=(R^{n_{{k}_{i}}}_d f)^a(x)$, for $x \in I^n_\alpha$. Now the lemma  follows from Lemma \ref{denjoy}. \cqd

\begin{lemma}[Non-Colapsing Domains]
\label{domain}
Let $f\in\mathcal{B}_k^{2+\mu}.$ There is  $C_{17}>1$ such that
$$\dfrac{1}{C_{17}}\leq \dfrac{|I_{\al}^n|}{|I_{\beta}^n|}\leq C_{17}, \mbox{~~for all~~} \al,\beta\in\A \mbox{~and~} n\geq0.$$

\end{lemma}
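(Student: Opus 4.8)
The plan is to reduce the estimate to the bounded distortion already available from Lemma~\ref{strong}, combined with the $k$-bounded combinatorics. First I would fix $n$ and recall that the intervals $\{I^n_\alpha\}_{\alpha\in\A}$ tile $I^n$, so it suffices to bound the ratio of the two largest-to-smallest members, and since $\A$ has only $d$ elements it is enough to show that for any two letters $\alpha,\beta$ the ratio $|I^n_\alpha|/|I^n_\beta|$ is bounded by a constant depending only on $d$, $k$ and $V$. The key observation is that $I^n_\alpha$ returns to $I^n$ under $\hat R^n_d(f)=f^{q^n_\alpha}$, and $q^n_\alpha$ is the return time; by the $k$-bounded combinatorics (and the proof of Lemma~\ref{strong}) the return times $q^n_\alpha$ are, roughly, comparable across $\alpha$, and more importantly the forward orbit pieces $I^n_\alpha, f(I^n_\alpha),\dots, f^{q^n_\alpha-1}(I^n_\alpha)$ tile $I^0=I$ as $\alpha$ ranges over $\A$.

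Concretely, I would argue as follows. The levels $n_{k_i}\le n<n_{k_i+1}$ from Lemma~\ref{strong} give a rotation-type renormalization $R^{n_{k_i}}_d f$ with one discontinuity, and each $I^n_\alpha$ is one of a uniformly bounded number (at most $C=C(k)$) of dynamical intervals of that rotation renormalization, pushed forward a bounded number of times with uniformly bounded distortion by Lemma~\ref{strong}. So it is enough to prove non-collapsing for the circle homeomorphism $g:=R^{n_{k_i}}_d f$ at the level of its own renormalization intervals: that two consecutive dynamical intervals of a renormalization of an irrational-rotation-type circle homeomorphism with bounded derivative distortion have comparable lengths. This is the classical bounded-geometry statement for circle maps of class $P$, and it follows from the Denjoy-type estimate in Lemma~\ref{denjoy}: if $J$ and $J'$ are adjacent intervals of the partition generated by $g$ at some renormalization step, with $J'=g^{m}(J)$ for some $m$ bounded by the (bounded) combinatorics, then $|J'|/|J| = Dg^m(\xi)$ for some $\xi\in J$, which is between $\exp(-V)$ and $\exp(V)$. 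Chaining at most $d\cdot C(k)$ such comparisons yields the desired $C_{17}=C_{17}(d,k,V)$.

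The main obstacle I anticipate is bookkeeping rather than analysis: one must make precise the claim that the $d$-renormalization intervals $I^n_\alpha$ at an arbitrary level $n$ are obtained from the renormalization intervals of the nearest rotation renormalization $R^{n_{k_i}}_d f$ by a controlled number of steps of the Rauzy--Veech algorithm, using genus one and the structure from Proposition~\ref{subseq} and its corollary (which gives $m_{i+1}-m_i<d$), and that along the way both the combinatorial depth and the distortion stay bounded. Once that correspondence is set up, the inequality is immediate from Lemmas~\ref{denjoy} and~\ref{strong} together with the fact that the relevant orbit segments exactly tile $I$, so no interval can be much shorter than the average length $|I|/(\text{bounded number of pieces})$ nor much longer. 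I would therefore spend most of the write-up on that reduction and only a line or two on the final chain of inequalities.
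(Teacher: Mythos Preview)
Your approach is genuinely different from the paper's, and the difficulties you flag as ``bookkeeping'' are in fact the whole content of the lemma. The paper does \emph{not} reduce to the circle-map case; it argues directly at the Rauzy--Veech level via two short contradiction arguments. First, using Lemma~\ref{strong} it shows that the winner's interval at level $n$ cannot occupy more than a fixed fraction $\delta<1$ of $|I^n|$, since otherwise all other intervals would be so small that the same letter would win for $k$ consecutive steps, violating $k$-boundedness; this yields $|I^{n+1}|\ge (1-\delta)|I^n|$. Second, the transitive winner/loser chain guaranteed by $k$-bounded combinatorics gives $|I^{n+k}_\beta|\le e^{2kC_{16}V}|I^n_\alpha|$ for \emph{every} pair $\alpha,\beta$; if some $|I^n_\alpha|$ were too small relative to $|I^n|$, summing over $\beta$ would force $|I^{n+k}|<(1-\delta)^k|I^n|$, contradicting the first step.

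Your proposed mechanism has two real gaps. The claim that adjacent partition intervals satisfy $J'=g^m(J)$ for some $m$ bounded by the combinatorics is unjustified: for $g=R^{n_{k_i}}_d f$ and adjacent $I^n_\alpha,I^n_\beta\subset I^n$, there is no reason for one to be a bounded $g$-iterate of the other (and $g^m(I^n_\alpha)$ need not even lie in $I^n$); Lemma~\ref{denjoy} only bounds the derivative of the first-return map, it does not manufacture such a relation. Second, the closing sentence --- ``no interval can be much shorter than the average length $|I|/(\text{bounded number of pieces})$'' --- proves nothing: a tiling controls the average, not each piece, and the number of pieces in $\mathcal P^n$ is unbounded anyway. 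What actually prevents collapse is a mechanism linking the size of every $I^n_\alpha$ to every other within boundedly many renormalization steps; that mechanism is precisely the winner/loser chain from $k$-boundedness, and once you invoke it you are carrying out the paper's direct argument rather than a reduction to the classical circle case. Note also that you never verify that $k$-bounded combinatorics for the $d$-interval map implies bounded type for the underlying circle homeomorphism, which your ``classical bounded-geometry'' step tacitly requires.
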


\dem Note that by Lemma \ref{strong}  we have that for all $\alpha \in \mathcal{A}$
\begin{equation} \label{compare} \exp(-C_{16}V)\leq\frac{|R^n(f)(I^n_\alpha)|}{|I^n_\alpha|}\leq\exp(C_{16}V).\end{equation}
We claim that if the letter $\alpha_\star$ is the winner in the $n-$level  then
$$|I^n_{\alpha_\star}|, |R^n(f)(I^n_{\alpha_\star})| \leq \frac{k+1-\exp(-(2^k+1)C_{16}V)}{k+1} |I^n|.  $$
Indeed, otherwise by (\ref{compare})
$$|I^n_{\alpha}|, |R^n(f)(I^n_{\alpha})|<  \frac{\exp(-(2^k+1)C_{16}V)|I^n|}{k+1}< \frac{\min \{ |I^n_{\alpha_\star}|, |R^n(f)(I^n_{\alpha_\star})|\}}{\exp(2^kC_{16}V)k}.$$
for every $\alpha\neq \alpha_\star$. As a consequence the letter $\alpha_\star$ will be the winner for at least $k$ consecutive times, which contradicts $f \in \mathcal{B}_k$. So there exists $\delta \in (0,1)$ such that

\begin{equation}\label{dec_int}\frac{|I^{n+1}|}{|I^n|}\geq 1-\delta\end{equation}
for every $n$. Note that by (\ref{compare}) we have
$$|I^{n+1}_\alpha|\leq \exp(C_{16}V) |I^n_\alpha|,$$
for every $n$ and $\alpha$. Moreover if $\alpha_\star$ is the winner and $\beta_\star$ is the loser at the $n$th level we have
$$|I^{n+1}_{\beta_\star}|\leq \exp(2C_{16}V) |I^{n}_{\alpha_\star}|.$$

So fix $\alpha, \beta \in \mathcal{A}$. Since $\beta$ loses transitively from $\alpha$ after at most $k$ renormalizations, we obtain
\begin{equation} \label{est_trans} |I^{n+k}_\beta|\leq \exp(2kC_{16}V) |I^{n}_\alpha|,\end{equation}
for every $n$, $\alpha$ and $\beta$. We claim that
$$ |I^{n}_\alpha| \geq \frac{(1-\delta)^k\exp(-2kC_{16}V) }{d} |I^n|.$$
Indeed, otherwise by (\ref{est_trans})
$$|I^{n+k}|=\sum_\beta |I^{n}_\beta|\leq (1-\delta)^k |I^n|,$$
which contradicts (\ref{dec_int}).
 \cqd

\begin{lemma}[Non-Colapsing Images]
\label{image}
Let $f\in\mathcal{B}_k^{2+\mu}.$ There is a constant $C_{18}>1$ such that
$$\dfrac{1}{C_{18}}\leq \dfrac{|R^n(f)(I_{\al}^n)|}{|R^n(f)(I_{\beta}^n)|}\leq C_{18}, \mbox{~~for all~~} \al,\beta\in\A \mbox{~and~} n\geq0.$$
\end{lemma}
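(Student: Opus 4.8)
The plan is to obtain the non-collapsing of the images as a direct consequence of the non-collapsing of the domains (Lemma \ref{domain}) together with the uniform distortion control provided by Lemma \ref{strong}. The key observation is that, on the whole interval $I^n$, the derivative $DR^n(f)$ is pinched between two positive constants depending only on $V=\mathrm{Var}(\log Df)$ (and hence, via the bounded combinatorics, only on $k$ and $f$), so that the length of the image $R^n(f)(I^n_\al)$ is comparable to the length of the domain $I^n_\al$ with multiplicative constants independent of $\al$ and $n$.

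First I would write, for each $\al\in\A$,
$$|R^n(f)(I^n_\al)|=\int_{I^n_\al}DR^n(f)(x)\,dx$$
and apply Lemma \ref{strong} to get
$$\exp(-C_{16}V)\,|I^n_\al|\leq |R^n(f)(I^n_\al)|\leq \exp(C_{16}V)\,|I^n_\al|,$$
which is nothing but the estimate \eqref{compare} already used in the proof of Lemma \ref{domain}. Then, dividing the corresponding inequalities for two letters $\al,\beta\in\A$ and invoking Lemma \ref{domain}, I would conclude
$$\frac{|R^n(f)(I^n_\al)|}{|R^n(f)(I^n_\beta)|}\leq \exp(2C_{16}V)\,\frac{|I^n_\al|}{|I^n_\beta|}\leq \exp(2C_{16}V)\,C_{17},$$
and the reverse bound follows by exchanging the roles of $\al$ and $\beta$; hence $C_{18}=\exp(2C_{16}V)\,C_{17}$ does the job.

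I do not expect any real obstacle here: the statement is essentially a formal corollary of the two preceding lemmas. The only subtlety worth keeping in mind is that Lemma \ref{strong} gives a distortion bound for $DR^n(f)$ valid on the entire interval $I^n$ rather than merely on a single atom $I^n_\al$, which is exactly what makes the resulting constant uniform in both $\al$ and $n$; no further use of the bounded-combinatorics or genus-one hypotheses is needed beyond what is already encoded in the constants $C_{16}$ and $C_{17}$.
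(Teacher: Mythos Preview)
Your proof is correct and matches the paper's approach exactly: the paper's proof simply states that the lemma follows directly from Lemmas \ref{strong} and \ref{domain}, and you have spelled out precisely this argument. The explicit constant $C_{18}=\exp(2C_{16}V)\,C_{17}$ you obtain is the natural one.
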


\dem Follows directly from Lemmas \eqref{strong} and \eqref{domain}.
\cqd

\begin{pro}
\label{corte}
Let $f\in\mathcal{B}_k^{2+\mu}$ and let $\al,\beta\in\A$ be the winner and loser letters of $R^n(f),$ respectively. Then there is $0<\la_1<\la_2<1,$ such that
\begin{equation} \label{quo} \la_1<\dfrac{|(R^nf)^{1-\varepsilon}(I_{\beta}^n)|}{|(R^nf)^{\varepsilon}(I_{\al}^n) |}<\la_2,\end{equation}
where $\varepsilon\in\{0,1\}$ is the type of $R^n(f).$
\end{pro}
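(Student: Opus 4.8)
The plan is to control the quotient in \eqref{quo} by combining the bounded distortion estimates already established (Lemmas \ref{strong}, \ref{domain} and \ref{image}) with the definition of the Rauzy--Veech step and the $k$-bounded combinatorics. Fix $n$ and suppose, say, that $R^n(f)$ has type $\varepsilon=0$, with winner $\alpha=\alpha_n(0)$ and loser $\beta=\alpha_n(1)$; the type $1$ case is symmetric after exchanging the roles of domain and image, using Lemma \ref{image} in place of Lemma \ref{domain}. By the description of the renormalization step, $R^{n+1}(f)$ is the first return map of $R^n(f)$ to $I^{n+1}=I^n\setminus R^n(f)(I^n_{\alpha(1)})$, and the new partition interval associated to the winner satisfies $I^{n+1}_{\alpha(0)}=I^n_{\alpha(0)}\setminus R^n(f)(I^n_{\alpha(1)})$, so that
$$|I^n_{\alpha(0)}| \;=\; |I^{n+1}_{\alpha(0)}| + |R^n(f)(I^n_{\alpha(1)})|.$$
Hence the quotient in \eqref{quo}, namely $|R^n(f)(I^n_\beta)|/|I^n_\alpha|$, equals $1-|I^{n+1}_{\alpha(0)}|/|I^n_{\alpha(0)}|$. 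Thus both bounds in \eqref{quo} are equivalent to a two-sided bound $c_1 \le |I^{n+1}_{\alpha(0)}|/|I^n_{\alpha(0)}| \le c_2$ with $0<c_1<c_2<1$, i.e. the winning interval neither collapses nor fails to shrink at a definite rate.

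The upper bound $|I^{n+1}_{\alpha(0)}|/|I^n_{\alpha(0)}|\le c_2<1$ is essentially the content of the first claim inside the proof of Lemma \ref{domain}: since $f\in\mathcal{B}_k$ the letter $\alpha(0)$ cannot win $k$ times in a row, and this forces $|R^n(f)(I^n_{\alpha(1)})|$ to be bounded below by a definite fraction of $|I^n_{\alpha(0)}|$ (otherwise, using the bounded distortion \eqref{compare} and Non-Colapsing Domains, the winner would remain the winner for $k$ consecutive steps, a contradiction). I would quote \eqref{dec_int} and the displayed estimate $|I^n_{\alpha_\star}|\le \frac{k+1-\exp(-(2^k+1)C_{16}V)}{k+1}|I^n|$ from that proof, combined with $|R^n(f)(I^n_{\alpha(1)})|\ge \exp(-2kC_{16}V)\,\delta'|I^n|$ for a definite $\delta'>0$ coming from Lemmas \ref{strong} and \ref{domain} and the $k$-bounded hypothesis applied to the loser $\alpha(1)$. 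For the lower bound $|I^{n+1}_{\alpha(0)}|/|I^n_{\alpha(0)}|\ge c_1>0$, I would argue that $R^n(f)(I^n_{\alpha(1)})\subsetneq I^n_{\alpha(0)}$ is a \emph{proper} subinterval with a definite gap: by Non-Colapsing Domains and Non-Colapsing Images, $|I^n_{\alpha(0)}|\ge C_{17}^{-1}|I^n_{\alpha(1)}|$ and, again by the $k$-bounded combinatorics (the loser is eventually compensated so that $I^{n+1}_{\alpha(0)}$ carries a first return with bounded time), $|I^{n+1}_{\alpha(0)}|$ is comparable to $|I^{n+1}|\ge(1-\delta)|I^n|$ up to constants depending only on $k$, $C_{16}V$ and $d$.

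The main obstacle is the lower bound $|I^{n+1}_{\alpha(0)}|\ge c_1|I^n_{\alpha(0)}|$: a priori the image $R^n(f)(I^n_{\alpha(1)})$ could fill up almost all of $I^n_{\alpha(0)}$, and ruling this out is exactly a ``next step has bounded combinatorics'' phenomenon rather than a pure distortion estimate. I would handle it by observing that after the type-$0$ step the new loser at level $n+1$ (which, by the Rauzy graph structure for genus one, is determined and must differ appropriately) again loses transitively within $k$ steps, so applying the inequality \eqref{est_trans}-type argument backwards bounds $|I^{n+1}|/|I^{n+1}_{\alpha(0)}|$ by a constant depending only on $k$, $d$ and $V$; then \eqref{dec_int} gives $|I^{n+1}_{\alpha(0)}|\ge \text{const}\cdot|I^n|\ge \text{const}\cdot|I^n_{\alpha(0)}|$ using Non-Colapsing Domains once more. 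Collecting the constants, one sets $\lambda_1=c_1$ and $\lambda_2=c_2$, both functions of $k$ and $V=\mathrm{Var}(\log Df)$ alone, which finishes the proof.
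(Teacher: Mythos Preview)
Your reduction of \eqref{quo} (for type $\varepsilon=0$) to a two-sided bound on $|I^{n+1}_{\alpha(0)}|/|I^n_{\alpha(0)}|$ is correct and is exactly the identity the paper uses, namely
\[
|(R^{n+1}f)^\varepsilon(I^{n+1}_\alpha)|=|(R^nf)^\varepsilon(I^n_\alpha)|-|(R^nf)^{1-\varepsilon}(I^n_\beta)|.
\]
However, you make the argument considerably harder than necessary. The paper's proof is a two-line application of the Non-Collapsing lemmas, which are already proven for \emph{every} level: if the quotient in \eqref{quo} is too close to $0$ then $|(R^nf)^{1-\varepsilon}(I^n_\beta)|$ is small relative to $|I^n|$, contradicting Lemma~\ref{domain} or Lemma~\ref{image} at level $n$; if the quotient is too close to $1$ then $|(R^{n+1}f)^\varepsilon(I^{n+1}_\alpha)|$ is small relative to $|I^{n+1}|$, contradicting Lemma~\ref{domain} or Lemma~\ref{image} at level $n{+}1$. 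That is the whole proof.

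The part you flag as ``the main obstacle'' --- namely $|I^{n+1}_{\alpha(0)}|\ge c_1|I^n_{\alpha(0)}|$ --- is therefore not an obstacle at all: Lemma~\ref{domain} applied at level $n{+}1$ gives $|I^{n+1}_{\alpha(0)}|\ge (d\,C_{17})^{-1}|I^{n+1}|$, and \eqref{dec_int} together with $|I^n_{\alpha(0)}|\le |I^n|$ finishes it. Your attempt to re-run the $k$-bounded combinatorics argument (``the loser is eventually compensated'', ``applying the \eqref{est_trans}-type argument backwards'') is redundant, since that combinatorics has already been absorbed into the uniform constants $C_{17}$, $C_{18}$ of Lemmas~\ref{domain} and~\ref{image}. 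Similarly, for the other direction you do not need to revisit the ``winner cannot win $k$ times in a row'' claim from the proof of Lemma~\ref{domain}; you can simply quote Lemma~\ref{image} (type $0$) or Lemma~\ref{domain} (type $1$) at level $n$. Your plan is not wrong, but it re-proves lemmas you already have; the paper's route is strictly shorter and cleaner.
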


\dem  If the quotient in (\ref{quo}) is too close to $0$ then $(R^nf)^{1-\varepsilon}(I_{\beta}^n)$ is very small compared to $I^n$, which contradicts either Lemma \ref{domain} or Lemma  \ref{image}.   If the quotient in (\ref{quo}) is too close to $1$ then  $|(R^{n+1}f)^\varepsilon(I_{\al}^{n+1})|=|(R^nf)^{\varepsilon}(I_{\al}^n)| -|(R^nf)^{1-\varepsilon}(I_{\beta}^n)|$ is very small compared to $I^{n+1}$, what again contradicts either Lemma \ref{domain} or Lemma \ref{image}.
\cqd

By definition of renormalization operator we know that
$$[0,1)=\bigvee_{\al\in\A}~\bigvee_{i=0}^{q_n^{\al}-1}f^i(I^n_{\al}),$$
where $\bigvee$ means disjoint union. Thus the elements $f^i(I^n_{\al})$ for all $\al\in\A$ and for all $0\leq i\leq q_n^{\al}-1$ form a partition which we denote by $\mathcal{P}^n.$ The norm of  $\mathcal{P}^n$ is defined by

$$|\mathcal{P}^n|=\max_{ {\al\in\A} \atop {0\leq i\leq q_n^{\al}-1}} ^{} \{|f^i(I^n_{\al})|\}.$$

The next result says that $|\mathcal{P}^n|$ tends to zero exponentially fast.

\begin{pro}
\label{pvz}
Let $f\in\mathcal{B}_k^{2+\nu}.$ Then for n sufficiently large there is  $\la=\la(\la_1,\la_2)$ with $0<\la<1$ such that
$$|\mathcal{P}^{n+k}|\leq\lambda\cdot |\mathcal{P}^n|.$$
\end{pro}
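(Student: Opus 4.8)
The plan is to show that after $k$ renormalization steps the partition $\mathcal{P}^n$ is refined in a way that forces its norm to shrink by a definite factor. The key structural observation is that every element of $\mathcal{P}^{n+k}$ is contained in an element of $\mathcal{P}^n$, so it suffices to show that the longest atom of $\mathcal{P}^n$ cannot survive intact $k$ steps later: at some step between $n$ and $n+k$ the interval $I^n_\al$ realizing the maximum must be cut, and by Proposition \ref{corte} the cut removes a definite proportion of the relevant length. More precisely, fix $n$ large and let $f^j(I^n_\al)$ be an atom of $\mathcal{P}^n$ with $|f^j(I^n_\al)| = |\mathcal{P}^n|$. First I would note that the atoms of $\mathcal{P}^{n+1}$ are obtained from those of $\mathcal{P}^n$ either by leaving an atom untouched, or by subdividing the tower over the winner/loser letters: when $R^n(f)$ has type $\ve$ and winner $\al(\ve)$, loser $\al(1-\ve)$, the domain interval $I^n_{\al(\ve)}$ is split into $I^{n+1}_{\al(\ve)}$ and a piece that is glued (via $f$) onto the top of the other tower, and correspondingly each floor $f^i(I^n_{\al(\ve)})$ of the winner's tower is split according to the same proportion, while the loser's tower gains extra floors. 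Hence, tracking lengths, $|\mathcal{P}^{n+1}| \le |\mathcal{P}^n|$ always, and the only way to get a strict decrease is through a cut.

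The core of the argument is then a combinatorial-geometric dichotomy. By the $k$-bounded combinatorics hypothesis, for the letter $\al$ there is some step $n_1$ with $|n-n_1|<k$ at which $\al$ wins, and more importantly, by the transitivity condition in the definition of $k$-bounded combinatorics every letter loses (transitively, so every tower gets touched) within a window of size $k$. Concretely: within the $k$ steps from $n$ to $n+k$, the letter $\al$ is the loser of some renormalization $R^m(f)$, $n\le m<n+k$ — because otherwise $\al$ would never lose in a window of length $k$, contradicting $f\in\mathcal{B}_k$. At that step the tower over $I^m_\al$ is stacked on top of the winner's tower, and the geometry of the stacking (controlled by Lemmas \ref{strong}, \ref{domain}, \ref{image} and decisively by Proposition \ref{corte}) shows that the lengths of the resulting atoms at level $m+1$, hence at level $n+k$, are at most $\lambda_2$ times the lengths they would have had. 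Combining: for any atom $f^j(I^n_\al)$ of $\mathcal{P}^n$, every atom of $\mathcal{P}^{n+k}$ contained in it has length at most $\max\{\lambda_2,\ \exp(2kC_{16}V)\cdot(\text{something strictly}<1)\}\cdot |\mathcal{P}^n|$; taking $\la\in(0,1)$ to be this maximum (and using that the estimate is uniform in $n$ for $n$ large, by bounded geometry) gives $|\mathcal{P}^{n+k}|\le \la\,|\mathcal{P}^n|$.

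The main obstacle is the bookkeeping of precisely \emph{which} atom of $\mathcal{P}^{n+k}$ can be the largest, and ensuring the decrease factor does not degrade as towers get tall. One must argue that an atom $f^j(I^n_\al)$ of $\mathcal{P}^n$ is a disjoint union of atoms $f^{j'}(I^{n+k}_\gamma)$ of $\mathcal{P}^{n+k}$ (so the maximum over the refined partition restricted to this atom is genuinely bounded by the atom's own length), and then that within the $k$ steps either $I^n_\al$ itself is cut with a definite proportion removed from the surviving part — giving the $\lambda_2$ factor via Proposition \ref{corte} applied at the relevant winner/loser pair — or else the atom $f^j(I^n_\al)$, while not the winner's or loser's tower at any of these $k$ steps, still cannot be the longest atom at level $n+k$ because the (uniformly many, by $k$-boundedness) new floors added to the loser towers have comparable size by Lemma \ref{image}, so the maximal atom is forced to come from a tower that \emph{was} cut. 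The uniform constants $C_{16},C_{17},C_{18},\lambda_1,\lambda_2$ from the preceding lemmas are exactly what makes this dichotomy close with a single $\la=\la(\lambda_1,\lambda_2)<1$ independent of $n$.
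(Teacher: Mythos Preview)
Your argument contains a genuine error in the role of winning versus losing. You claim that when the letter $\alpha$ \emph{loses} at some level $m\in[n,n+k)$, ``the lengths of the resulting atoms at level $m+1$ \ldots are at most $\lambda_2$ times the lengths they would have had.'' This is false. When $\alpha$ loses, the tower over $\alpha$ becomes taller (it acquires new floors carved out of the old winner's tower), but the \emph{old} floors $f^i(I^m_\alpha)$ survive unchanged as atoms of $\mathcal{P}^{m+1}$: in type $0$ one has $I^{m+1}_\alpha=I^m_\alpha$ outright, and in type $1$ an iterate of $f$ carries $I^{m+1}_\alpha$ onto $I^m_\alpha$, so each old floor is exactly a floor of the new, taller tower. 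The mechanism that actually shrinks atoms is the opposite one: an atom of $\mathcal{P}^m$ is split into two strictly smaller pieces precisely when its current letter is the \emph{winner} at level $m$. Your fallback branch (``the atom \ldots still cannot be the longest at level $n+k$ because \ldots the maximal atom is forced to come from a tower that was cut'') is also untenable as stated: if $f^j(I^n_\alpha)$ survives intact to level $n+k$ then it \emph{is} an atom of $\mathcal{P}^{n+k}$ of length $|\mathcal{P}^n|$, so $|\mathcal{P}^{n+k}|\ge |\mathcal{P}^n|$, and no comparison with other towers helps.

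The paper proceeds differently and avoids this trap. Rather than fixing the maximal atom of $\mathcal{P}^n$ and tracking the letter $\alpha$ forward, it takes an \emph{arbitrary} atom of $\mathcal{P}^{n+k}$ and records the chain of containing atoms
\[
f^{i_n}(I^n_{\alpha_n})\supset f^{i_{n+1}}(I^{n+1}_{\alpha_{n+1}})\supset\cdots\supset f^{i_{n+k}}(I^{n+k}_{\alpha_{n+k}}).
\]
Either the letters $\alpha_n,\ldots,\alpha_{n+k}$ are all equal, in which case $k$-boundedness forces that common letter to win at some intermediate level $j_0$; or the sequence changes at some $j_0$, and since only atoms in the winner's tower get subdivided, $\alpha_{j_0}$ must be the winner there. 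In either case Proposition~\ref{corte} together with Lemma~\ref{strong} bounds $|f^{i_{j_0+1}}(I^{j_0+1}_{\alpha_{j_0+1}})|/|f^{i_{j_0}}(I^{j_0}_{\alpha_{j_0}})|$ by a constant $\lambda<1$, which gives the proposition. Your outline could be repaired by replacing ``loses'' with ``wins'' and by tracking the letter of the \emph{current containing atom} rather than the fixed letter $\alpha$, but once you do that you have essentially reconstructed the paper's argument.
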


\dem Let $f^{i_{n+k}}(I^{n+k}_\alpha)\in\mathcal{P}^{n+k}.$ There are $\al_j\in\A$ and $0\leq i_j\leq q_{j}^{\alpha_j}-1$ for $n\leq j\leq n+k$ such that
$$f^{i_n}(I^n_{\al_n})\supset \cdots\supset f^{i_j}(I^{j}_{\al_{j}}) \supset f^{i_{j+1}}(I^{j+1}_{\al_{j+1}})\cdots    \supset f^{i_{n+k}}(I^{n+k}_{\al_{n+k}}).$$

We claim that there exists $j_0$ such that   $\alpha_{j_0}$ is the winner in the $j_0$th level. Indeed if $\alpha=\alpha_n=\cdots=\alpha_{n+k}$, let $j_0$ be a level between levels $n$ and $n+k$ such that $\alpha$ wins. Such level exists because $f \in \mathcal{B}_k$.  Otherwise there exists $j_0$ such that $\alpha_{j_0+1}\neq \alpha_{j_0}$.  This is only possible if $\alpha_{j_0}$ is the winner and $\alpha_{j_0+1}$ the loser  in the $j_0$-th level.  By Proposition \ref{corte}  and Lemma \ref{strong} we have

$$\frac{|f^{i_{n+k}}(I^{n+k}_{\al_{n+k}})|}{|f^{i_n}(I^n_{\al_n})|} \leq \frac{|f^{i_{j_0+1}}(I^{j_0+1}_{\al_{j_0+1}})|}{|f^{i_{j_0}}(I^{j_0}_{\al_{j_0}})| }\leq \lambda < 1.$$
for some $\lambda$ that depends only on $V=Var(\log Df)$, $\lambda_1$ and $\lambda_2$.
\cqd

\begin{proof}[Proof of Theorem \ref{teo1}.]  Note that
$$Z_{I^n_\alpha}(R^n(f))=       Z_{f^{q_{\alpha}^n-1}(I^n_\alpha)}(f) \circ        \cdots     \circ   Z_{f(I^n_\alpha)}(f) \circ  Z_{I^n_\alpha}(f). $$
The intervals $f^{i}(I^n_\alpha)$, $i=0, \dots, q^n_\alpha-1$ belong to the partition $\mathcal{P}^n$. In particular
$$\sum_{i=0}^{q^n_\alpha-1}  |f^{i}(I^n_\alpha)| \leq 1 $$
and by Proposition \ref{pvz}
$$\sup_{0\leq i < q^n_\alpha}  |f^{i}(I^n_\alpha)| \leq |\mathcal{P}^n|\leq \lambda^{n/k-1}.$$
So we can apply  Proposition \ref{composicao} to obtain that
$$|Z_{I^n_\alpha}(R^n(f)) - M_1^n|_{C^2}\leq C_3 \cdot \lambda^{\mu(n/k-1)}.$$
Recall that $M_a\circ M_b=M_{a+b}.$  So
$$M_1^n=M_N,$$
where (see (\ref{zoomnon}) and (\ref{noncomp}) )
$$N = \sum_{i=0}^{q^n_\alpha-1} N_{ Z_{f^i(I^n_\alpha)}(f)}= \sum_{i=0}^{q^n_\alpha-1} \int_{f^i(I^n_\alpha)} \frac{D^2 f(x)}{Df(x)} \ dx= \int_{I^n_\al}\frac{D^2 R^n(f)(x)}{D R^n(f)(x)}dx  \quad \forall \al\in\A.$$
\end{proof}
\section{Symbolic representation}\label{synsec}
To prove Theorem 2 and 3 we need a finer understanding of the geometry of the partitions generated by the sequence of renormalizations. To this end we will introduce a certain symbolic representation for the dynamics, that is somehow a generalization of the symbolic representation introduced by Sinai and  Khanin \cite{khanin1}. Consider the set of {\it letters}
$$\mathcal{L}= \{   (\alpha,\ve,n) \ s.t. \ \alpha \in \mathcal{A}, \ \ve \in \{  0,1\}, \ n \in \mathbb{N} \}.$$
Define $\pi_3(\alpha,\ve,n)=n$, $\pi_2(\alpha,\ve,n)=\ve$. We will use the notation $a_i$ for $a_i \in \mathcal{L}$ such that $\pi_n(a_i)=i$.

In this section we construct a symbolic representation for the dynamics  of a g.i.e.m. $f\in \mathcal{B}_k$. For each $n$ we consider the partition of $[0,1]$ given by
$$ \tilde{\mathcal{P}}^n=\{   f^i(I^n_{\al}) \ s.t. \  \al\in\A \ and \  1\leq i\leq q_n^{\al} \}.$$
Let $$\Lambda=  [0,1]\setminus \cup_n\cup_{J \in \tilde{\mathcal{P}}^n} \partial J.$$
We will define a function
$$s\colon \Lambda \rightarrow \mathcal{L}^{\mathbb{N}},$$
in the following way. We have $s(x)=(a_i)_{i\in  \mathbb{N}}$, $a_i \in \mathcal{L}$. If $i=0$ then $x \in f(I^0_\beta)$ for some $\beta$  and we define $a_0=(\beta,0,0)$.  If $i > 0$, let
$$k_{i-1}(x):= \min \{ k \geq 0\ s.t. \ f^k(x) \in I^{i-1}  \}.$$
Then either  $f^{k_{i-1}}(x) \in I^i$, so $f^{k_{i-1}}(x) \in f_i(I_\beta^i)$ for some $\beta$ and we define  $a_i=(\beta,0,i)$, or $f^{k_{i-1}}(x) \in I^{i-1}\setminus I^i$, so $f_{i-1}(f^{k_{i-1}}(x)) \in  f_i(I_\beta^i)$ for some $\beta$ and we define  $a_i=(\beta,1,i)$. Note that in any case $f^{k_i(x)}(x) \in I^i_\beta$ and $\pi_2(a_i)=0$ if and only if $k_i(x)=k_{i-1}(x)$.

\subsection{Admissible cylinders and its properties} Given a finite subset $S=\{n_1,n_2, \dots, n_k\}\subset \mathbb{N}$, with $\#S=k$ and  a finite sequence $a_{n_1},\dots,a_{n_k} \in \mathcal{L}$ , with  $\pi_3(a_{n_i})=n_i$ we can consider the {\it word} $$\omega= a_{n_k}a_{n_{k-1}}\dots a_{n_1}.$$

For each word we can define the cylinder
$$[\omega] = [a_{n_k}a_{n_{k-1}}\dots a_{n_1}]=\overline{\{x \in \Lambda \ s.t. \ s_{n_i}(x)=a_{n_i}, \ 1\leq i\leq k   \}}.$$
If this cylinder is not empty we will say that the word $\omega$ is {\it admissible}.  Indeed we can give a definition of admissible words just in terms of the combinatorial data of the g.i.e.m. $f$.

We claim  that the set whose elements are the closures of  intervals in
$\tilde{\mathcal{P}}^n$
is exactly   the set of  admissible cylinders of the form $[a_na_{n-1}\dots a_0]$.  Indeed when $n = 0$ we have $\tilde{\mathcal{P}}^0=\{ f(I_{\al})\}_{\al \in \mathcal{A}}.$ Then
$$\overline{f(I_{\al})}=[(\al,0,0)]$$
Suppose  by induction that we have verified that the set of   all elements $\overline{f^i(I^n_{\al})}$, $1\leq i\leq q^n_\alpha$,  is the set of admissible  cylinders of the form  $[a_na_{n-1}\dots a_0]$. Recall that $\al^n(\ve)$,  $\al^n(1-\ve) \in \mathcal{A}$ are  the winner and loser, respectively, if $f_n$ has  type $\ve.$ There are three cases:

\begin{itemize}
\item If  $\al\not=\al^n(1-\ve)$ then $I^{n+1}_\alpha \subset I^n_\alpha$ and $q^{\al}_{n+1}=q^{\al}_n$. So  $$\overline{f^i(I^{n+1}_{\al})}=[a_{n+1} a_n \dots a_0] $$
for every  $1\leq i\leq q^{\al}_{n+1}=q^{\al}_n$, where $a_{n+1}=(\al,0,n+1)$ and $\overline{f^i(I^{n}_{\al})}=[a_n\dots a_0]$;
\item If $\al=\al^n(1-\ve)$ and  $1\leq i\leq q_n^{\al^n(\ve)}$ we have
$$\overline{f^{i+(1-\ve)q_n^{\al^n(1-\ve)}}(I^{n+1}_{\al^n(1-\ve)})}=[a_{n+1} a_n \dots a_0] $$
where $a_{n+1}=(\al^n(1-\ve),\ve,n+1)$ and   $\overline{f^i(I^{n}_{\al^n(\ve)})}=[a_n\dots a_0];$

\item If $\al=\al^n(1-\ve)$ and   $1\leq j\leq q_n^{\al^n(1-\ve)}$ we have
$$\overline{f^{j+\ve q_n^{\al^n(\ve)}}(I^{n+1}_{\al^n(1-\ve)})}=[a_{n+1}a_n\dots a_0],$$ where $a_{n+1}=(\al^n(1-\ve),1-\ve,n+1)$ and $\overline{f^j(I^{n}_{\al^n(1-\ve)})}=[a_n\dots a_0].$

\end{itemize}

As a consequence, for any admissible word of the form $a_n\dots a_0$, with $a_n=(\alpha,\chi, n)$ we have that
the first entry times $k_1$, $k_2$, $\dots$,  $k_n$ are constant functions  in $[a_n\dots a_0]$ and $f^{k_n}[a_n\dots a_0]=f_n(I^n_\alpha)$.

Denote by $\ell(a_n \ldots a_0)$  the Lebesgue measure of the cylinder $[ a_n \ldots a_0]$. The  proof of Theorem 2 will be based on the ergodic properties of the sequence of random variables $a_n=(\al_n, \chi_n,n),$ $\al_n\in\mathcal{A}$ and $\chi_n\in\{0,1\}$ with respect to the Lebesgue measure.

If the word $a_{n-1}\ldots a_0$ is admissible we can consider the conditional probabilities
$$\ell(a_n|a_{n-1}\ldots a_0)=\dfrac{\ell(a_n\ldots a_0)}{\ell(a_{n-1}\ldots a_0)}.$$

\begin{lemma}
\label{difeocil}
Let  $$\omega_1=a'_0 \ldots a'_{n-1} a_{n}\ldots a_{n+k},$$ $$ \tilde{\omega}_1= a''_0 \ldots a''_{n-1} a_{n}\ldots a_{n+k}$$  be admissible words. Denote    $\omega_2=a'_0 \ldots a'_{n-1}$ and $ \tilde{\omega}_2=a''_0 \ldots a''_{n-1}$.  Then
\begin{itemize}
\item[A.]  Indeed $\pi_1(a'_{n-1})=\pi_1(a''_{n-1})=:\beta$ and there exist $1\leq i,j\leq q^{n-1}_\beta$  such that  $[\omega_2]= \overline{f^i(I^{n-1}_\beta)}$ and $[\tilde{\omega}_2]=\overline{ f^j(I^{n-1}_\beta)}$,
\item[B.] In particular $r=j-i$  is the  unique $r\in \mathbb{Z}$  such that $f^r$ is a diffeomorphism on $[\omega_2]$ and $f^r([\omega_2])=[\tilde{\omega}_2]$.
\item[C.] The integer  $r$ is also  the unique integer such that  $f^r$ is a diffeomorphism on $[\omega_1]$ and $f^r([\omega_1])=[\tilde{\omega}_1]$.
\end{itemize}
\end{lemma}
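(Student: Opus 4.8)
The plan is to read everything off the inductive description of admissible cylinders given just above the statement, together with the identification recalled there: the admissible words of the form $b_m b_{m-1}\cdots b_0$ are exactly the floors $\overline{f^{p}(I^{m}_\gamma)}\in\tilde{\mathcal{P}}^{m}$ ($1\le p\le q^{m}_\gamma$) of the Rohlin towers of $f$ over $I^{m}$, and the word with top letter $b_m$ corresponds to a floor over $I^m_{\pi_1(b_m)}$. For Part A, the first point is that the single letter $a_n$ already determines $\pi_1$ of the top letter of any admissible word $b_{n-1}a_n\cdots$: the three cases of the level-$(n-1)\to n$ transition give $\pi_1(b_{n-1})=\pi_1(a_n)$ when $\pi_1(a_n)\ne\al^{n-1}(1-\ve_{n-1})$ (which also forces $\pi_2(a_n)=0$), $\pi_1(b_{n-1})=\al^{n-1}(\ve_{n-1})$ when $a_n=(\al^{n-1}(1-\ve_{n-1}),\ve_{n-1},n)$, and again $\pi_1(b_{n-1})=\al^{n-1}(1-\ve_{n-1})$ when $a_n=(\al^{n-1}(1-\ve_{n-1}),1-\ve_{n-1},n)$; in every case the answer depends only on $a_n$ and the fixed level-$(n-1)$ combinatorics. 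Since $\omega_1,\tilde\omega_1$ share the letter $a_n$, this yields $\pi_1(a'_{n-1})=\pi_1(a''_{n-1})=:\beta$, and since $\omega_2,\tilde\omega_2$ are admissible words of levels $0,\dots,n-1$ whose top letters project to $\beta$, the identification above gives $[\omega_2]=\overline{f^{i}(I^{n-1}_\beta)}$, $[\tilde\omega_2]=\overline{f^{j}(I^{n-1}_\beta)}$ with $1\le i,j\le q^{n-1}_\beta$.

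For Part B I would put $r=j-i$. Because $R^{n-1}(f)|_{I^{n-1}_\beta}=f^{q^{n-1}_\beta}$ is a diffeomorphism, the floors $I^{n-1}_\beta,f(I^{n-1}_\beta),\dots,f^{q^{n-1}_\beta-1}(I^{n-1}_\beta)$ each lie in a single continuity interval $I_\gamma$, so $f$ is a diffeomorphism on each of them; as $i,j\le q^{n-1}_\beta$ it follows that $f^{r}$ (the relevant branch of $f^{-|r|}$ if $r<0$) is a diffeomorphism on $[\omega_2]=\overline{f^i(I^{n-1}_\beta)}$ with $f^{r}([\omega_2])=\overline{f^{j}(I^{n-1}_\beta)}=[\tilde\omega_2]$. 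For uniqueness, if $f^{r'}$ had the same two properties then $f^{r'}\circ(f^{r})^{-1}=f^{r'-r}$ would be an orientation-preserving self-diffeomorphism of the non-degenerate interval $[\tilde\omega_2]$, hence fix its endpoints; if $r'\ne r$ these would be periodic points of $f$, which is impossible since $f\in\mathcal{B}_k^{2+\nu}$ is infinitely renormalizable with no connection and has genus one, hence (by Denjoy's theorem \cite{herman}) is topologically conjugate to an irrational rotation and has no periodic orbit. Thus $r'=r$.

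For Part C I would note that $[\omega_1]\subset[\omega_2]$ and $[\tilde\omega_1]\subset[\tilde\omega_2]$, so $f^{r}$ is automatically a diffeomorphism on $[\omega_1]$, and the content is the equality $f^{r}([\omega_1])=[\tilde\omega_1]$. I would prove it by induction on the length of the common tail $a_n\cdots a_{n+k}$, using the observation that adjoining a letter $a_m$ to a level-$(m-1)$ cylinder $\overline{f^{\ell}(I^{m-1}_\gamma)}$ produces — in each of the three transition cases — the cylinder $\overline{f^{\ell+\sigma}(I^{m}_{\pi_1(a_m)})}$, where the shift $\sigma=\sigma_{m-1}(a_m)\in\{0,\ (1-\ve_{m-1})q^{m-1}_{\al^{m-1}(1-\ve_{m-1})},\ \ve_{m-1}q^{m-1}_{\al^{m-1}(\ve_{m-1})}\}$ depends only on $a_m$ and the level-$(m-1)$ combinatorics, never on the floor index $\ell$. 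Applying this repeatedly to $[\omega_2]=\overline{f^i(I^{n-1}_\beta)}$ and $[\tilde\omega_2]=\overline{f^j(I^{n-1}_\beta)}$ (which share all of $a_n,\dots,a_{n+k}$, hence the same intermediate base letters and the same shifts), the two floor indices stay exactly $r=j-i$ apart at every step; after $k+1$ steps one gets $[\omega_1]=\overline{f^{i+S}(I^{n+k}_{\pi_1(a_{n+k})})}$ and $[\tilde\omega_1]=\overline{f^{j+S}(I^{n+k}_{\pi_1(a_{n+k})})}$ with the same $S$, so $f^{r}([\omega_1])=[\tilde\omega_1]$; uniqueness of $r$ relative to $[\omega_1]$ follows exactly as in Part B, with $[\tilde\omega_1]$ in place of $[\tilde\omega_2]$.

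I expect the main obstacle to be the bookkeeping in Part C: one must extract cleanly from the inductive cylinder formulas that the refinement shift depends only on the adjoined letter and not on which tower floor is being refined, since this is precisely what forces the \emph{same} integer $r$ to serve for $\omega_1$ as for $\omega_2$. A secondary technical point is pinning down the ``no periodic orbits'' input (from the no-connection and genus-one hypotheses, via Denjoy) that underlies the uniqueness claims in Parts B and C.
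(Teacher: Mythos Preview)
Your proof is correct and follows essentially the same approach as the paper: the three-case transition analysis to read $\beta$ off $a_n$ for Part~A, absence of periodic orbits for uniqueness in Parts~B and~C, and induction on the length of the common tail for Part~C. The only minor differences are organizational---your Part~C tracks the cumulative shift $S$ explicitly (the paper instead applies the $k=0$ case at the top level and glues it to the inductive hypothesis via uniqueness on the intermediate cylinder), and your appeal to Denjoy for ``no periodic points'' literally covers only the one-discontinuity case, whereas the paper simply asserts this fact (for infinitely renormalizable g.i.e.m.\ with no connection it follows from Keane's minimality result~\cite{keane}).
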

\dem The uniqueness of $r$ follows from  the fact that $f$ does not have periodic points. Indeed if $f^{r_1}$ and $f^{r_2}$, $r_1 < r_2$, are diffeomorphisms on $[\omega_i]$ and  $f^r([\omega_i])=[\tilde{\omega}_i]$ for some $i \in \{1,2\}$ then the points in $\partial [\omega_i]$ are fixed points of $f^{r_2-r_1}$, which is a contradiction. It remains to show  the existence of $r$. We will prove this by induction on $k$. Suppose $k=0$. Denote $a_n=(\al,\chi,n)$.  Let $\ve $ be the type of $f_{n-1}$. By the previous discussion about the partitions $\tilde{\mathcal{P}}^n$, there are three cases. \\

\noindent {\it Case i.}  If $\al\not=\al^{n-1}(1-\ve)$ then $\chi=0$ and
$$[a_{n} a'_{n-1} \dots a'_0]= \overline{f^i(I^{n}_{\al})}$$
$$[a_{n} a''_{n-1} \dots a''_0]= \overline{f^j(I^{n}_{\al})}$$ for some  $1\leq i, j\leq q^{\al}_{n}=q^{\al}_{n-1}$, with $\overline{f^i(I^{n-1}_{\al})}=[a'_{n-1}\dots a'_0]$ and  $\overline{f^j(I^{n-1}_{\al})}=[a''_{n-1}\dots a''_0]$. In particular $\alpha=\pi_1(a'_{n-1})=\pi_1(a''_{n-1})$.  So take  $r=i-j$. \\

\noindent {\it Case ii.}  If  $\al =\al^{n-1}(1-\ve)$ and $\chi=\ve$ then
$$\overline{f^{i+(1-\ve) q_{n-1}^{\al}}(I^{n}_{\al})}=[a_{n}a'_{n-1}\dots a'_0],$$
$$\overline{f^{j+(1-\ve) q_{n-1}^{\al}}(I^{n}_{\al})}=[a_{n}a''_{n-1}\dots a''_0],$$ for some $1\leq i, j\leq q_{n-1}^{\al^{n-1}(\ve)}$ with
 $[a'_{n-1}\dots a'_0]=\overline{f^i(I^{n-1}_{\al^{n-1}(\ve)})}$ and $[a''_{n-1}\dots a''_0]=\overline{f^j(I^{n-1}_{\al^{n-1}(\ve)})}$.  In particular $\alpha^{n-1}(\ve)=\pi_1(a'_{n-1})=\pi_1(a''_{n-1})$.  So take  $r=i-j$. \\

 \noindent {\it Case iii.}  If  $\al =\al^{n-1}(1-\ve)$ and $\chi=1-\ve$ then
$$\overline{f^{i+\ve q_{n-1}^{\al^{n-1}(\ve)}}(I^{n}_{\al})}=[a_{n}a'_{n-1}\dots a'_0],$$
$$\overline{f^{j+\ve q_{n-1}^{\al^{n-1}(\ve)}}(I^{n}_{\al})}=[a_{n}a''_{n-1}\dots a''_0],$$ for some $1\leq i, j\leq q_{n-1}^{\al}$ with
 $[a'_{n-1}\dots a'_0]=\overline{f^i(I^{n-1}_{\al})}$ and $[a''_{n-1}\dots a''_0]=\overline{f^j(I^{n-1}_{\al})}$. Again we have $\alpha=\pi_1(a'_{n-1})=\pi_1(a''_{n-1})$.Take  $r=i-j$.  \\

\noindent This completes the proof for $k=0$. Suppose by induction we have proved the statement for $k-1\geq 0$.  By the case $k=0$  there exists a unique $r$ such that $f^r[a'_0 \ldots a'_{n-1} a_{n}\ldots a_{n+k-1}]= [a''_0 \ldots a''_{n-1} a_{n}\ldots a_{n+k-1}]$ in a diffeomorphic way, and moreover $r$ is the unique $r$ such that $f^r[a'_0 \ldots a'_{n-1} a_{n}\ldots a_{n+k}]= [a''_0 \ldots a''_{n-1} a_{n}\ldots a_{n+k}]$. By induction assumption there exists a unique $r'$ such that  $f^{r
}[a'_0 \ldots a'_{n-1}]= [a''_0 \ldots a''_{n-1}]$ and moreover $r'$ is the unique integer such that $$f^r[a'_0 \ldots a'_{n-1} a_{n}\ldots a_{n+k-1}]= [a''_0 \ldots a''_{n-1} a_{n}\ldots a_{n+k-1}].$$ So $r=r'$. This completes the proof.  \cqd

\begin{lemma}
\label{markov}
There are constants $C_{19}>0$ and $0<\la_3=\la_3(\la)<1$ such that
$$e^{-C_{19}\la_3^s}\leq \frac{\ell(a_n|a_{n-1}\ldots a_{n-s} a''_{n-s-1} \ldots a''_0)}{\ell(a_n|a_{n-1} \ldots a_{n-s} a'_{n-s-1} \ldots a'_0)}\leq e^{C_{19}\la_3^s},$$
provided both words are admissible.
\end{lemma}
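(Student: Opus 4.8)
The plan is to reduce the statement to a comparison of nonlinearities of the return maps realizing the conditional measures, and then to exploit the exponential decay of the partition norm $|\mathcal{P}^n|$ (Proposition \ref{pvz}) together with the $C^{2+\nu}$-control on $f$ to get the factor $\la_3^s$. First I would fix the admissible words $\omega = a_n a_{n-1}\ldots a_{n-s} a''_{n-s-1}\ldots a''_0$ and $\omega' = a_n a_{n-1}\ldots a_{n-s} a'_{n-s-1}\ldots a'_0$, and set $v = a_{n-1}\ldots a_{n-s}$, $u'' = a''_{n-s-1}\ldots a''_0$, $u' = a'_{n-s-1}\ldots a'_0$. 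By Lemma \ref{difeocil} applied with the common tail $v$ (at level $n-s$), there is a unique integer $r$ such that $f^r$ is a diffeomorphism carrying $[v u']$ onto $[v u'']$, and the same $r$ carries $[\omega']$ diffeomorphically onto $[\omega]$ (and also $[a_{n-1}\ldots a_{n-s} u']$ onto $[a_{n-1}\ldots a_{n-s} u'']$). Hence
\[
\frac{\ell(a_n \mid a_{n-1}\ldots a_{n-s} u'')}{\ell(a_n \mid a_{n-1}\ldots a_{n-s} u')}
= \frac{\ell(\omega)/\ell([a_{n-1}\ldots a_{n-s} u''])}{\ell(\omega')/\ell([a_{n-1}\ldots a_{n-s}u'])}
= \frac{\int_{[\omega']} Df^r}{\int_{[a_{n-1}\ldots a_{n-s}u']} Df^r} \cdot 1,
\]
so the quantity to be estimated is exactly the ratio of the mean value of $Df^r$ on the small cylinder $[\omega']$ to its mean value on the larger cylinder $[a_{n-1}\ldots a_{n-s}u']$, both sitting inside the domain of the diffeomorphism $f^r$.

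Next I would estimate $\log\big(Df^r(x)/Df^r(y)\big)$ for $x, y$ in $[a_{n-1}\ldots a_{n-s}u']$ by the now-standard telescoping used in Lemma \ref{DZ}: writing $f^r = g_r \circ \cdots \circ g_1$ as a composition of branches of $f$,
\[
\log\frac{Df^r(x)}{Df^r(y)} = \sum_{j=1}^{r} \int_{f^{j-1}(y)}^{f^{j-1}(x)} n_{g_j}(t)\, dt,
\]
and each pair $f^{j-1}(x), f^{j-1}(y)$ lies in a common element of $\mathcal{P}^{n-s}$ — more precisely, since $x,y$ share the entire tail $a_{n-1}\ldots a_{n-s}$, the points $f^{j-1}(x)$ and $f^{j-1}(y)$ lie in a common cylinder of the form $[b_{n-1}\ldots b_{n-s} c]$, hence in a common atom of $\tilde{\mathcal{P}}^{n-1}$. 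Using the $C^{2+\nu}$ bound on the nonlinearity of $f$ (hypothesis (i) of $\mathcal{B}_k^{2+\nu}$, giving $|n_f|\le C_1$ via bounded geometry, Lemmas \ref{strong}--\ref{image}) together with $|f^{j-1}(x)-f^{j-1}(y)|\le |\mathcal{P}^{n-1}|$ and the fact that the intervals $f^{j-1}[\omega']$, $j=1,\dots,r$, are pairwise disjoint (they are contained in distinct dynamical pieces, so $\sum_j |f^{j-1}[\omega']|\le 1$), I obtain
\[
\Big|\log\frac{Df^r(x)}{Df^r(y)}\Big| \le C_1 \sum_{j=1}^r |f^{j-1}(x)-f^{j-1}(y)| \le C_1 \, |\mathcal{P}^{n-1}|^{\nu} \sum_{j=1}^r |f^{j-1}[a_{n-1}\ldots a_{n-s}u']|.
\]
The crucial point is that the diameters along the orbit are bounded not by $|\mathcal{P}^{n-1}|$ but by the diameter of the cylinder determined by the shared symbols $a_{n-1}\ldots a_{n-s}$: by Proposition \ref{pvz} (iterated), these cylinders have size $\le \const\cdot\lambda^{s/k}$ relative to their level, so one gets $|\log (Df^r(x)/Df^r(y))|\le C_{19}\lambda_3^{s}$ with $\lambda_3 = \lambda^{\nu/k}$ (absorbing the sum $\sum_j|f^{j-1}[\cdot]|\le 1$). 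Finally, since the ratio of means is sandwiched between $\min$ and $\max$ of $Df^r(x)/Df^r(y)$ over $x\in[\omega']$, $y\in[a_{n-1}\ldots a_{n-s}u']$, it lies in $[e^{-C_{19}\lambda_3^s}, e^{C_{19}\lambda_3^s}]$, which is the claim.

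The main obstacle is the second-to-last step: making rigorous that sharing the block of symbols $a_{n-1},\dots,a_{n-s}$ forces $f^{j-1}(x)$ and $f^{j-1}(y)$ (for every $j$ in the relevant range) to lie in a common cylinder whose diameter decays geometrically in $s$. This requires tracking, via the explicit description of the partitions $\tilde{\mathcal{P}}^m$ and the first-entry-time functions $k_m$ given just before Lemma \ref{difeocil}, that the orbit segment $\{f^{j-1}(x)\}_{j=1}^r$ visits, at each of the levels $n-s, n-s+1,\dots,n-1$, an atom which is pinned down by the corresponding shared symbol; equivalently, that the "excursions" of $f^r$ between consecutive entries into $I^{n-s}$ are words which are fully determined by $a_{n-1}\ldots a_{n-s}$ and hence have controlled length. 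Once this combinatorial bookkeeping is in place the analytic estimate is routine, being the same telescoping/disjointness argument already used for Lemmas \ref{DZ} and \ref{D2Z}. I would also need to check the harmless point that the common factor "$1$" from $\ell([a_{n-1}\ldots a_{n-s}u''])/\ell([a_{n-1}\ldots a_{n-s}u'])$ displayed above is indeed correct, i.e.\ that $f^r$ maps the denominator cylinder of the left conditional onto that of the right one — which is exactly the last assertion (part C, applied with the word $a_{n-1}\ldots a_{n-s}$ in place of $a_n\ldots a_{n+k}$) of Lemma \ref{difeocil}.
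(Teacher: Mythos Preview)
Your approach is the paper's: reduce via Lemma~\ref{difeocil} to the distortion of the transfer diffeomorphism $f^r$, telescope, and control the sum of orbit diameters. The paper packages the distortion as the cross-ratio recursion $\rho_k=|f^{k}(A')|\,|B'|\big/\big(|A'|\,|f^{k}(B')|\big)$ rather than your pointwise $Df^r(x)/Df^r(y)$, but this is the same computation.

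Two execution errors to fix. First, the H\"older exponent $\nu$ does not belong here: the telescoped bound uses only $|n_f|_{C^0}\le C_1$, so $\lambda_3=\lambda^{1/k}$, not $\lambda^{\nu/k}$; your displayed inequality with the factor $|\mathcal{P}^{n-1}|^{\nu}$ is simply wrong as written. Second, and this is the point you flag as the ``main obstacle'' without resolving, the phrase ``absorbing the sum $\sum_j|f^{j-1}[\cdot]|\le 1$'' misses the actual mechanism producing $\lambda_3^s$. What the paper does in its last displayed line is: let $C'=[u']$ be the level-$(n-s-1)$ atom containing the level-$(n-1)$ atom $B'=[a_{n-1}\ldots a_{n-s}u']$. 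Lemma~\ref{difeocil}.A gives $|r|<q^{n-s-1}_{\pi_1(a'_{n-s-1})}$, so the intervals $f^t(C')$, $0\le t<|r|$, are \emph{distinct} atoms of $\tilde{\mathcal{P}}^{n-s-1}$ and hence $\sum_t|f^t(C')|\le 1$. Meanwhile, iterating the nested-atom estimate from the proof of Proposition~\ref{pvz} gives $|f^t(B')|/|f^t(C')|\le C\lambda_3^{s}$ uniformly in $t$. Multiplying and summing yields $\sum_t|f^t(B')|\le C\lambda_3^{s}$. Summing the level-$(n-1)$ atoms directly, as you wrote, gives only $\le 1$ with no decay in $s$; the gain comes from comparing against the coarser level-$(n-s-1)$ atoms, and it is precisely the bound $|r|<q^{n-s-1}_{\beta}$ from Lemma~\ref{difeocil} that makes \emph{those} disjoint along the orbit.

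(A minor slip: the ``$\cdot\,1$'' in your first display should be $\ell([vu'])/\ell(\omega')$; the correct statement, which you give in words immediately after, is that the quantity equals the ratio of \emph{averages} of $Df^r$ over $[\omega']$ and $[vu']$.)
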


\dem

Let $$f^{i_3}(I^{n-s}_{\alpha}) \subset f^{i_2}(I^{n}_{\beta}) \subset f^{i_1}(I^{n+1}_{\gamma}),$$ with  $1\leq i_3\leq q^{n-s}_\alpha$,  be the intervals corresponding to words
$$(a'_0,\ldots,a'_{n-s-1},a_{n-s},\ldots, a_{n}), (a'_0,\ldots,a'_{n-s-1},a_{n-s},\ldots, a_{n-1}), (a'_0,\ldots,a'_{n-s-1}),$$
respectively. By Lemma \ref{difeocil} there is $j \in \mathbb{Z}$ with  $1\leq i_3+j\leq q_{n-s}^{\pi_1(a_{n-s})},$ such that $$f^{i_3+j}(I^{n-s}_{\alpha}) \subset f^{i_2+j}(I^{n}_{\beta})\subset f^{i_1+j}(I^{n+1}_{\gamma})$$ are the intervals corresponding to words
$$(a''_0,\ldots,a''_{n-s-1},a_{n-s},\ldots,a_{n}), (a''_0,\ldots,a''_{n-s-1},a_{n-s},\ldots, a_{n-1}) \mbox{~and~} (a''_0,\ldots,a''_{n-s-1}),$$ respectively.
Denote

$$\rho_k:=\dfrac{|f^{i_2}(I^{n+1}_{\beta})|\cdot |f^{i_1+k}(I^{n}_{\gamma})|}{|f^{i_1}(I^{n}_{\gamma})|\cdot |f^{i_2+k}(I^{n+1}_{\beta})|},\quad\quad 0\leq k\leq j.$$

We have

\begin{eqnarray*}
\rho_{k+1}&:= &\dfrac{\int_{f^{i_1+k}(I^n_{\gamma})}Df(s)ds}{\int_{f^{i_2+k}(I^{n+1}_{\beta})}Df(s)ds} \cdot\dfrac{|f^{i_2}(I^{n+1}_{\beta})|}{|f^{i_1}(I^n_{\gamma})|}\\
&=&\dfrac{Df(s_{i_1+k}) \ |f^{i_1+k}(I^n_{\gamma})|}{Df(s_{i_2+k})\ |f^{i_2+k}(I^{n+1}_{\beta})|} \cdot\dfrac{|f^{i_2}(I^{n+1}_{\beta})|}{|f^{i_1}(I^n_{\gamma})|}\\
&=&\dfrac{Df(s_{i_1+k})}{Df(s_{i_2+k})}\cdot\rho_k,
\end{eqnarray*}

where $s_{i_1+k}\in f^{i_1+k}(I^n_{\gamma})$ and $s_{i_2+k}\in f^{i_2+k}(I^{n+1}_{\beta}).$ Furthermore,

\begin{eqnarray}
\label{controle}
\exp\{-C_1 \cdot|f^{i_1+k}(I^{n}_{\gamma})|\}\leq \dfrac{Df(s_{i_1+k})}{Df(s_{i_2+k})}\leq \exp\{C_1 \cdot|f^{i_1+k}(I^{n}_{\gamma})|\}.
\end{eqnarray}

Then by \eqref{controle} we have

$$\exp\{-C_1 \sum_{t=0}^{j-1}|f^{i_1+t}(I^{n}_{\gamma})|\}\leq \rho_j\leq \exp\{C_1 \sum_{t=0}^{j-1}|f^{i_1+t}(I^{n}_{\gamma})|\}.$$

However by Proposition \ref{pvz},

$$\sum_{t=0}^{j-1}|f^{i_1+t}(I^{n}_{\gamma})|=\sum_{t=0}^{j-1}|f^{i_3+t}(I^{n-s}_{\alpha})|\cdot\frac{|f^{i_1+t}(I^{n}_{\gamma})|}{|f^{i_3+t}(I^{n-s}_{\alpha})|}\leq C_{20}\cdot\la_3^s,$$
where $C_{20}=C_{20}(C_{17},C_{18},\la)>0$. Taking $C_{19}=C_1\cdot C_{20}$ we obtain the result.

\cqd


\begin{lemma}
\label{matriz}
There exists $C_{21}=C_{21}(C_{19},\la)>0$ such that for all $n,m,$
$$e^{-C_{21}}\leq\dfrac{\ell(a_{n+m},\ldots,a_n|a'_{n-1},\ldots,a'_0)}{\ell(a_{n+m},\ldots,a_n|a''_{n-1},\ldots,a''_0)}\leq e^{C_{21}},$$
provided both words $(a'_0,\ldots,a'_{n-1},a_n,\ldots,a_{n+m})$ and $(a''_0,\ldots,a''_{n-1},a_n,\ldots,a_{n+m})$ are admissible.
\end{lemma}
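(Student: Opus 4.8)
The plan is to write the conditional probability on both sides as a telescoping product over the levels $n,n+1,\dots,n+m$ and then compare the two products factor by factor, using Lemma \ref{markov} to control each factor.

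First I would use the definition $\ell(a_k\mid a_{k-1}\ldots a_0)=\ell(a_k\ldots a_0)/\ell(a_{k-1}\ldots a_0)$ to obtain
$$\ell(a_{n+m},\ldots,a_n\mid a'_{n-1},\ldots,a'_0)=\prod_{j=0}^{m}\ell\big(a_{n+j}\mid a_{n+j-1},\ldots,a_n,a'_{n-1},\ldots,a'_0\big),$$
and the analogous identity with $a''$ in place of $a'$. Here each truncated word appearing as a conditioning word is admissible, since the corresponding cylinder contains the nonempty cylinder $[\,a_{n+m}\ldots a_0\,]$, so all the conditional probabilities are well defined. Dividing, the ratio in the statement becomes
$$\prod_{j=0}^{m}\frac{\ell\big(a_{n+j}\mid a_{n+j-1},\ldots,a_n,a'_{n-1},\ldots,a'_0\big)}{\ell\big(a_{n+j}\mid a_{n+j-1},\ldots,a_n,a''_{n-1},\ldots,a''_0\big)}.$$

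Next, for each fixed $j$ the $j$-th factor compares two conditional probabilities of the symbol $a_{n+j}$ whose conditioning words agree on the $j$ symbols $a_{n+j-1},\ldots,a_n$ and only then begin to differ. This is precisely the situation of Lemma \ref{markov} with the level $n$ there replaced by $n+j$ and the parameter $s$ there replaced by $j$, so that
$$e^{-C_{19}\lambda_3^{\,j}}\le\frac{\ell\big(a_{n+j}\mid a_{n+j-1},\ldots,a_n,a'_{n-1},\ldots,a'_0\big)}{\ell\big(a_{n+j}\mid a_{n+j-1},\ldots,a_n,a''_{n-1},\ldots,a''_0\big)}\le e^{C_{19}\lambda_3^{\,j}}.$$
Multiplying these bounds over $j=0,\dots,m$ and using $\sum_{j\ge0}\lambda_3^{\,j}=1/(1-\lambda_3)$ yields the assertion with $C_{21}=C_{19}/(1-\lambda_3)$, which depends only on $C_{19}$ and $\lambda$ since $\lambda_3=\lambda_3(\lambda)$.

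The argument is essentially bookkeeping, and I do not expect any genuine obstacle: the geometric decay $\lambda_3^{\,j}$ furnished by Lemma \ref{markov} is exactly what makes the product converge to a bound uniform in both $n$ and $m$. The only points that require a little care are matching the parameter $s$ in Lemma \ref{markov} to the length $j$ of the common top block at each step of the telescoping, and verifying that every truncated word occurring in the product is admissible so that the conditional probabilities make sense.
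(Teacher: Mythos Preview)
Your proposal is correct and is exactly the approach the paper takes. The paper's proof simply records the two telescoping product identities
\[
\ell(a_{n+m},\ldots,a_n\mid a'_{n-1},\ldots,a'_0)=\prod_{i=0}^m \ell(a_{n+i}\mid a_{n+i-1},\ldots, a'_{n-1},\ldots, a'_0)
\]
(and the analogous one with $a''$) and then says the result ``follows easily from Lemma~\ref{markov}''; you have spelled out that step, matching $s=j$ at the $j$-th factor and summing the geometric series to get the explicit constant $C_{21}=C_{19}/(1-\lambda_3)$.
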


\dem The proof follows easily from Lemma \ref{markov} and the equations

$$\ell(a_{n+m},\ldots,a_n|a'_{n-1},\ldots,a'_0)=\prod_{i=0}^m \ell(a_{n+i}|a_{n+i-1},\ldots, a'_{n-1},\ldots, a'_0),$$

$$\ell(a_{n+m},\ldots,a_n|a''_{n-1},\ldots,a''_0)=\prod_{i=0}^m \ell(a_{n+i}|a_{n+i-1},\ldots, a''_{n-1},\ldots, a''_0).$$

\cqd

\begin{lemma}
\label{juncao}\label{juncao1}
Let $a_k\ldots a_n$, $k\leq n$,  and $a'_n a'_{n+1}\ldots a'_{n+m}$ be  two admissible words, such that $a_n=(\alpha, \chi, n)$, $a'_n=(\alpha,\chi',n)$.  Then $$a_k \ldots a_n  a'_{n+1} \ldots a'_{n+m}$$ is admissible.
\end{lemma}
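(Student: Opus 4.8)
The plan is to reduce Lemma \ref{juncao1} to the combinatorial description of the partitions $\tilde{\mathcal{P}}^n$ and the inductive structure already used in Lemma \ref{difeocil}. The key observation is that admissibility of a word $a_k \ldots a_n$ is equivalent to the assertion that $[a_k \ldots a_n]$ is (the closure of) one of the intervals $f^i(I^n_\alpha)$ with $\alpha = \pi_1(a_n)$, together with the constraint that the ``tail'' $a_n$ records, via $\pi_2$, how this interval sits inside $I^{n-1}$; the earlier letters $a_k, \ldots, a_{n-1}$ only prescribe the orbit segment $\{f^j\}_{j\le i}$ used to reach the fundamental domain, and place no further restriction on what happens after time $i$. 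Dually, admissibility of $a'_n a'_{n+1} \ldots a'_{n+m}$ asserts that $[a'_n \ldots a'_{n+m}]$ is a subinterval of $f^{i'}(I^n_\alpha)$ for some $i'$ realizing the letter $a'_n$, and it depends only on the dynamics of $f$ restricted to (iterates of) $I^n_\alpha$, i.e.\ on the renormalization $R^n_d(f)$ and the combinatorial data $\pi^n$.

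First I would make precise the following \emph{localization principle}: if $a_n = (\alpha,\chi,n)$ and $a'_n = (\alpha,\chi',n)$ have the same first coordinate, then $f^{k_n}[a_k \ldots a_n] = f_n(I^n_\alpha) = f^{k'_n}[a'_n \ldots a'_{n+m}]$, where $k_n$ (resp.\ $k'_n$) is the constant value of the entry time on the respective cylinder — this is exactly the last displayed sentence before Lemma \ref{difeocil}. Thus both cylinders, after applying the appropriate power of $f$, land on the \emph{same} interval $f_n(I^n_\alpha) \subset I^n$. Consequently the word $a_k \ldots a_n a'_{n+1} \ldots a'_{n+m}$ describes a perfectly consistent itinerary: start in the fundamental domain of level $k$ prescribed by $a_k$, follow the orbit up to $f^{k_n}$ landing in $f_n(I^n_\alpha)$, and then continue following the level-$n$ combinatorics exactly as prescribed by $a'_{n+1} \ldots a'_{n+m}$ (which is meaningful precisely because the relevant sub-dynamics only sees $I^n_\alpha$ and its images). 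The cylinder $[a_k \ldots a_n a'_{n+1}\ldots a'_{n+m}]$ is then the preimage under the relevant branch of $f$ of the nonempty set $f^{k_n}\big([a_k\ldots a_n]\big) \cap f^{k'_n}\big([a'_n \ldots a'_{n+m}]\big) = f^{k'_n}[a'_n\ldots a'_{n+m}] \neq \emptyset$, hence nonempty, hence the word is admissible.

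Concretely, I would run an induction on $m$ mirroring the three-case analysis in Lemma \ref{difeocil}: for $m = 0$ there is nothing to prove since $a_k \ldots a_n$ is admissible by hypothesis. For the inductive step, the word $a'_n \ldots a'_{n+m}$ being admissible means, by the partition description, that $[a'_{n+1}\ldots a'_{n+m}]$ is obtained from $[a'_n \ldots a'_{n+m-1}]$ by one of the three elementary operations (stay inside a non-loser interval; the winner-half of a split; or the loser-half of a split) at level $n+m-1$; each of these operations depends only on which interval of $\tilde{\mathcal{P}}^{n+m-1}$ we are in and on the combinatorial type $\ve_{n+m-1}$, not on how that interval was reached. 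Since by the inductive hypothesis $a_k\ldots a_n a'_{n+1}\ldots a'_{n+m-1}$ is admissible and its level-$(n+m-1)$ cylinder equals $f^{r}\big([a'_n\ldots a'_{n+m-1}]\big)$ for the appropriate iterate $f^r$ (an application of Lemma \ref{difeocil} with the common tail $a_n a'_{n+1}\ldots a'_{n+m-1}$ — here is where I use that $\pi_1(a_n)=\pi_1(a'_n)=\alpha$), the same elementary operation applied to it yields the nonempty cylinder $[a_k\ldots a_n a'_{n+1}\ldots a'_{n+m}]$, completing the induction.

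The step I expect to be the main obstacle is the bookkeeping in the inductive step: one must check that the ``elementary operation'' realizing the passage $a'_{n+m-1} \leadsto a'_{n+m}$ in the pure level-$n$ symbolic world is transported correctly under the diffeomorphism $f^r$ identifying $[a'_n \ldots a'_{n+m-1}]$ with $[a_k \ldots a_n a'_{n+1}\ldots a'_{n+m-1}]$, i.e.\ that $f^r$ commutes with the refinement of partitions $\tilde{\mathcal{P}}^{n+m-1} \to \tilde{\mathcal{P}}^{n+m}$ restricted to these cylinders. This is essentially a matter of tracking the three cases of the Rauzy–Veech splitting and verifying that $f^r$ maps the winner-half to the winner-half and the loser-half to the loser-half, with the entry-time indices shifting by the constant $r$ — a routine but slightly tedious unwinding of the definitions of $I^{n+1}_\alpha$ and of the symbolic function $s$. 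Everything else is a direct consequence of the already-established structure of $\tilde{\mathcal{P}}^n$ and Lemma \ref{difeocil}.
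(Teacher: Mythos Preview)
Your second paragraph already contains the paper's proof: extend both words to start at level $0$, use the fact that $f^{k_n}[a_0\ldots a_n] = f_n(I^n_\alpha) \supset f^{k'_n}[a'_0\ldots a'_{n+m}]$, and pull back. The paper then does \emph{not} switch to the induction on $m$ you propose in your third paragraph; instead it finishes the direct argument by checking that every point of $J = f^{-k_n}f^{k'_n}[a'_0\ldots a'_{n+m}]\cap\Lambda$ actually has the claimed symbolic sequence, via the one-line entry-time computation $k_i(x) = k'_i - k'_n + k_n$ for $n\le i\le n+m$ (this is the ``bookkeeping'' you were worried about, and it is immediate once you write $f^{k_n}(x) = f^{k'_n}(y)$ for the corresponding $y$).

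Your inductive alternative is also valid, but heavier than necessary. Two small points if you pursue it: first, when you invoke Lemma~\ref{difeocil} the common tail is $a'_{n+1}\ldots a'_{n+m-1}$, not $a_n a'_{n+1}\ldots a'_{n+m-1}$, since $a_n$ and $a'_n$ may differ in their $\chi$-component; the equality $\pi_1(a_n)=\pi_1(a'_n)=\alpha$ is then recovered as conclusion~(A) of that lemma rather than fed in as a hypothesis. Second, you should first extend $a_k\ldots a_n$ down to level $0$ so that the entry time $k_n$ is genuinely constant on the cylinder (on $[a_k\ldots a_n]$ with $k>0$ it need not be). With those adjustments the induction goes through, but the paper's direct computation is shorter and avoids the three-case split entirely.
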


\dem  Since   $a_k\ldots a_n$ is admissible then there exists $a_0, a_1, \dots, a_{k-1}$ such that $a_0\dots a_{k-1}a_k \dots a_n$ is admissible.  If       $a'_n a'_{n+1}\ldots a'_{n+m}$ is admissible then there exists an admissible word with the form    $a'_0 \ldots a'_n a'_{n+1}\ldots a'_{n+m}$. Note that the functions $k_1(x)=k_1'$, $k_2(x)=k_2'$, $\dots$, $k_{n+m}(x)=k_{n+m}'$ are constant in $[a'_0 \ldots a'_n a'_{n+1}\ldots a'_{n+m}]$  and
$$f^{k'_n}[a'_0 \ldots a'_n a'_{n+1}\ldots a'_{n+m}]\subset f_n(I_\alpha^n).$$
The functions $k_1(x)=k_1$, $k_2(x)=k_2$, $\dots$, $k_n(x)=k_n$ are constant in $[a_0\ldots a_n]$  and
$$f^{k_n}[a_0\ldots a_n]=f_n(I_\alpha^n).$$
In particular every $x$ in the non empty  set
$$J=f^{-k_n}f^{k'_n}[a'_0 \ldots a'_n a'_{n+1}\ldots a'_{n+m}]\cap \Lambda\subset [a_0\ldots a_n]$$
belongs to the cylinder $[a_0 \ldots a_n  a'_{n+1} \ldots a'_{n+m}]$. Indeed, since $x \in [a_0\ldots a_n]$ we have $s_i(x)=a_i$, $k_i(x)=k_i$,  for $0\leq i \leq n$. Note that $f^{k_n}(x)=f^{k_n'}(y)$, for some $y \in [a'_0 \ldots a'_n a'_{n+1}\ldots a'_{n+m}]\cap \Lambda$. Then $$k_{i}(x)=k_{i}(y)-k_n'+k_n=k_i'-k_n'+k_n$$ for $n \leq i \leq n+m$, since
$$f^{k_i'-k_n'+k_n}(x)=f^{k_i'}(y)\in I^i,$$
and moreover if $k_n\leq j < k_i'-k_n'+k_n$ then
$$f^{j}(x)=f^{j-k_n+k_n'}(y)\not\in I^i,$$
since $ 0\leq j-k_n +k_n' < k_i'=k_i(y)$ and if $j < k_n$ then
$$f^{j}(x) \not\in I^i,$$ because
$j < k_n\leq k_i(x)$. This implies that $s_i(x)=a'_i$ for $n< i\leq  n+m$.
 \cqd

\begin{lemma}
\label{completar}
Let $\alpha,\beta\in\A.$ For each $n$  there is an  admissible word $a_n \ldots a_{n+k}$  with $\pi_1(a_n)=\beta$, $\pi_1(a_{n+k})=\alpha$.
\end{lemma}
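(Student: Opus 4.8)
The plan is to obtain the lemma by a purely combinatorial argument: I would read the refinement rules for the partitions $\tilde{\mathcal{P}}^m$ ``downwards'' and feed in the transitive--losing property contained in the definition of $k$-bounded combinatorics. The first step is to record two consequences of the three cases describing how a level-$m$ cylinder splits into level-$(m+1)$ cylinders. For $J\in\tilde{\mathcal{P}}^m$ write $J=f^i(I^m_\gamma)$ with $1\le i\le q^m_\gamma$ --- the pair $(\gamma,i)$ being unique because $f$ has no connection (compare Lemma~\ref{difeocil}) --- put $\gamma(J)=\gamma$, and recall that $\overline{J}=[b_m\ldots b_0]$ is an admissible cylinder with $\pi_1(b_m)=\gamma(J)$. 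Then:
\begin{itemize}
\item[(P1)] for every $J\in\tilde{\mathcal{P}}^m$ there is $J'\in\tilde{\mathcal{P}}^{m+1}$ with $\overline{J'}\subseteq\overline{J}$ and $\gamma(J')=\gamma(J)$;
\item[(P2)] if moreover $\gamma(J)$ is the winner of the $m$-th renormalization, there is $J''\in\tilde{\mathcal{P}}^{m+1}$ with $\overline{J''}\subseteq\overline{J}$ and $\gamma(J'')$ equal to the loser of the $m$-th renormalization.
\end{itemize}
Here (P1) comes from the first and third of those cases (which keep the index and append a letter $(\gamma,0,m+1)$ or $(\gamma,1-\ve,m+1)$) and (P2) from the second (which passes from the winner's orbit to the loser's, appending $(\mathrm{loser},\ve,m+1)$); the only thing to verify is that the resulting orbit indices remain in the admissible range, which is immediate from $q^{m+1}_{\mathrm{loser}}=q^m_{\mathrm{loser}}+q^m_{\mathrm{winner}}$.

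Next I would fix $\alpha,\beta\in\A$ and $n$ and apply the $k$-bounded combinatorics hypothesis \emph{at level $n+k$} (rather than at $n$) to the ordered pair $(\beta,\alpha)$. This produces $n_1$ and $p\ge0$ with $n<n_1$ and $n_1+p<n+2k$ such that $\beta$ is the winner of the $n_1$-th renormalization, $\alpha$ is the loser of the $(n_1+p)$-th, and, for every $0\le i<p$, the loser of the $(n_1+i)$-th renormalization coincides with the winner of the $(n_1+i+1)$-th. Using the base level $n+k$ is exactly what forces $n_1>n$.

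Then I would build a decreasing chain of closed intervals $\overline{J_N}\subseteq\cdots\subseteq\overline{J_n}$ with $J_m\in\tilde{\mathcal{P}}^m$ and $N:=n_1+p+1$: take $J_n:=f(I^n_\beta)$, so $\gamma(J_n)=\beta$; apply (P1) repeatedly to produce $J_{n+1},\dots,J_{n_1}$, all with index $\beta$; since $\gamma(J_{n_1})=\beta$ is the winner at level $n_1$, (P2) gives $J_{n_1+1}$ whose index is the loser at level $n_1$; and inductively, for $1\le l\le p$, the index of $J_{n_1+l}$ equals the loser of the $(n_1+l-1)$-th renormalization, hence by the chain property the winner of the $(n_1+l)$-th, so (P2) applies again and yields $J_{n_1+l+1}$ with index the loser of the $(n_1+l)$-th; for $l=p$ that index is $\alpha$. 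Thus $\gamma(J_n)=\beta$ and $\gamma(J_N)=\alpha$. Writing $\overline{J_N}=[c_N\ldots c_0]$, the inclusions $\overline{J_N}\subseteq\overline{J_m}$, together with the facts that distinct level-$m$ cylinders have disjoint interiors and that points of $\Lambda$ lie in no cylinder boundary, force $c_m$ to be the letter at level $m$ of $\overline{J_m}$ for every $n\le m\le N$; in particular $\pi_1(c_n)=\gamma(J_n)=\beta$ and $\pi_1(c_N)=\gamma(J_N)=\alpha$. Finally, the sub-word $c_n c_{n+1}\ldots c_N$ is admissible because its cylinder contains $\overline{J_N}\neq\emptyset$, so it is the desired admissible word, with $k=N-n$.

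I expect the only delicate points to be bookkeeping ones: extracting (P1)--(P2) from the three refinement cases with the orbit-index ranges handled correctly, and choosing the base level $n+k$ so that the transitive--losing chain begins at a level $n_1\ge n$, which can then be joined back to level $n$ by the index-preserving steps (P1). No analytic input (bounded geometry, $C^{2+\nu}$ estimates) is needed --- the whole argument stays within the combinatorics of the Rauzy--Veech scheme, in the spirit of Lemmas~\ref{difeocil}--\ref{juncao}.
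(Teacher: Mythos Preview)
Your argument is correct and pleasantly self-contained: extracting the refinement rules as the two moves (P1) ``keep the index'' and (P2) ``pass from winner to loser'', and then threading a single nested chain of partition intervals through the transitive-losing chain supplied by $k$-bounded combinatorics, does produce an admissible word with the required endpoint letters. The verification of (P1)--(P2) from the three cases of the $\tilde{\mathcal{P}}^m\to\tilde{\mathcal{P}}^{m+1}$ refinement is accurate, including the index-range check via $q^{m+1}_{\mathrm{loser}}=q^m_{\mathrm{loser}}+q^m_{\mathrm{winner}}$.

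The paper's own proof is organised differently. Instead of building one long nested chain, it first manufactures a supply of \emph{two-letter} admissible words --- one of each of the types ``$\gamma\to\gamma$'' (when $\gamma$ does not lose) and ``winner $\to$ loser'' --- and then invokes Lemma~\ref{juncao} repeatedly to splice these short words together along the transitive chain and to pad at both ends with constant-index words. Your approach bypasses Lemma~\ref{juncao} entirely: the nested inclusions $\overline{J_N}\subseteq\cdots\subseteq\overline{J_n}$ already force the letters to be compatible, so no separate gluing lemma is needed. What the paper's route buys is an exact landing at level $n+k$, whereas your shift of the base level to $n+k$ (to guarantee $n_1>n$) yields $N=n_1+p+1\le n+2k$, so your word has length at most $2k+1$ rather than exactly $k+1$. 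This is harmless for every application in the paper (only a uniform bound is ever used, e.g.\ in Lemma~\ref{mp}), and you could in any case extend from level $N$ to any larger level by further applications of (P1); but be aware that the ``$k$'' in the lemma's statement is the bounded-combinatorics parameter, not a free variable as your final line ``with $k=N-n$'' suggests.
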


\dem Firstly we claim that if $\alpha\in \mathcal{A}$ does not lose in the nth level then  there  is an admissible word $b_n b_{n+1}$ such that
$\pi_1(b_n)=\pi_1(b_{n+1})=\alpha$. Indeed in this case $f_{n+1}(I^{n+1}_\alpha)\subset f_n(I^n_\alpha)$. This implies that the word $(\alpha, 0, n)(\alpha,0,n+1)$ is admissible.

Second, if $\alpha$ loses and $\beta \in \mathcal{A}$ wins in the nth level then  there  is an admissible word $b_n b_{n+1}$ such that
$\beta= \pi_1(b_n)$ and $\alpha=\pi_1(b_{n+1})$ and an admissible word $b'_n b'_{n+1}$ such that
$\alpha= \pi_1(b'_n)=\pi_1(b'_{n+1})$. Indeed if $f_n$ has type $1$ then we have that  $I^n_\alpha \subset f_n(I^n_\beta)$, $I^n_\alpha$ is not inside $I^{n+1}$ and enters $I^{n+1}$ after one iteration of $f_n$, landing in $f_n(I^n_\alpha)=f_{n+1}(I^{n+1}_\alpha)$,  so the word $(\beta,0,n)(\alpha,1,n+1)$ is admissible. Note also that $f_n(I^n_\alpha)=f_{n+1}(I^{n+1}_\alpha)$ so $(\alpha, 0, n)(\alpha,0,n+1)$ is admissible.  if $f_n$ has type $0$ then we have that
$f_{n+1}(I^{n+1}_\alpha)\subset f_n(I^n_\beta)$, so the word $(\beta,0,n)(\alpha,0,n+1)$ is admissible. Furthermore $f_n(I^n_\alpha)$ is not inside $I^{n+1}$ and it enters $I^{n+1}$ after one iteration of $f_n$, landing in $f_{n+1}(I^{n+1}_\alpha)$, so the word $(\alpha,0,n)(\alpha,1,n+1)$ is admissible.

In particular, using Lemma \ref{juncao} it follows that for every $m\geq 0$, $p> 0$ and $\alpha \in \mathcal{A}$  there exists a word $\omega=a_ma_{m+1}\dots a_{m+p}$ such that $\pi_1(a_i)=\alpha$ for every $m\leq i\leq m+p$.

Now suppose that $\beta$ wins from $\alpha$ in the $(m-1)$-th renormalization. Then as we saw above $(\beta,0,m-1)(\alpha,\epsilon_{m-1},m)$ and $\omega$ are  admissible. By Lemma \ref{juncao} there exists a word $(\beta,0,m-1)a'_ma_{m+1}\dots a_{m+p}$ such that $\pi_1(a_{m+p})=\alpha$.

Finally, since $f \in \mathcal{B}_k$, given $\alpha$, $\beta \in \mathcal{A}$, there exists a sequence of letters $\alpha_i$, and levels $n_i$, $i \leq s$, $n\leq n_i < n_{i+1}\leq n+k$ for every  $i < s$, such that $\alpha_0=\beta$, $\alpha_s=\alpha$ and $\alpha_i$ wins from $\alpha_{i+1}$ in the $n_i$th level. So there are admissible words
$a^i_{n_i}\dots a^i_{n_{i+1}}$ such that $\pi_1(a^i_{n_i})=\alpha_i$ and $\pi_1(a^i_{n_{i+1}})=\alpha_{i+1}$. By Lemma \ref{juncao} there is an admissible word $a_{n_0}\dots a_{n_s}$ such that $\pi_1(a_{n_0})=\beta$ and $\pi_1(a_{n_s})=\alpha$.

Since we already proved that there exist admissible words $b_n\dots b_{n_0}$ and $c_{n_s}\dots c_{n+k}$ such that $\pi_1(b_{n_0})=\alpha$, $\pi_1(b_{n_s})=\alpha$,  $\pi_1(c_{n_s})=\beta$, $\pi_1(c_{n+k})=\beta$, by Lemma \ref{juncao} again it  there exists a word of type
$$b'_n\dots b_{n_0}a_{n_0+1}\dots a_{n_s-1} c_{n_s}\dots c'_{n+k}$$
with $\pi_1(b'_n)=\alpha$ and $\pi_1(c'_{n+k})=\beta$.
\cqd

The proof of the following lemma is simple:

\begin{lemma}
\label{mp}
There exists $C_{22}>0$ such that for all $n,m,$ and all  admissible words $a'_0\ldots a'_{n-k},$ $a''_0 \ldots a''_{n-k},$ $a_n\ldots a_{n+m}$
 $$e^{-C_{22}}\leq\dfrac{\ell(a_{n+m} \ldots a_n|a'_{n-k} \ldots a'_0)}{\ell(a_{n+m} \ldots a_n|a''_{n-k} \ldots a''_0)}\leq e^{C_{22}}.$$
\end{lemma}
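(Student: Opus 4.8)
The plan is to reduce Lemma~\ref{mp} to Lemma~\ref{matriz} by averaging over all admissible completions of the two truncated histories down to level $n-1$. Write $w'=a'_{n-k}\ldots a'_0$, $w''=a''_{n-k}\ldots a''_0$ and $u=a_{n+m}\ldots a_n$, and call a word $b=b_{n-1}\ldots b_{n-k+1}$ on the missing levels $n-k+1,\dots,n-1$ a completion of $w'$ compatible with $u$ if both $bw'$ and $u\,bw'$ are admissible. The first step is to observe that such completions exist, so that the cylinders in the statement are nonempty and the ratio is meaningful: given the admissible words $w'$ and $u$, Lemma~\ref{completar} provides an admissible word linking level $n-k$ to level $n$ with prescribed behaviour at its two ends, and two applications of the splicing Lemma~\ref{juncao} (first to $w'$, then to $u$) produce an admissible word $a'_0\ldots a'_{n-k}b_{n-k+1}\ldots b_{n-1}a_n\ldots a_{n+m}$; the same construction gives completions of $w''$ compatible with $u$. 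Let $\mathcal{G}'$, $\mathcal{G}''$ be the (nonempty) sets of such completions.

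Now condition on the symbols at the missing levels:
$$\ell(u\mid w')=\sum_{b\in\mathcal{G}'}\ell(b\mid w')\,\ell(u\mid bw'),\qquad \sum_{b\in\mathcal{G}'}\ell(b\mid w')\le 1.$$
For any $b\in\mathcal{G}'$ and $b''\in\mathcal{G}''$ the words $bw'$ and $b''w''$ are admissible words on the levels $0,\dots,n-1$ such that $u\,bw'$ and $u\,b''w''$ are admissible, so Lemma~\ref{matriz} gives $e^{-C_{21}}\le \ell(u\mid bw')/\ell(u\mid b''w'')\le e^{C_{21}}$; in particular all the numbers $\ell(u\mid bw')$ with $b$ ranging over $\mathcal{G}'\cup\mathcal{G}''$ lie within a factor $e^{2C_{21}}$ of one another. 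On the other hand, the bounded--geometry estimates --- Lemmas~\ref{strong}, \ref{domain}, \ref{image} and Proposition~\ref{corte} --- yield a constant $c_0=c_0(k)\in(0,1)$ with $\ell(a_i\mid a_{i-1}\ldots a_0)\ge c_0$ for every admissible word $a_i\ldots a_0$, since a cylinder $[a_{i-1}\ldots a_0]$ has at most two children and each of them has length comparable to that of the parent; hence for a fixed $b_*\in\mathcal{G}'$ one has $\ell(b_*\mid w')\ge c_0^{\,k-1}$ and therefore
$$c_0^{\,k-1}\,\ell(u\mid b_*w')\le \ell(u\mid w')\le e^{C_{21}}\,\ell(u\mid b_*w'),$$
with the analogous two--sided bound for $w''$ and some $b_*''\in\mathcal{G}''$. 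Dividing these, and using Lemma~\ref{matriz} once more to compare $\ell(u\mid b_*w')$ with $\ell(u\mid b_*''w'')$, we obtain $c_0^{\,k-1}e^{-2C_{21}}\le \ell(u\mid w')/\ell(u\mid w'')\le c_0^{-(k-1)}e^{2C_{21}}$, which is the claim with $C_{22}=2C_{21}+(k-1)\log(1/c_0)$.

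The conditioning identity and the final arithmetic are routine; the step that requires care is the existence of compatible completions, that is, that an admissible history ending at level $n-k$ can always be extended, across the $k-1$ intermediate levels, to reach the prescribed admissible block $u$ at level $n$. This is precisely where the $k$--bounded combinatorics is used: the chaining construction in the proof of Lemma~\ref{completar} connects the relevant letters within a window of $k$ levels, and Lemma~\ref{juncao} lets one splice the resulting words to $w'$ and to $u$ without disturbing either. (If one prefers, one may add to the hypotheses --- as in Lemma~\ref{matriz} --- that the relevant combined words are admissible, in which case this step is unnecessary and only the averaging argument remains.)
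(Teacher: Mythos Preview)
The paper gives no proof of this lemma, only the remark that it is ``simple''. Your conditioning argument --- writing $\ell(u\mid w')=\sum_{b}\ell(b\mid w')\,\ell(u\mid bw')$ over completions $b$ of the missing $k-1$ levels, using Lemma~\ref{matriz} to see that all the $\ell(u\mid bw')$ are mutually comparable within a factor $e^{C_{21}}$, and bounding $\ell(b_*\mid w')\ge c_0^{\,k-1}$ from the fact that each cylinder has at most two children of comparable size (Proposition~\ref{corte} and Lemma~\ref{strong}) --- is correct and is presumably what the authors have in mind.

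There is one small slip in the existence step. When you splice the word $c_{n-k}\ldots c_n$ from Lemma~\ref{completar} to $u=a_n\ldots a_{n+m}$ via Lemma~\ref{juncao}, the splice retains the level-$n$ symbol of the \emph{lower} word, so the admissible word you obtain is $a'_0\ldots a'_{n-k}c_{n-k+1}\ldots c_{n-1}c_n a_{n+1}\ldots a_{n+m}$, with $c_n$ in place of $a_n$. Lemma~\ref{completar} only prescribes $\pi_1(c_n)=\pi_1(a_n)$, not $\pi_2(c_n)$; if the $\chi$-components differ, the completion $b=c_{n-k+1}\ldots c_{n-1}$ need not lie in $\mathcal{G}'$. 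To force the correct $a_n$ one would have to connect $\pi_1(a'_{n-k})$ to the parent letter dictated by $a_n$ in only $k-1$ levels, which Lemma~\ref{completar} does not literally provide. Your parenthetical alternative --- add, as in Lemma~\ref{matriz}, the hypothesis that the concatenated words are admissible --- is the clean fix and is exactly how the lemma is used in Proposition~\ref{emc}, where the blocks come from actual orbits and admissibility is automatic.
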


\begin{pro}
\label{emc}There are  $C_{23}>0$ and $0<\la_4<1$ such that
$$| \ell(a_n|a_{n-r} \ldots a_0)- \ell(a_n)|\leq C_{23}\cdot \la_4^{\sqrt{r}},$$
where $r=[\frac{n}{2}].$
\end{pro}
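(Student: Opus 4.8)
The plan is to show that the conditional probability $\ell(a_n|a_{n-r}\ldots a_0)$ forgets its remote conditioning exponentially fast, and then to combine this with a telescoping/mixing argument over the intermediate scales. First I would fix the word $a_n$ (say $\pi_1(a_n)=\beta$) and compare $\ell(a_n|a_{n-r}\ldots a_0)$ with $\ell(a_n|b_{n-r}\ldots b_0)$ for an arbitrary admissible past $b_{n-r}\ldots b_0$. Writing $s=\lfloor\sqrt r\rfloor$ and splitting the past at level $n-r+s$, I would apply Lemma \ref{markov} to the ``near'' block $a_{n-1}\ldots a_{n-r+s}$ (the part of the conditioning word that is close to $a_n$ is common to both, so Lemma \ref{markov} gives a factor $e^{\pm C_{19}\la_3^{s}}$), together with Lemma \ref{mp} (or Lemma \ref{matriz}) to absorb the influence of whatever sits below level $n-r+s$ with a bounded — not yet small — constant. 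The point of choosing the split at depth $s\approx\sqrt r$ is to balance these two contributions.

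Next I would iterate this comparison across a cascade of blocks of length $\approx\sqrt r$: partition the interval of levels $[n-r,n]$ into $\approx\sqrt r$ consecutive blocks each of length $\approx\sqrt r$, and at each block use Lemma \ref{markov} to gain a factor $e^{\pm C_{19}\la_3^{\sqrt r}}$ from the previous block's overlap. Multiplying these $\approx\sqrt r$ factors gives a total multiplicative distortion bounded by $\exp(C\sqrt r\,\la_3^{\sqrt r})$, hence of the form $1+\O(\la_4^{\sqrt r})$ for a suitable $\la_4\in(\la_3,1)$ (since $\sqrt r\,\la_3^{\sqrt r}\to 0$ and is dominated by $\la_4^{\sqrt r}$). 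Since $\ell(a_n)$ is obtained by averaging $\ell(a_n|a_{n-r}\ldots a_0)$ over all admissible pasts with respect to Lebesgue measure, and the comparison above is uniform over the past, the same bound controls $|\ell(a_n|a_{n-r}\ldots a_0)-\ell(a_n)|$, after multiplying by the bounded quantity $\ell(a_n)\le 1$. (Lemma \ref{completar} and Lemma \ref{juncao} are used here to guarantee that the admissible pasts one needs in the averaging actually exist, so that the cylinders appearing in the ratios are non-empty.)

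I expect the main obstacle to be bookkeeping rather than conceptual: one must verify that the words produced by concatenating blocks remain admissible (this is exactly what Lemma \ref{juncao1} is for) and that the constants accumulated through $\approx\sqrt r$ applications of Lemma \ref{markov} really do form a convergent product whose logarithm is $\O(\sqrt r\,\la_3^{\sqrt r})$ — i.e. that the geometric gain $\la_3^{\sqrt r}$ at each step beats the number $\sqrt r$ of steps. The delicate quantitative point is the choice of the scale $\sqrt r$: a finer partition would give too many blocks, a coarser one too little decay per block, and $\sqrt r$ is what optimizes $\#\{\text{blocks}\}\times(\text{decay per block})$. Once $\la_4$ is chosen so that $\sqrt r\,\la_3^{\sqrt r}\le \const\cdot\la_4^{\sqrt r}$ for all $r$, the estimate follows with $C_{23}$ depending only on $C_{19}$, $C_{22}$ and $\la$.
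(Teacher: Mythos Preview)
Your block decomposition at scale $\sqrt r$ and the list of lemmas you invoke are the right ingredients, and they match the paper's setup. But there is a genuine gap in where you locate the exponential decay. Iterating Lemma~\ref{markov} across $\approx\sqrt r$ blocks produces a product of factors each of the form $e^{\pm C_{19}\la_3^{\sqrt r}}$, and this product is indeed $1+\O(\sqrt r\,\la_3^{\sqrt r})$ --- but that is a statement that a certain \emph{ratio is close to $1$}, not that anything is \emph{small}. Concretely, Lemma~\ref{markov} only lets you compare $\ell(a_n\mid\,\cdot\,)$ for two pasts that \emph{agree on the near block}; it gives you nothing when you compare $\ell(a_n\mid a_{n-r}\ldots a_0)$ with $\ell(a_n\mid b_{n-r}\ldots b_0)$ for two entirely different pasts, which is what you need in order to average and recover $\ell(a_n)$. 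In other words, Lemma~\ref{markov} encodes \emph{approximate Markovianity}, not \emph{mixing}, and no number of iterations of an ``error close to $1$'' will by itself force two genuinely different conditionals to coalesce.

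The missing mechanism is a contraction. In the paper's proof one first builds, from the block decomposition, an auxiliary measure $\ell'$ that is honestly Markov at the block scale; Lemma~\ref{markov} then shows $\ell'/\ell=\exp(\O(\sqrt r\,\la_3^{\sqrt r}))$, which is exactly your product-of-factors estimate and plays only the role of controlling the approximation. The actual decay $\la_4^{\sqrt r}$ comes from Lemma~\ref{mp}: the uniform two-sided bound $e^{\pm C_{22}}$ on ratios of block-transition probabilities (together with Lemmas~\ref{juncao} and~\ref{completar} for admissibility) says the transition operator of $\ell'$ maps the positive cone strictly into itself, hence is a uniform contraction in the Cayley--Hilbert (Birkhoff) projective metric. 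Each of the $\approx\sqrt r$ block-steps therefore contributes a fixed contraction factor $\mu<1$ (determined by $C_{22}$, not by $\la_3$), and it is the product $\mu^{\sqrt r}$ that makes the conditional forget its remote past. You do cite Lemma~\ref{mp}, but only as supplying ``a bounded --- not yet small --- constant''; that bounded constant is precisely what has to be fed into a Birkhoff/Doeblin argument to manufacture the smallness, and that step is absent from your plan.
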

\dem
Indeed Proposition \ref{emc} is a Markov ergodic theorem and it can be proved by the methods of the theory Markov chains, as in Khanin and Sinai \cite{newproof} (see also Sinai \cite{sinaibook}). Thus, we shall describe only the main steps. Fix an integer $m,$ $m\thicksim \sqrt{\frac{n}{2}}$ and introduce a new probability measure on the words of the form
$$
\begin{array}{cl}
\tilde{a}=&(a_n a_{n-1} \ldots a_{n-m+k}  \ a_{n-m} \ldots  a_{n-2m+k} \\
& a_{n-2m} \ldots  a_{n-3m+k} \ldots a_{n-(i-1)m} \ldots  a_{n-im+k} \  a_{n-im} \ldots a_0)
\end{array}
$$
by the formula
$$
\begin{array}{lll}
\ell'(\tilde{a})&=&\ell(a_0 \ldots a_{n-im})\ell(a_{n-(i-1)m} \ldots  a_{n-im+3}|a_{n-im} \ldots a_0)\\
& &\displaystyle\prod_{s=0}^{i-2}\ell(a_{n-sm} \ldots  a_{n-(s+1)m+k}|a_{n-(s+1)m} \ldots a_{n-(s+2)m+k}).
\end{array}
$$
Here  $i\thicksim \sqrt{\frac{n}{2}}.$  It follows easily from Lemma \ref{markov} that
\begin{equation} \label{asseq} \exp(-C_{19}\cdot\la_3^m\cdot i)\leq\dfrac{\ell'(\tilde{a})}{\ell(\tilde{a})}\leq\exp(C_{19}\cdot\la_3^m\cdot i).\end{equation}
The Lemma \ref{mp} shows that the Markov transition operator corresponding to $\ell'$ for the transition to $m$ steps is a contraction for the apropriate Cayley-Hilbert metric, and this contraction is {\it uniformly } smaller  than $1$ on each step. Then the usual Ergodic Theorem for Markov chains shows that the conditional probabilities $\ell'(a_n|a_{n-im} \ldots a_0)$ assymptotically do not depend on $a_{n-im} \ldots a_0$. Due (\ref{asseq}) the same holds for $\ell(a_n|a_{n-im} \ldots a_0)$. This gives the desired result.
\cqd

Denote by $\ell(\alpha, \star, n)$ the Lebesgue measure of the union $[(\alpha,0,n)]\cup [(\alpha,1,n)]$. Note that
$$\ell(\alpha, \star, n) =\frac{\sum_{i=1}^{q^n_\alpha} |f^i(I^n_\alpha)|}{|I|}.$$

\begin{proof}[Proof of Theorem \ref{principal}.] For simplify the notation we use $f_n$ to denote $R^n(f).$ Let $r=[\frac{n}{2}].$ We rewrite $\int_{I^n_{\al}}\frac{D^2f_n(s)}{Df_n(s)}ds$ in the following way. By the mean value theorem for integrals
\begin{eqnarray*}
\int_{I^n_{\al}}\dfrac{D^2f_n(s)}{Df_n(s)}ds&=& \sum_{\beta\in\mathcal{A}}\sum_{j=1}^{q^{\beta}_r}\,\sum_{f^i(I^n_{\al})\subset f^j(I^r_{\beta})}\int_{f^i(I^n_{\al})}\dfrac{D^2f(s)}{Df(s)}ds\\
 &= &\sum_{\beta\in\mathcal{A}}\sum_{j=1}^{q^{\beta}_r}\,\sum_{f^i(I^n_{\al})\subset f^j(I^r_{\beta})} \dfrac{D^2f(x_j^\alpha)}{Df(x_j^\alpha)}\cdot|f^i(I^n_{\al})| ,
\end{eqnarray*}
where $x_i^\alpha\in f^i(I^n_{\alpha})$. In a similar way we can choose   $y_j^\beta \in f^j(I^r_\beta) $ such that
\begin{eqnarray*}
\int_{I}\dfrac{D^2f(s)}{Df(s)}ds= \sum_{\beta\in\A}\sum_{j=1}^{q^{\beta}_r}\dfrac{D^2f(y_j^{\beta})}{Df(y_j^{\beta})}\cdot|f^j(I^r_{\beta})|.
\end{eqnarray*}
So
\begin{eqnarray*}
\int_{I^n_{\al}}\dfrac{D^2f_n(s)}{Df_n(s)}ds&=&\sum_{\beta\in\mathcal{A}}\sum_{j=1}^{q^{\beta}_r}\,\sum_{f^i(I^n_{\al})\subset f^j(I^r_{\beta})}\left(\dfrac{D^2f(x_j)}{Df(x_j)}-\dfrac{D^2f(y_j^{\beta})}{Df(y_j^{\beta})}\right)\cdot|f^i(I^n_{\al})| \\
& +& \sum_{\beta\in\mathcal{A}}\sum_{j=1}^{q^{\beta}_r}\,\sum_{f^i(I^n_{\al})\subset f^j(I^r_{\beta})} \dfrac{D^2f(y_j^{\beta})}{Df(y_j^{\beta})}\cdot|f^i(I^n_{\al})|,
\end{eqnarray*}

Due to the smooth properties of $f$ the first term is at most  $C_{24}\cdot\la_6^{\frac{n}{2}\nu},$ where $C_{24}=C_{24}(\la,k)>0$ and $0<\la_6=\la_6(\la,k)<1$.We will now analyze the second term.





\begin{eqnarray*}
\lefteqn{\sum_{\beta\in\mathcal{A}}\sum_{j=1}^{q^{\beta}_r}\,\sum_{f^i(I^n_{\al})\subset f^j(I^r_{\beta})} \dfrac{D^2f(y_j^{\beta})}{Df(y_j^{\beta})}\cdot|f^i(I^n_{\al})|=}\\
 & &= \sum_{\beta\in\mathcal{A}}\sum_{j=1}^{q^{\beta}_r}\frac{D^2f(y_j^{\beta})}{Df(y_j^{\beta})}\cdot |f^j(I^r_{\beta})|\dfrac{\sum_{f^i(I^n_{\al})\subset f^j(I^r_{\beta})} |f^i(I^n_{\al})|}{|f^j(I^r_{\beta})|}\\
 & & =  \sum_{\beta\in\mathcal{A}}\sum_{j=1}^{q^{\beta}_r}\dfrac{D^2f(y_j^{\beta})}{Df(y_j^{\beta})}\cdot |f^j(I^r_{\beta})| \cdot \left[\ell\Big((\al,\star,n)\,|\,[f^j(I^r_{\beta})]\Big)-\ell\Big((\alpha,\star,n)\Big)\right]\\
 & & +  \ell\Big((\alpha,\star,n)\Big)\cdot\sum_{\beta\in\mathcal{A}}\sum_{j=1}^{q^{\beta}_r}\dfrac{D^2f(y_j^{\beta})}{Df(y_j^{\beta})}\cdot |f^j(I^r_{\beta})|\\
 & & = (\mathrm{IV})+(\mathrm{V}).
\end{eqnarray*}
By Proposition \ref{emc} we have that $(\mathrm{IV})=\mathrm{O}(\la_4^{\sqrt{\frac{n}{2}}}).$ Now observe that $(\mathrm{V})$ is a Riemann sum for the integral  $\int_{I}\dfrac{D^2f(s)}{Df(s)}ds$. By
Proposition~\ref{pvz} and  $\nu$-Holder continuity of $\dfrac{D^2f}{Df}$ we have
$$(\mathrm{V}) = \ell\Big((\alpha,\star,n)\Big)\cdot \int_{I}\dfrac{D^2f(s)}{Df(s)}ds+ O(\lambda^{n/k}).$$   This finishes the proof.
\end{proof}

Before proving the Theorem \ref{teo2} we need the following lemma whose proof is left to the reader.

\begin{lemma}
\label{M}
Let $a,b\in\mathbb{R}.$ Then for every $C> 0$ there is $C_{25}>0$ such that if $|a|,|b| \leq C$ then
$$|M_{a}-M_{b}|_{C^2}\leq C_{25} \cdot |a-b|,$$
where $M_a$ and $M_b$  are defined in \eqref{defMoebius}.
\end{lemma}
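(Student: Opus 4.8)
\emph{Proof sketch.} The plan is to view $N\mapsto M_N$ as a smooth curve in $C^2([0,1],\mathbb{R})$ and apply the mean value inequality on the compact parameter interval $[-C,C]$. First I would check that the denominator in \eqref{defMoebius} never vanishes on $[0,1]$: for $x\in[0,1]$ and any $N\in\mathbb{R}$,
$$1+x(e^{-N/2}-1)=(1-x)\cdot 1+x\cdot e^{-N/2}$$
is a convex combination of $1$ and $e^{-N/2}$, hence strictly positive. Consequently $(N,x)\mapsto M_N(x)$ is of class $C^\infty$ on $\mathbb{R}\times[0,1]$; in particular $\partial_N M_N(x)$, $\partial_N DM_N(x)=\partial_x\partial_N M_N(x)$ and $\partial_N D^2M_N(x)=\partial_x^2\partial_N M_N(x)$ exist, are continuous on $\mathbb{R}\times[0,1]$, and the order of differentiation in $N$ and $x$ is immaterial.

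Next, since $[-C,C]\times[0,1]$ is compact, there is a constant $C_{25}=C_{25}(C)>0$ with
$$\sup_{|N|\leq C}\ \sup_{x\in[0,1]}\ \Big(|\partial_N M_N(x)|+|\partial_N DM_N(x)|+|\partial_N D^2M_N(x)|\Big)\leq C_{25}.$$
One may, if desired, compute these derivatives explicitly from \eqref{defMoebius} — each is a rational function of $x$ and $e^{-N/2}$ with the same non-vanishing denominator raised to a power — but the estimate only uses continuity on a compact set.

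Finally, fix $a,b$ with $|a|,|b|\leq C$, so the segment joining them lies in $[-C,C]$, and fix $x\in[0,1]$. Applying the one-variable mean value theorem to $N\mapsto M_N(x)$ gives some $\xi$ between $a$ and $b$ with $M_a(x)-M_b(x)=\partial_N M_\xi(x)\,(a-b)$, whence $|M_a(x)-M_b(x)|\leq C_{25}|a-b|$. The same argument applied to $N\mapsto DM_N(x)$ and to $N\mapsto D^2M_N(x)$ — legitimate because, by the joint smoothness above, these are $C^1$ in $N$ with $N$-derivatives equal to $\partial_N DM_N(x)$ and $\partial_N D^2M_N(x)$ — yields $|DM_a(x)-DM_b(x)|\leq C_{25}|a-b|$ and $|D^2M_a(x)-D^2M_b(x)|\leq C_{25}|a-b|$. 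Taking the supremum over $x\in[0,1]$ and summing the three bounds gives $|M_a-M_b|_{C^2}\leq 3C_{25}|a-b|$, and replacing $C_{25}$ by $3C_{25}$ proves the lemma. There is no genuine obstacle here; the only point needing (routine) attention is the non-vanishing of the denominator, which is exactly what makes the joint smoothness — and hence the interchange of $\partial_N$ with $\partial_x$ and the differentiation under the $C^2$-norm — valid. \qed
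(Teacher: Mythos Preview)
Your argument is correct: the joint smoothness of $(N,x)\mapsto M_N(x)$ on $\mathbb{R}\times[0,1]$ (guaranteed by the non-vanishing denominator you checked) together with compactness and the mean value theorem gives exactly the Lipschitz bound claimed. The paper leaves this proof to the reader, and your sketch is a complete and standard way to fill it in.
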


\begin{proof}[Proof of Theorem \ref{teo2}.] By assumption $\int_{I}\dfrac{D^2f(s)}{Df(s)}ds=0$, so  by Theorem \ref{principal} we have

$$\left|\int_{I^n_\al}\dfrac{D^2f_n(s)}{Df_n(s)}ds\right|\leq C_{26}\cdot \la_4^{\sqrt{\frac{n}{2}}}.$$
Therefore by Lemma \ref{M}

\begin{eqnarray}
\label{afim}
\left|M_{\int_{I^n_\al}\frac{D^2f_n(s)}{Df_n(s)}ds}-\mathrm{Id}\right|_{C^2}\leq C_{25}\cdot \left|\int_{I^n_\al}\dfrac{D^2f_n(s)}{Df_n(s)}ds-0 \right|
\end{eqnarray}
$$
\leq  C_{25}\cdot C_{26}\cdot \la_4^{\sqrt{\frac{n}{2}}}.
$$

Theorem \ref{teo1} together with \eqref{afim} gives us that
$$ \left|\mathcal{Z}_{I^n_{\al}}(R^n(f))-\mathrm{Id}\right|_{C^2}\leq C_{27}\cdot \la_4^{\sqrt{\frac{n}{2}}}.$$
\end{proof}

\bibliographystyle{elsarticle-num}



\end{document}